\pdfoutput=1
\documentclass[a4paper,11pt,oneside]{amsart}
\usepackage[paperheight=279mm,paperwidth=18cm,textheight=26cm,textwidth=14cm]{geometry}
\usepackage{amssymb,amsmath,amsthm}
\usepackage{esint}
\usepackage{mathtools}
\usepackage[utf8]{inputenc}
\usepackage[T1]{fontenc}
\usepackage[unicode,hidelinks,bookmarks]{hyperref}
\hypersetup{
colorlinks=true,
citecolor=[rgb]{0,0,0.5},
linkcolor=[rgb]{0,0,0.5},
urlcolor=[rgb]{0,0,0.75},
pdfpagemode=UseNone,
pdfstartview=FitH,
pdfdisplaydoctitle=true,
pdftitle={Kakeya-Brascamp-Lieb},
pdfauthor={Pavel Zorin-Kranich},
pdflang=en-US
}
\usepackage[style=alphabetic]{biblatex}
\addbibresource{pzorin.bib}

\numberwithin{equation}{section}
\newtheorem{theorem}{Theorem}
\numberwithin{theorem}{section}
\newtheorem{definition}[theorem]{Definition}
\newtheorem*{theorem*}{Theorem}
\newtheorem{proposition}[theorem]{Proposition}
\newtheorem{lemma}[theorem]{Lemma}
\newtheorem{corollary}[theorem]{Corollary}

\theoremstyle{remark}
\newtheorem{remark}[theorem]{Remark}

\DeclarePairedDelimiter\abs{\lvert}{\rvert}
\DeclarePairedDelimiter\norm{\lVert}{\rVert}
\DeclarePairedDelimiter\floor{\lfloor}{\rfloor}

\providecommand\given{\colon}
\newcommand\SetSymbol[1][]{%
\nonscript\:#1\vert
\allowbreak
\nonscript\:
\mathopen{}}
\DeclarePairedDelimiterX\Set[1]\{\}{%
\renewcommand\given{\SetSymbol[\delimsize]}
#1
}

\newcommand*{\N}{\mathbb{N}}
\newcommand*{\R}{\mathbb{R}}
\def\<{\left\langle}
\def\>{\right\rangle}

\DeclareMathOperator{\supp}{supp}
\DeclareMathOperator{\dist}{dist}
\newcommand{\Vis}{\mathrm{Vis}}
\newcommand{\vol}{\operatorname{vol}}
\newcommand{\B}{\mathbb{B}}
\newcommand{\Sphere}{\mathbb{S}}
\newcommand{\s}{\mathfrak{s}}
\newcommand{\Deg}{D}
\newcommand*{\Poly}[1][\Deg]{\mathcal{P}_{#1}}

\newcommand{\Fotimes}{\bar{\otimes}}
\newcommand*{\widevec}[1]{\overrightarrow{#1}}
\newcommand{\one}{\mathbf{1}}
\newcommand{\dif}{\mathop{}\!\mathrm{d}} 
\newcommand{\BL}{\mathrm{BL}}
\newcommand{\calQ}{\mathcal{Q}}
\newcommand{\calT}{\mathcal{T}}
\newcommand{\codim}{\operatorname{codim}}

\begin{document}
\allowdisplaybreaks[2]
\title{Kakeya--Brascamp--Lieb inequalities}
\author{Pavel Zorin-Kranich}
\address{University of Bonn\\
  Mathematical Institute\\
  Bonn\\
  Germany
}
\begin{abstract}
We prove a sharp common generalization of endpoint multilinear Kakeya and local discrete Brascamp--Lieb inequalities.
\end{abstract}
\subjclass[2010]{26D15 (Primary) 42B99, 52C07 (Secondary)}
\maketitle

\section{Introduction}

\subsection{Brascamp--Lieb inequalities}
\emph{Brascamp--Lieb (BL) inequalities} are a family of Lebesgue space estimates for multilinear forms that generalize H\"older's inequality, Young's convolutions inequality, and the Loomis--Whitney inequality.
Brascamp and Lieb proved that these inequalities are extremized by Gaussians \cite{MR0412366,MR1069246}, while Bennett, Carbery, Christ, and Tao gave a combinatorial characterization of the geometric situations in which these inequalities hold \cite{MR2377493,MR2661170}.
The latter characterization was recently extended to certain regularized Brascamp--Lieb inequalities, of a type relevant to Fourier decoupling theory, by Maldague \cite{arxiv:1904.06450}.

\begin{definition}
\label{def:BL}
We say that a function on a Euclidean space is \emph{constant at scale $r$} if the space can be partitioned in cubes with side length $r$, on each of which the function is constant.

Let $\vec{T} = (T_{1},\dotsc,T_{m})$ be a tuple of linear subspaces of $\R^{n}$, and let $\vec{p} = (p_{1},\dotsc,p_{m})$ be a tuple of positive real numbers.
For $0<r<R<\infty$ the \emph{Brascamp--Lieb constant} $\BL(\vec{T},\vec{p},(r,R))$ is the smallest constant $A$ such that the inequality
\begin{equation}
\label{eq:BL}
\int_{B(0,R)} \prod_{j=1}^{m} (f_{j}(x+T_{j}))^{p_{j}} \dif x
\leq
A \prod_{j=1}^{m} \bigl( \int_{\R^{n}/T_{j}} f_{j} \bigr)^{p_{j}}
\end{equation}
holds for every tuple of integrable functions $f_{j}$ on $\R^{n}/T_{j}$ that are constant at scale $r$.
\end{definition}

More general truncations and regularity conditions on the functions $f_{j}$ were studied in \cite[\textsection 8]{MR2377493}.
It is known from \cite[Lemma 8.14 and Corollary 8.15]{MR2377493} that the analogue of the inequality \eqref{eq:BL} with Gaussian cutoffs is extremized by Gaussian inputs.
The following result characterizes the growth rate of Brascamp--Lieb constants.

\begin{theorem}[{\cite{arxiv:1904.06450}}]
\label{thm:Maldague}
For every tuple of linear subspaces $\vec{T}$ and exponents $\vec{p}$ as in Definition~\ref{def:BL} there exist constants $0<c<C<\infty$ such that for every $0<r<R<\infty$ we have
\begin{equation}
\label{eq:Maldague}
c R^{\kappa}r^{\tilde{\kappa}} \leq \BL(\vec{T},\vec{p},(r,R)) \leq C R^{\kappa}r^{\tilde{\kappa}},
\end{equation}
where
\begin{align}
\label{eq:disBL-exponent}
\kappa &:=
\sup_{V \leq \R^{n}} \dim V - \sum_{j=1}^{m} p_{j} \dim (V/T_{j}),
\\
\label{eq:locBL-exponent}
\tilde{\kappa}
&:=
\bigl(n-\sum_{j=1}^{m}p_{j}n_{j}\bigr) - \kappa
=
\inf_{V \leq \R^{n}} \codim V - \sum_{j=1}^{m} p_{j} \codim (V/T_{j}).
\end{align}
Here $n_{j} := n - \dim T_{j}$, and the supremum and infimum are taken over all linear subspaces of $\R^{n}$.
\end{theorem}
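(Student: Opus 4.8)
The plan is to prove matching lower and upper bounds. Throughout set $N := n - \sum_{j} p_{j} n_{j}$ (so $\kappa + \tilde\kappa = N$ by \eqref{eq:locBL-exponent}), write $\pi_{j} \colon \R^{n} \to \R^{n}/T_{j}$ for the quotient maps, and abbreviate $a(V) := \dim V - \sum_{j} p_{j} \dim(V/T_{j})$, so that $\kappa = \sup_{V} a(V)$, with $a(\{0\}) = 0$ and $a(\R^{n}) = N$; this supremum is attained because $a$ takes only finitely many values. A first reduction: the dilation $x \mapsto x/r$ carries functions constant at scale $r$ to functions constant at scale $1$ and $B(0,R)$ to $B(0,R/r)$, and tracking Jacobians in \eqref{eq:BL} gives the exact identity $\BL(\vec T, \vec p, (r,R)) = r^{N}\, \BL(\vec T, \vec p, (1, R/r))$. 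Since $r^{N} = r^{\kappa + \tilde\kappa}$, it therefore suffices to prove $\BL(\vec T, \vec p, (1,S)) \sim S^{\kappa}$ for all $S \ge 1$.

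For the lower bound, fix a subspace $V$ with $a(V) = \kappa$. In linear coordinates adapted to $V$, let $Q \subseteq B(0,R)$ be a box of side $\sim R$ in the directions spanning $V$ and side $\sim r$ in the complementary directions, and put $f_{j} := \one_{E_{j}}$, where $E_{j}$ is the union of the $r$-cubes that meet $\pi_{j}(Q)$. Then $f_{j}$ is constant at scale $r$, the left-hand side of \eqref{eq:BL} is at least $\vol(Q) \sim R^{\dim V} r^{\codim V}$, and $\int_{\R^{n}/T_{j}} f_{j} \sim R^{\dim(V/T_{j})} r^{\codim(V/T_{j})}$. This yields $\BL(\vec T, \vec p, (r,R)) \gtrsim R^{a(V)} r^{\,\codim V - \sum_{j} p_{j} \codim(V/T_{j})} = R^{\kappa} r^{\tilde\kappa}$, using the identity $\codim V - \sum_{j} p_{j} \codim(V/T_{j}) = N - a(V)$.

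The upper bound $\BL(\vec T, \vec p, (1,S)) \lesssim S^{\kappa}$ I would establish by induction on $n$, the case $n = 0$ being trivial. Fix $V$ with $a(V) = \kappa$. If $0 < \dim V < n$, factorize along a complement of $V$ in the spirit of Bennett--Carbery--Christ--Tao: integrating first over the slices parallel to $V$, then over $\R^{n}/V$, and using Fubini for the surjections $\R^{n}/T_{j} \twoheadrightarrow \R^{n}/(T_{j}+V)$, one obtains
\begin{equation*}
\BL(\vec T, \vec p, (1,S)) \lesssim \BL\bigl((T_{j} \cap V)_{j}, \vec p, (1,S)\bigr) \cdot \BL\bigl(((T_{j}+V)/V)_{j}, \vec p, (1,S)\bigr),
\end{equation*}
the first constant formed inside $V$, the second inside $\R^{n}/V$. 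These two data are governed by the same exponent formula: for $W \le V$ one has $\dim(W/(T_{j}\cap V)) = \dim(W/T_{j})$, so the datum on $V$ has exponent $\kappa_{V} = \sup_{W \le V} a(W) \le \kappa$; and for $W \supseteq V$ one computes $a_{\R^{n}/V}(W/V) = a(W) - a(V)$, so the datum on $\R^{n}/V$ has exponent $\kappa_{\R^{n}/V} = \sup_{W \supseteq V} a(W) - \kappa \le 0$, hence $\kappa_{\R^{n}/V} = 0$. Both $V$ and $\R^{n}/V$ have dimension $< n$, so by induction the factors are $\lesssim S^{\kappa_{V}} \le S^{\kappa}$ and $\lesssim S^{\kappa_{\R^{n}/V}} = 1$, and we are done. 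It remains to treat the case in which every extremizer of $a$ is $\{0\}$ or $\R^{n}$. If $\{0\}$ is an extremizer, then $\kappa = 0$, i.e. the dimension condition $\dim V \le \sum_{j} p_{j} \dim(V/T_{j})$ holds for all $V$; since $\pi_{j}(B(0,S))$ lies in a ball of radius $O(S)$, we may truncate the supports of the $f_{j}$ to such balls, and the inequality then follows from the continuous Brascamp--Lieb inequality with truncated inputs, whose constant is finite under the dimension condition (\cite{MR2377493}) and, by homogeneity, is $\lesssim S^{N} \le 1$; thus $\BL(\vec T, \vec p, (1,S)) \lesssim 1 = S^{0}$. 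If instead $\R^{n}$ is the only extremizer, then $\kappa = N > 0$, and the estimate $\BL(\vec T, \vec p, (1,S)) \lesssim S^{N}$ amounts to a sharp bound for the volume, inside $B(0,S)$, of an intersection $\bigcap_{j} \pi_{j}^{-1}(E_{j})$ of unions of unit-thickness slabs in the directions $T_{j}$, weighted by the coefficients of the $f_{j}$ — that is, to the endpoint multilinear Kakeya / Kakeya--Brascamp--Lieb inequality — which I would prove by Guth's polynomial-partitioning method.

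The main obstacle is twofold. The first is to make the factorization rigorous in this discrete, two-scale setting: the classical choice of ``good complements'' must be refined so that the scale-$1$ regularity of the $f_{j}$ and the truncation to $B(0,S)$ descend to the $V$-slices and to $\R^{n}/V$, which forces one to let the regularity scale degrade from $1$ to $O(1)$ and to verify that the constants so produced are absolute, uniformly in $S$; note that a soft induction on scales by itself gives only $\BL(\vec T, \vec p, (1,S)) \lesssim \BL(\vec T, \vec p, (1,\sqrt S))^{2}$ and hence a non-sharp polynomial exponent, so it is the exact bookkeeping of exponents in the factorization that produces the sharp power $S^{\kappa}$. The second is that the base case $\kappa = N$ is precisely the endpoint Kakeya--Brascamp--Lieb inequality; a cleaner organization of the whole proof is therefore to establish that inequality first — which is the purpose of this paper — and then to deduce Theorem~\ref{thm:Maldague} from it using the factorization and scaling identities above.
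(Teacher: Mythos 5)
The paper does not prove Theorem~\ref{thm:Maldague}: it is an attribution to \cite{arxiv:1904.06450}, and the only content the paper adds is the scaling identity \eqref{eq:BL-scaling} reducing the general $(r,R)$ case to $r=1$. Your proposal correctly reproduces that reduction, and your lower-bound construction and the exponent bookkeeping in the factorization step ($a_{V}(W)=a(W)$ for $W\leq V$, $a_{\R^{n}/V}(W/V)=a(W)-a(V)$ for $W\supseteq V$, giving $\kappa_{V}=\kappa$ and $\kappa_{\R^{n}/V}=0$) are sound. But there is a genuine error in the second terminal case.

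When $\R^{n}$ is the unique extremizer, so $\kappa=N>0$ and hence $\tilde{\kappa}=0$, the desired bound $\BL(\vec{T},\vec{p},(1,S))\lesssim S^{N}$ is \emph{not} a Kakeya--Brascamp--Lieb inequality and does not require polynomial partitioning. The tuple $\vec{T}$ is fixed here --- there are no varying directions, which is the defining feature of Kakeya-type problems --- so you are looking at an ordinary two-scale Brascamp--Lieb estimate. The case is exactly dual to the $\kappa=0$ base case you already dispatch by citing the discrete BL characterization: $\tilde{\kappa}=0$ is precisely the local condition \eqref{eq:BCCT-local-cond} (note $N-a(V)=\codim V-\sum_{j}p_{j}\codim(V/T_{j})$, so $a(V)\leq N$ for all $V$ is equivalent to \eqref{eq:BCCT-local-cond}), whence $\BL(\vec{T},\vec{p},(1/S,1))\leq\BL_{\mathrm{loc}}(\vec{T},\vec{p})<\infty$ by \cite[Theorem 8.17]{MR2377493} or \cite[Theorem 2.2]{MR2661170}; the scaling identity \eqref{eq:BL-scaling} then gives $\BL(\vec{T},\vec{p},(1,S))=S^{N}\,\BL(\vec{T},\vec{p},(1/S,1))\lesssim S^{N}=S^{\kappa}$. (Minor point: your $\kappa=0$ case should cite the discrete characterization \cite[Theorem 2.5]{MR2661170}, not the local/truncated statement in \cite{MR2377493}; with $r=1$ the relevant condition is \eqref{eq:BCCT-discrete-cond}.) Consequently your concluding suggestion --- that the ``clean'' organization is to prove the endpoint Kakeya inequality first and deduce Theorem~\ref{thm:Maldague} from it --- rests on a false premise: neither base case touches Kakeya, and indeed the present paper uses Theorem~\ref{thm:Maldague} only as background context, not as an ingredient. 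The remaining concern you flag yourself --- that the Bennett--Carbery--Christ--Tao factorization through an extremizing $V$ must be carried out with uniform constants for truncated, discretely regular inputs --- is the real technical content of Maldague's argument and would need to be executed in full to complete the proof.
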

Theorem~\ref{thm:Maldague} is stated in \cite{arxiv:1904.06450} for $r=1$.
This case implies the estimate \eqref{eq:Maldague} for general $r$, since by scaling we have
\begin{equation}
\label{eq:BL-scaling}
\BL( \vec{T}, \vec{p}, (r,R)) = r^{n-\sum_{j=1}^{m} p_{j} n_{j}} \BL( \vec{T}, \vec{p}, (1,R/r)).
\end{equation}
We briefly recall several special cases of Theorem~\ref{thm:Maldague} known prior to \cite{arxiv:1904.06450}.

\subsubsection{Discrete BL inequalities}
The exponent \eqref{eq:disBL-exponent} is always $\geq 0$, as can be seen by inserting $V=\Set{0}$.
The exponent \eqref{eq:disBL-exponent} vanishes if and only if for every subspace $V \leq \R^{n}$ we have
\begin{equation}
\label{eq:BCCT-discrete-cond}
\dim V \leq \sum_{j=1}^{m} p_{j} \dim (V/T_{j}).
\end{equation}
Hence the \emph{discrete BL constant} $\BL_{\mathrm{dis}}(\vec{T},\vec{p}) := \lim_{R\to \infty} \BL(\vec{T},\vec{p},(1,R))$ is finite if and only if \eqref{eq:BCCT-discrete-cond} holds.
This is the characterization of discrete BL inequalities originally proved in \cite[Theorem 2.5]{MR2661170}.

\subsubsection{Local BL inequalities}
\label{sec:local-BL}
The exponent \eqref{eq:locBL-exponent} is always $\leq 0$, as can be seen by inserting $V = \R^{n}$.
The exponent \eqref{eq:locBL-exponent} vanishes if and only if for every linear subspace $V \leq \R^{n}$ we have
\begin{equation}
\label{eq:BCCT-local-cond}
\codim V \geq \sum_{j=1}^{m} p_{j} \codim (V/T_{j}).
\end{equation}
Hence the \emph{local BL constant} $\BL_{\mathrm{loc}}(\vec{T},\vec{p}) := \lim_{r\to 0} \BL(\vec{T},\vec{p},(r,1))$ is finite if and only if \eqref{eq:BCCT-local-cond} holds.
This is the characterization of local BL inequalities originally proved in \cite[Theorem 8.17]{MR2377493} and \cite[Theorem 2.2]{MR2661170}.

\subsubsection{Scale-invariant BL inequalities}
\label{sec:scale-invariant-BL}
The \emph{scale-invariant BL constant}
\begin{equation}
\label{eq:scale-invariant-BL}
\BL(\vec{T},\vec{p}) := \lim_{r\to 0, R\to \infty} \BL(\vec{T},\vec{p},(r,R))
\end{equation}
is finite if and only if both exponents \eqref{eq:disBL-exponent} and \eqref{eq:locBL-exponent} vanish, which happens if and only if the conditions \eqref{eq:BCCT-discrete-cond} and \eqref{eq:BCCT-local-cond} hold.
The latter two conditions imply the scaling condition
\begin{equation}
\label{eq:BCCT-scaling-cond}
n = \sum_{j=1}^{m} p_{j} n_{j}.
\end{equation}
In fact, any two of the conditions \eqref{eq:BCCT-discrete-cond}, \eqref{eq:BCCT-local-cond}, and \eqref{eq:BCCT-scaling-cond} imply the third.
The fact that these conditions characterize the finiteness of the scale-invariant BL constant was proved in \cite[Theorem 1.15]{MR2377493}.

\subsubsection{Loomis--Whitney inequality}
\label{sec:LW}
One of the earliest and most important (scale-invariant) BL inequalities is the Loomis--Whitney inequality \cite{MR0031538}, which relies only on the product structure of $\R^{n}$ and not on the geometry of $\R$.
In its affine invariant form, it states that for $m=n$, $p_{1}=\dotsb=p_{m}=1/(m-1)$, and $k_{1}=\dotsb=k_{m}=1$, where $k_{j} := \dim T_{j}$, we have
\begin{equation}
\label{eq:LW}
\BL( \vec{T}, \vec{p} ) = \abs[\Big]{\bigwedge_{j=1}^{m} T_{j}}^{-1/(m-1)}.
\end{equation}
Here we identify each linear subspace $T_{j} \leq \R^{n}$ with the associated normalized volume form (the arbitrary choice of orientation does not affect the right hand-side of \eqref{eq:LW}).
It is easy to see that \eqref{eq:LW} continues to hold in the case $2 \leq m \leq n$, $p_{1}=\dotsb=p_{m}=1/(m-1)$, $k_{1}+\dotsb+k_{m}=n$; the corresponding multilinear forms are sometimes called \emph{$k$-plane transforms}.
More general Brascamp--Lieb inequalities that rely only on product structure were established by Finner \cite{MR1188047}.

\subsection{Multilinear Kakeya inequalities}
Kakeya--Brascamp--Lieb (KBL) inequalities are generalizations of Brascamp--Lieb inequalities \eqref{eq:BL} in which functions $f_{j}$, constant in directions of the subspaces $T_{j}$, are replaced by sums of functions constant in varying directions.
The first KBL inequality was the multilinear Kakeya inequality proved by Bennett, Carbery, and Tao \cite{MR2275834}.
Their work was motivated by applications to the Fourier restriction problem and generalized the Loomis--Whitney inequality (see Section~\ref{sec:LW}).
A simplified proof of a slightly weaker result, which is still adequate for all known applications, was given by Guth \cite{MR3300318}.

The multilinear Kakeya inequality of Bennett, Carbery, and Tao plays a central role in the proof of the $\ell^{2}$ decoupling theorem for the paraboloid by Bourgain and Demeter \cite{MR3374964} (see also \cite{MR3592159} for a streamlined exposition, in which the multilinear Kakeya inequality is applied directly, without passing through the multilinear restriction theorem).
More general KBL inequalities were first proved in \cite[Theorem 1.2]{MR3783217} and applied in the proof of the decoupling theorem for the moment curve by Bourgain, Demeter, and Guth \cite{MR3548534}.
We refer to \cite{arxiv:1811.02207} and references therein for the current state of development in this direction.

The proofs of the multilinear Kakeya inequality in \cite{MR2275834,MR3300318} proceed by induction on scales and do not yield the optimal endpoint result.
The endpoint version of the multilinear Kakeya inequality was established by Guth \cite{MR2746348} using a version of the polynomial method.
A more precise result that involves the transversality parameters \eqref{eq:LW} was obtained in \cite[Theorem 6]{MR2860188}.
Guth's proof was later simplified by Carbery and Valdimarsson \cite{MR3019726}.
Most recently, Ruixiang Zhang \cite{MR3738255} obtained endpoint KBL inequalities that extend all scale-invariant BL inequalities (see Section~\ref{sec:scale-invariant-BL}).

\subsection{Main result}
Our main result below tells that local discrete Brascamp--Lieb inequalities in Definition~\ref{def:BL} can be extended to Kakeya--Brascamp--Lieb inequalities with a loss in the constant that depends only on the dimension of the integration domain and the total power in the inequality.
\begin{theorem}
\label{thm:main:uniform}
Let $n\in\N = \Set{1,2,\dotsc}$, $m\geq 2$, $k_{1},\dotsc,k_{m} \in \N$, and $0<p_{1},\dotsc,p_{m}<\infty$ with
\begin{equation}
\label{eq:P}
P := \sum_{j=1}^{m} p_{j} \geq 1.
\end{equation}
For each $1\leq j\leq m$ let $\calT_{j}$ be a family of $k_{j}$-dimensional affine subspaces of $\R^{n}$.
Let $R>1$ and assume that for some $A < \infty$ and every tuple $\vec{T} = (T_{1},\dotsc,T_{m})$ with $T_{j} \in \calT_{j}$ we have
\begin{equation}
\label{eq:uniform-BL-bound}
\BL( \vec{T}, \vec{p}, (1,R) ) \leq A.
\end{equation}
Then for any functions $f_{j,T_{j}}$ constant at scale $1$ we have
\begin{equation}
\label{eq:main:uniform}
\int_{B(0,R)} \prod_{j=1}^{m} \Bigl( \sum_{T_{j} \in \calT_{j}} f_{j,T_{j}}(x + T_{j}) \Bigr)^{p_{j}} \dif x
\leq C^{P} A
\prod_{j=1}^{m} \Bigl( \sum_{T_{j} \in \calT_{j}} \int_{\R^{n}/T_{j}} f_{j,T_{j}} \Bigr)^{p_{j}}.
\end{equation}
\end{theorem}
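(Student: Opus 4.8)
The strategy is a polynomial‑method argument in the style of Guth's proof of the endpoint multilinear Kakeya inequality \cite{MR2746348,MR3019726} and of R.~Zhang's extension of it to Brascamp--Lieb data \cite{MR3738255}; the new feature is that the hypothesis \eqref{eq:uniform-BL-bound} plays the role that the Loomis--Whitney and scale‑invariant Brascamp--Lieb inequalities play there, and that the growth in $R$ is tracked through the induction by means of Theorem~\ref{thm:Maldague}. The argument will be an induction on the ambient dimension $n$, with a secondary induction inside a fixed dimension.

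I would begin with the standard reductions. We may assume each $f_{j,T_{j}}\ge 0$ and, by monotone convergence, that every family $\calT_{j}$ is finite. Since $f_{j,T_{j}}$ is constant at scale $1$, it is a nonnegative combination of indicators of \emph{slabs}, i.e.\ preimages in $\R^{n}$ of unit cubes of $\R^{n}/T_{j}$; hence each $g_{j} := \sum_{T_{j}\in\calT_{j}} f_{j,T_{j}}(\,\cdot\, + T_{j})$ becomes a nonnegative weighted sum of slab indicators, and the right‑hand side of \eqref{eq:main:uniform} becomes $\prod_{j} M_{j}^{p_{j}}$ with $M_{j}$ the total weight of the $j$‑th family. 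Normalising $M_{j}=1$, the goal is $\int_{B(0,R)}\prod_{j} g_{j}^{p_{j}}\le C^{P} A$. By the scaling identity \eqref{eq:BL-scaling}, monotonicity of the Brascamp--Lieb constant in the large scale, and Theorem~\ref{thm:Maldague}, the hypothesis \eqref{eq:uniform-BL-bound} also controls $\BL(\vec T,\vec p,(1,\rho))$ for every $1\le\rho\le R$ and every relevant tuple $\vec T$, and this bound is smaller at finer scales. This improvement, together with the reduction in the number of slabs inside a single cell, is the resource that will drive the secondary induction.

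The engine is a polynomial partitioning of the ball. One chooses a polynomial $Q$ of a degree $D$ --- to be optimised --- so that $B(0,R)\setminus Z(Q)$ breaks into $O(D^{n})$ cells $\Omega_{i}$ among which the weighted‑slab data is distributed favourably, and decomposes
\begin{equation*}
\int_{B(0,R)}\prod_{j} g_{j}^{p_{j}}
\;\lesssim\;
\sum_{i} \int_{\Omega_{i}}\prod_{j} g_{j}^{p_{j}}
\;+\;
\int_{W} \prod_{j} g_{j}^{p_{j}},
\qquad W := \Set{x \given \dist(x,Z(Q))\le 1}.
\end{equation*}
On each cell one again has a Kakeya--Brascamp--Lieb configuration, of smaller complexity and with a single‑tuple constant no larger than the one supplied by \eqref{eq:uniform-BL-bound}, so that the secondary induction applies there; writing $\nu_{i,j}$ for the weight of the $j$‑th family that meets $\Omega_{i}$ and summing the resulting cell bounds, H\"older with the exponents $P/p_{j}$ yields
\begin{equation*}
\sum_{i} \prod_{j} \nu_{i,j}^{p_{j}}
\;\le\;
\prod_{j} \Bigl(\sum_{i} \nu_{i,j}^{P}\Bigr)^{p_{j}/P}
\;\le\;
\prod_{j} \Bigl(\sum_{i} \nu_{i,j}\Bigr)^{p_{j}}
\;\lesssim\;
\prod_{j} M_{j}^{p_{j}},
\end{equation*}
where the hypothesis $P\ge 1$ enters both through the power‑mean step $\sum_{i} \nu_{i,j}^{P}\le(\sum_{i}\nu_{i,j})^{P}$ and through the admissibility of the exponents $P/p_{j}\ge 1$; the number of cells has dropped out, so the cell contribution costs only a dimensional constant raised to the power $P$. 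Arranging that the slabs have bounded overlap among the cells, so that $\sum_{i}\nu_{i,j}\lesssim M_{j}$, is part of what the choice of $Q$ must provide.

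The term over $W$ is where I expect the real work. One parametrises $Z(Q)$ and reduces $\int_{W}\prod_{j} g_{j}^{p_{j}}$ to a Kakeya--Brascamp--Lieb‑type estimate of dimension $n-1$ on the variety, to which the outer induction applies. Two points have to be handled. First, one separates the slabs crossing $Z(Q)$ transversally from those essentially tangent to it: the former are controlled by Bezout‑type degree bounds (a $k_{j}$‑plane meets $Z(Q)$ in a set of degree $D$), while the latter contribute the genuinely $(n-1)$‑dimensional datum. Second, one must transfer the Brascamp--Lieb hypothesis to the variety --- the subspaces $T_{j}$ get pushed to the generic tangent spaces $T_{z}Z(Q)$ --- and verify that the \emph{uniformity} in \eqref{eq:uniform-BL-bound} over all $\vec T\in\prod_{j}\calT_{j}$ produces a correspondingly uniform Brascamp--Lieb bound for the induced datum, with at most a $C^{P}$‑type loss; this uniformity is essential because the tangent plane of $Z(Q)$ may select different members of the families at different points. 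Finally, the degree $D$ must be balanced so that $W$ carries only a small fraction of the total and so that the accumulated loss over the recursion stays bounded by $C^{P}$ independently of $R$ --- rather than growing with the number of scales, as a naive decomposition into smaller balls would force --- and getting this bookkeeping exactly right is the crux of the proof.
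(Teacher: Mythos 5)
Your approach is genuinely different from the paper's. The paper performs no polynomial partitioning and no induction on scale or dimension. Instead it passes to a stronger statement (Theorem~\ref{thm:main:aff-subspaces}, in turn reduced to Theorem~\ref{thm:main} for varieties) and then uses a multilinear-duality reformulation (Proposition~\ref{prop:multilinear-duality}): given a weight $M$ on unit cubes summing to $1$, one must produce functions $S_{j}$ satisfying a pointwise factorization estimate and a global sum bound. The $S_{j}$ are built from a single probability measure $\sigma$ on polynomials of degree $\lesssim R$ with prescribed \emph{visibility} in every unit cube, constructed via the Borsuk--Ulam theorem (Theorem~\ref{thm:vis:scaled}). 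The factorization bound is then verified by wedge and convexity estimates together with the Brascamp--Lieb hypothesis, while the sum bound comes from B\'ezout's theorem. There is one global choice of polynomial (measure), not a recursive cell decomposition.

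More importantly, your proposal has a concrete gap at its central accounting step. You assert one can ``arrange that the slabs have bounded overlap among the cells, so that $\sum_{i}\nu_{i,j}\lesssim M_{j}$.'' This cannot be arranged: a $k_{j}$-dimensional slab meets up to $\sim D^{k_{j}}$ of the $O(D^{n})$ cells cut out by a degree-$D$ polynomial (already a line meets $\sim D$ cells, by Milnor--Thom applied to the restriction of the partitioning polynomial to that line), so $\sum_{i}\nu_{i,j}$ can be as large as $D^{k_{j}}M_{j}$. Consequently the H\"older step does not make the cell count ``drop out''; it leaves behind a factor growing in $D$, which then compounds when you iterate to exhaust $B(0,R)$. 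This is exactly why polynomial-partitioning (or induction-on-scales) proofs of multilinear Kakeya lose a $\log R$ or $R^{\epsilon}$ factor, and why the endpoint has only ever been reached through the visibility/Borsuk--Ulam route of Guth, Carbery--Valdimarsson, and Zhang that the paper follows. Without a genuinely new mechanism to cancel the crossing multiplicity of slabs against cells, your recursion cannot close with an $R$-independent constant $C^{P}A$.
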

Here and later $C$ denotes a constant that depends only on the dimension $n$, but may change between uses.
We write $A\lesssim B$ if $A\leq CB$.

The restriction $P\geq 1$ in Theorem~\ref{thm:main:uniform} is harmless, since BL inequalities are only nontrivial in this regime.

In the Loomis--Whitney case, Theorem~\ref{thm:main:uniform} reduces to \cite[Theorem 1.3]{MR2746348}.
In the scale-invariant case, Theorem~\ref{thm:main:uniform} refines \cite[Theorem 1.11]{MR3738255}, where a linear dependence of the estimate \eqref{eq:main:uniform} on the bound in \eqref{eq:uniform-BL-bound} was obtained for \emph{rational} exponents $p_{j}$ (this is not explicit in the statement of \cite[Theorem 1.11]{MR3738255}, but is clear from the statement of \cite[Theorem 8.1]{MR3738255} from which it is deduced).
Theorem~\ref{thm:main:uniform} is potentially useful in Fourier decoupling theory, see the discussion of the so-called ``ball inflation'' in \cite{arxiv:1811.02207}.

In the Loomis--Whitney case, the uniform bound \eqref{eq:uniform-BL-bound} can be obtained using the explicit description of the BL constants in \eqref{eq:LW}.
In the more general situation of Definition~\ref{def:BL}, it is known that, for a fixed tuple of exponents $\vec{p}$, the upper bound in \eqref{eq:Maldague} is locally uniform in $\vec{T}$.
This was proved in \cite[Theorem 2.1]{arxiv:1508.07502} for local BL constants and in \cite{arxiv:1904.06450} in the situation of Definition~\ref{def:BL}.

That local uniformity was made more precise in \cite{MR3723636}, where it is proved that, for fixed exponents $\vec{p}$, the scale-invariant BL constant $\BL(\vec{T}, \vec{p})$ depends continuously on the tuple of subspaces $\vec{T}$.
In \cite{MR3777414} this continuity result was further refined at rational points.

\subsection{A more precise KBL inequality}
Although Theorem~\ref{thm:main:uniform} is adequate for applications to decoupling, the uniform bound \eqref{eq:uniform-BL-bound} is not a natural hypothesis.
In the Loomis--Whitney setting more precise results, which remain meaningful in absence of such a uniform bound, were proved in \cite[Theorem 6]{MR2860188} and \cite[Theorem 1]{MR3019726}.
These results were extended to the $k$-plane transform case in \cite[Theorem 1.5]{MR3738255}.
In order to extend these results to general BL data, we will use the notion of Fremlin projective tensor product.

The Fremlin projective tensor product norm is the largest lattice norm on the algebraic tensor product of Banach lattices for which the inequality
\[
\norm{F_{1} \otimes \dotsm \otimes F_{m}} \leq \norm{F_{1}} \dotsm \norm{F_{m}}
\]
holds.
We will only need a special case of this notion, in which it has a simple explicit description.
\begin{definition}
\label{def:otimes}
Let $X_{1},\dotsc,X_{m}$ be measure spaces and $1<q_{1},\dotsc,q_{m}<\infty$ exponents with
\[
\sum_{j=1}^{m} \frac{1}{q_{j}} = 1.
\]
The \emph{Fremlin projective tensor product} $\Fotimes_{j=1}^{m} L^{q_{j}}(X_{j})$ is the space of measurable functions $F : X_{1} \times \dotsm \times X_{m} \to \R$ such that the norm
\begin{equation}
\label{eq:Fotimes}
\norm{F}_{\Fotimes_{j=1}^{m} L^{q_{j}}(X_{j})}
:=
\inf_{F_{j} \in L^{q_{j}}(X_{j}) \ :\  F_{j} \geq 0, \ \abs{F} \leq F_{1} \otimes\dotsm \otimes F_{m}}
\prod_{j=1}^{m} \norm{F_{j}}_{L^{q_{j}}(X_{j})}
\end{equation}
is finite.
\end{definition}
Subadditivity of the functional \eqref{eq:Fotimes} is a consequence of H\"older's inequality with exponents $q_{1},\dotsc,q_{m}$, see \cite[Theorem 2.2]{MR761991}.

We denote by $\calQ$ the grid of dyadic cubes in $\R^{n}$ with side length $1$, and for $R>1$ we denote by $\calQ_{R}$ the set of $Q\in\calQ$ at distance at most $R$ from the origin.

\begin{theorem}
\label{thm:main:aff-subspaces}
Let $n\in\N = \Set{1,2,\dotsc}$, $m\geq 2$, $k_{1},\dotsc,k_{m} \in \N$, and $0<p_{1},\dotsc,p_{m}<\infty$ with $P := \sum_{j=1}^{m} p_{j} \geq 1$.
For each $1\leq j\leq m$ let $\calT_{j}$ be a family of $k_{j}$-dimensional affine subspaces of $\R^{n}$, not necessarily distinct.
Then for every $R>1$ we have
\begin{equation}
\label{eq:main:affine-subspaces}
\sum_{Q\in\calQ_{R}} \norm{ \BL(\vec{T}, \vec{p}, (1,R))^{-1/P} }_{\Fotimes_{j=1}^{m} \ell^{P/p_{j}} (\Set{T_{j} \in \calT_{j} \given T_{j}\cap Q \neq \emptyset})}^{P}
\leq C^{P}
\prod_{j=1}^{m} \abs{\calT_{j}}^{p_{j}}.
\end{equation}
\end{theorem}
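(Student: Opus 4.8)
The plan is to first unwind the Fremlin projective tensor product norm and then to build the required data by the polynomial method. For $Q\in\calQ_R$ write $\calT_j^Q:=\Set{T_j\in\calT_j\given T_j\cap Q\neq\emptyset}$ and let $G_Q$ be the function $\vec T\mapsto\BL(\vec T,\vec p,(1,R))^{-1/P}$ on $\prod_j\calT_j^Q$. By the explicit description \eqref{eq:Fotimes} of the norm and homogeneity, the estimate \eqref{eq:main:affine-subspaces} is equivalent to the assertion that for every $Q\in\calQ_R$ one can choose nonnegative functions $g_{j,Q}$ on $\calT_j^Q$ with
\[
\BL(\vec T,\vec p,(1,R))^{-1/P}\leq\prod_{j=1}^{m}g_{j,Q}(T_j)\quad\text{for all }(T_1,\dotsc,T_m)\in\prod_{j=1}^{m}\calT_j^Q,
\]
such that $\sum_{Q\in\calQ_R}\prod_{j=1}^{m}\bigl(\sum_{T_j\in\calT_j^Q}g_{j,Q}(T_j)^{P/p_j}\bigr)^{p_j}\leq C^{P}\prod_{j=1}^{m}\abs{\calT_j}^{p_j}$. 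Here one uses that $\BL$ depends only on the linear parts of the affine subspaces and is translation invariant, and one may assume the $\calT_j$ finite by monotone convergence.

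To produce such $g_{j,Q}$ I would use the polynomial method of Guth \cite{MR2746348}, in the form refined by Carbery and Valdimarsson \cite{MR3019726}. The polynomial ham--sandwich theorem applied to the $\lesssim R^{n}$ cubes of $\calQ_R$ yields a nonzero polynomial $p$ on $\R^{n}$ of degree $\Deg\lesssim R$ whose zero set $Z:=Z(p)$ is quantitatively visible from each $Q\in\calQ_R$: for every linear subspace $W\leq\R^{n}$ the $W$-directed $(n-1)$-volume of $Z\cap 3Q$ is $\gtrsim 1$, with the directional statement holding simultaneously in all $W$ (this is where the Borsuk--Ulam argument of \cite{MR3019726} enters). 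For $Q\in\calQ_R$ and $T_j\in\calT_j^Q$ let $\phi_{j,Q,T_j}$ be the function on $\R^{n}/T_j$ assigning to $\bar x$ the $(k_j-1)$-volume of the slice of $Z\cap 3Q$ by the affine $k_j$-plane through $x$ parallel to $T_j$, and put $\Vis_{j,Q}(T_j):=\int_{\R^{n}/T_j}\phi_{j,Q,T_j}$. One then defines $g_{j,Q}(T_j):=\lambda_j\Vis_{j,Q}(T_j)^{p_j/P}$, where $\prod_j\lambda_j$ is a dimensional constant fixed in the next step.

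For the pointwise inequality fix $Q$ and $\vec T\in\prod_j\calT_j^Q$. The $\phi_{j,Q,T_j}$ are, after a harmless enlargement of $3Q$, constant at scale $1$, they satisfy $\int_{\R^{n}/T_j}\phi_{j,Q,T_j}\asymp\Vis_{j,Q}(T_j)$, and the visibility of $Z$ from $Q$ in direction $T_j$ forces $\phi_{j,Q,T_j}\gtrsim 1$ on $\pi_j(Q)$; consequently $\int_{B(0,R)}\prod_{j=1}^{m}\phi_{j,Q,T_j}(x+T_j)^{p_j}\dif x\gtrsim\int_Q 1\,\dif x\gtrsim 1$. Applying the Brascamp--Lieb inequality \eqref{eq:BL} to the tuple $(\phi_{1,Q,T_1},\dotsc,\phi_{m,Q,T_m})$ --- whose constant is by definition exactly $\BL(\vec T,\vec p,(1,R))$, with no further loss --- gives $\prod_{j=1}^{m}\Vis_{j,Q}(T_j)^{p_j}\gtrsim\BL(\vec T,\vec p,(1,R))^{-1}$, and choosing the $\lambda_j$ so that $\prod_j\lambda_j$ absorbs the implied constant yields the required pointwise bound.

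There remains the global bound. Since $\sum_j(p_j/P)=1$, H\"older's inequality in $Q$ with exponents $P/p_j$ followed by the embedding $\ell^{P}\hookrightarrow\ell^{1}$ (here $P\geq 1$ enters) gives
\[
\sum_{Q\in\calQ_R}\prod_{j=1}^{m}\Bigl(\sum_{T_j\in\calT_j^Q}\Vis_{j,Q}(T_j)\Bigr)^{p_j}\leq\prod_{j=1}^{m}\Bigl(\sum_{T_j\in\calT_j}\ \sum_{\substack{Q\in\calQ_R\\ T_j\cap Q\neq\emptyset}}\Vis_{j,Q}(T_j)\Bigr)^{p_j},
\]
and B\'ezout's theorem --- applied to the restrictions of $p$ to $k_j$-planes parallel to $T_j$, which have degree $\leq\Deg$ --- bounds the inner sum over $Q$ by $\lesssim\Deg\,R^{k_j-1}$. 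This argument, however, produces a spurious factor $R^{\sum_j p_j k_j}$, and eliminating it is the main obstacle: a single polynomial adapted to $B(0,R)$ cannot work, because the B\'ezout losses are then too large (already for $k_j=1$). The sharp bound requires instead an iterative polynomial partitioning, in which the degree is chosen relative to the \emph{mass} $\sum_Q\norm{G_Q}^{P}$ rather than to the ambient ball and one recurses into the partition cells, so that the B\'ezout losses are compensated at each scale and the summation over levels converges --- the technical heart of the argument, in the spirit of \cite{MR2746348,MR3738255}. Once Theorem~\ref{thm:main:aff-subspaces} is established, Theorem~\ref{thm:main:uniform} follows by a routine argument, specialising the $g_{j,Q}$ to constants and reducing general functions constant at scale $1$ to indicator functions.
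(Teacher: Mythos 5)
You correctly diagnose that the naive version of your plan --- a single polynomial adapted to $B(0,R)$ with visibility $\gtrsim 1$ in every cube, followed by B\'ezout --- loses a factor $R^{\sum_j p_j k_j}$, but the remedy you propose (iterative polynomial partitioning with degree chosen against $\sum_Q\norm{G_Q}^P$) is not what is needed, and is not what the paper or its sources do; the polynomial degree remains $\lesssim R$ throughout. What is missing is the \emph{multilinear duality} step, Proposition~\ref{prop:multilinear-duality} (going back to \cite[Proposition~2]{MR3019726}): the estimate \eqref{eq:main:G} is equivalent to the statement that for every weight $M:\calQ_R\to[0,\infty)$ with $\sum_Q M(Q)=1$ there exist $S_j:\calQ_R\to[0,\infty)$ satisfying $G(Q)M(Q)^{P-1}\lesssim\prod_j S_j(Q)^{p_j}$ and $\sum_Q S_j(Q)\lesssim\deg H_j$. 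It is to this weight $M$, not to $B(0,R)$ or to the mass of $G$, that the polynomial construction is calibrated: Theorem~\ref{thm:vis:scaled} supplies a measure $\sigma$ on polynomials of degree $\lesssim R$ whose visibility in $Q$ is $\gtrsim R^n M(Q)$, so cubes carrying more of the weight $M$ see a more wildly oscillating zero set. The factorization is then $S_j(Q):=R^{-k_j}\abs{\<T_Q H_j,\mu_Q^{\wedge k_j}\>}$; the prefactor $R^{-k_j}$ exactly cancels the $\prod_l\deg p_l\lesssim R^{k_j}$ B\'ezout loss in $\sum_Q S_j(Q)$, while in the pointwise bound the resulting $R^{-\sum_j p_j k_j}$ is recouped from the calibrated visibility $R^n\vol\B_{\mu_Q}\lesssim M(Q)^{-1}$, the wedge--volume estimates of Section~\ref{sec:wedge}, and the rescaling \eqref{eq:BL-scaling} of the BL constant, with the factor $M(Q)^{P-1}$ absorbing $\vol(\B_{\mu_Q})^{1-P}$ precisely because $P\geq 1$. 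Without the duality and the weight-calibrated visibility, no choice of a single polynomial degree repairs the loss you identified.

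Two smaller points. First, the paper does not argue with $\calT_j$ directly but reduces to the variety statement Theorem~\ref{thm:main} by replacing each $\calT_j$ with the union of slightly perturbed members, which streamlines the bookkeeping. Second, the pointwise inequality is not obtained by feeding visibility slices of $Z$ into \eqref{eq:BL} as you sketch --- that would require a delicate matching of truncation scales --- but by testing \eqref{eq:BL} on the indicators $\one_{\B_{\mu_Q}+T_j}$ (Lemma~\ref{lem:BL-est}), which naturally produces the quantities $\abs{\<T_j,\mu_Q^{\wedge k_j}\>}$ via Corollary~\ref{cor:wedge:subspace} and matches the truncation range $(1/R,1)$ to $(1,R)$ through \eqref{eq:BL-scaling}.
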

Here $\abs{\calT_{j}}$ denotes the cardinality of the family $\calT_{j}$, taking into account multiplicity.
The norm on the left-hand side of \eqref{eq:main:affine-subspaces} can be written more explicitly as
\begin{align*}
\MoveEqLeft
\norm{ \BL(\vec{T}, \vec{p}, (1,R))^{-1/P} }_{\Fotimes_{j=1}^{m} \ell^{P/p_{j}} (\Set{T_{j} \in \calT_{j} \given T_{j}\cap Q \neq \emptyset})}^{P}
\\ &=
\inf_{F_{j} \geq 0 \,:\, \BL(\vec{T}, \vec{p}, (1,R))^{-1/P} \leq F_{1}(T_{1}) \dotsm F_{m}(T_{m})}
\prod_{j=1}^{m} \norm{F_{j}}_{\ell^{P/p_{j}}(\Set{T_{j} \in \calT_{j} \given T_{j}\cap Q \neq \emptyset})}^{P}
\\ &=
\inf_{F_{j} \geq 0 \,:\, \BL(\vec{T}, \vec{p}, (1,R))^{-1} \leq F_{1}(T_{1}) \dotsm F_{m}(T_{m})}
\prod_{j=1}^{m} \norm{F_{j}}_{\ell^{1/p_{j}}(\Set{T_{j} \in \calT_{j} \given T_{j}\cap Q \neq \emptyset})}.
\end{align*}
\begin{proof}[Proof of Theorem~\ref{thm:main:uniform} assuming Theorem~\ref{thm:main:aff-subspaces}]
If \eqref{eq:uniform-BL-bound} holds, then by monotonicity of the functional~\eqref{eq:Fotimes} we have
\begin{align*}
\MoveEqLeft
\norm{ \BL( \vec{T}, \vec{p}, (1,R) )^{-1/P} }_{\Fotimes_{j=1}^{m} \ell^{P/p_{j}} (\Set{T_{j} \in \calT_{j} \given T_{j}\cap Q \neq \emptyset})}^{P}
\\ &\geq
A \norm{ \one }_{\Fotimes_{j=1}^{m} \ell^{P/p_{j}} (\Set{T_{j} \in \calT_{j} \given T_{j}\cap Q \neq \emptyset})}^{P}
\\ &=
A \prod_{j=1}^{M} \abs{\Set{T_{j} \in \calT_{j} \given T_{j}\cap Q \neq \emptyset}}^{p_{j}}.
\end{align*}
Inserting this inequality in \eqref{eq:main:affine-subspaces}, we obtain \eqref{eq:main:uniform} for functions $f_{j}$ that are the characteristic functions of $1$-neighborhoods of $T_{j}$ in $\R^{n}/T_{j}$.
Since we allow repetitions of $T_{j}$'s, this result immediately extends to functions $f_{j}$ that are constant at scale $1$ and take only integer values.
By homogeneity this extends to functions taking rational values, and by the monotone convergence theorem to arbitrary functions constant at scale $1$.
\end{proof}

\subsubsection{Loomis--Whitney case}
In the case $q_{1}=\dotsb=q_{m}=m$ it is easy to see that the norm \eqref{eq:Fotimes} is larger than the $L^{m}$ norm on the product space:
\begin{equation}
\label{eq:Fotimes-vs-Lm}
\norm{F}_{\Fotimes_{j=1}^{m} L^{m}(X_{j})}
\geq
\norm{F}_{L^{m}(X_{1} \times \dotsm \times X_{m})}.
\end{equation}
Suppose that $2 \leq m \leq n$, $k_{1}+\dotsb+k_{m}=n$, and $p_{1}=\dotsb=p_{m}=1/(m-1)$.
Then by \eqref{eq:LW} we have
\begin{equation}
\label{eq:LW-local}
\BL( \vec{T}, \vec{p}, (1,R) )
\leq
\BL( \vec{T}, \vec{p} )
=
\abs[\Big]{\bigwedge_{j=1}^{m} T_{j}}^{-1/(m-1)}.
\end{equation}
From \eqref{eq:LW-local} and \eqref{eq:Fotimes-vs-Lm} it follows that
\begin{align*}
\MoveEqLeft
\norm{ \BL( \vec{T}, \vec{p}, (1,R) )^{-1/P} }_{\Fotimes_{j=1}^{m} \ell^{P/p_{j}} (\Set{T_{j} \in \calT_{j} \given T_{j}\cap Q \neq \emptyset})}^{P}
\\ &\geq
\norm{ \abs[\Big]{\bigwedge_{j=1}^{m} T_{j}}^{1/(P(m-1))} }_{\Fotimes_{j=1}^{m} \ell^{P/p_{j}} (\Set{T_{j} \in \calT_{j} \given T_{j}\cap Q \neq \emptyset})}^{P}
\\ &\geq
\norm{ \abs[\Big]{\bigwedge_{j=1}^{m} T_{j}}^{1/m} }_{\ell^{m}( \prod_{j=1}^{m} \Set{T_{j} \in \calT_{j} \given T_{j}\cap Q \neq \emptyset})}^{P}
\\ &=
\Bigl( \sum_{T_{1} \in \calT_{1} \given T_{1}\cap Q \neq \emptyset} \dotsm \sum_{T_{m} \in \calT_{m} \given T_{m}\cap Q \neq \emptyset} \abs[\Big]{\bigwedge_{j=1}^{m} T_{j}} \Bigr)^{1/(m-1)}.
\end{align*}
Inserting this estimate in \eqref{eq:main:affine-subspaces} and observing that the resulting estimate does not depend on $R$, we recover the following result.
\begin{corollary}[{\cite[Theorem 1.5]{arxiv:1510.09132}}]
\label{cor:loomis-whitney}
In the setting of Theorem~\ref{thm:main:aff-subspaces} suppose that $\sum_{j=1}^{m} k_{j} = n$.
Then
\begin{equation}
\label{eq:main:loomis-whitney}
\sum_{Q\in\calQ} \abs[\Big]{ \sum_{T_{1} \in \calT_{1} : T_{1}\cap Q \neq \emptyset} \dotsi \sum_{T_{m} \in \calT_{m} : T_{m}\cap Q \neq \emptyset} \abs[\Big]{\bigwedge_{j=1}^{m} T_{j}} }^{1/(m-1)}
\leq C
\prod_{j=1}^{m} \abs{\calT_{j}}^{1/(m-1)}.
\end{equation}
\end{corollary}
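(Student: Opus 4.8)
The plan is to obtain Corollary~\ref{cor:loomis-whitney} from Theorem~\ref{thm:main:aff-subspaces}, applied with the Loomis--Whitney exponents $p_{1}=\dotsb=p_{m}=1/(m-1)$, by bounding for each cube $Q$ the Fremlin norm on the left-hand side of \eqref{eq:main:affine-subspaces} from below in terms of the transversality factors $\abs{\bigwedge_{j}T_{j}}$. First I would dispose of the trivial reductions: if some $\calT_{j}$ is infinite, the right-hand side of \eqref{eq:main:loomis-whitney} is $+\infty$ and there is nothing to prove, so I may assume all the $\calT_{j}$ are finite; and since $\sum_{j}k_{j}=n$ with each $k_{j}\ge 1$ forces $2\le m\le n$, the Loomis--Whitney identity \eqref{eq:LW}, and hence its truncated form \eqref{eq:LW-local}, are available in the present setting.

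Next I would record the exponent bookkeeping forced by the choice $p_{j}=1/(m-1)$: here $P=\sum_{j}p_{j}=m/(m-1)$, so $P\ge 1$ (indeed $1<P\le 2$), $P/p_{j}=m$ for every $j$, and $P/m=1/(m-1)$. With these identities in hand I would assemble the key pointwise lower bound on the summand of \eqref{eq:main:affine-subspaces}, following the chain of inequalities preceding the statement of the corollary: from \eqref{eq:LW-local} one gets $\BL(\vec{T},\vec{p},(1,R))^{-1/P}\ge\abs{\bigwedge_{j}T_{j}}^{1/m}$ (since $P(m-1)=m$); monotonicity of the functional \eqref{eq:Fotimes} then lets me replace $\BL(\vec{T},\vec{p},(1,R))^{-1/P}$ by this smaller quantity; the inequality \eqref{eq:Fotimes-vs-Lm}, applicable because $P/p_{j}=m$ and $\sum_{j}1/m=1$, bounds the resulting Fremlin norm from below by the $\ell^{m}$ norm on the product set; and $P/m=1/(m-1)$ rewrites the outcome as
\[
\norm{ \BL(\vec{T}, \vec{p}, (1,R))^{-1/P} }_{\Fotimes_{j=1}^{m} \ell^{P/p_{j}} (\Set{T_{j} \in \calT_{j} \given T_{j}\cap Q \neq \emptyset})}^{P}
\ge
\Bigl( \sum_{T_{1} \in \calT_{1} : T_{1}\cap Q \neq \emptyset} \dotsm \sum_{T_{m} \in \calT_{m} : T_{m}\cap Q \neq \emptyset} \abs[\Big]{\bigwedge_{j=1}^{m} T_{j}} \Bigr)^{1/(m-1)}
\]
for every $Q\in\calQ_{R}$.

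Finally I would insert this lower bound into \eqref{eq:main:affine-subspaces} and use $\prod_{j}\abs{\calT_{j}}^{p_{j}}=\prod_{j}\abs{\calT_{j}}^{1/(m-1)}$, which gives \eqref{eq:main:loomis-whitney} with $\calQ_{R}$ in place of $\calQ$ and with $C^{P}$ in place of $C$. Since $C^{P}\le C^{2}$ is again a constant depending only on $n$ and the right-hand side does not involve $R$, I would let $R\to\infty$; because the summands are nonnegative and $\calQ=\bigcup_{R>1}\calQ_{R}$, the monotone convergence theorem upgrades the estimate to the stated sum over all of $\calQ$. I do not expect a genuine obstacle: the entire analytic content is contained in Theorem~\ref{thm:main:aff-subspaces}, which I am assuming, and the only delicate points are the exponent identities $P/p_{j}=m$ and $P/m=1/(m-1)$ and the direction of the monotonicity of the Fremlin projective tensor product norm, which must be such that replacing the integrand by the smaller quantity $\abs{\bigwedge_{j}T_{j}}^{1/m}$ decreases the norm.
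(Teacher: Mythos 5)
Your proposal is correct and follows essentially the same route as the paper: apply Theorem~\ref{thm:main:aff-subspaces} with $p_{1}=\dotsb=p_{m}=1/(m-1)$, lower-bound the Fremlin norm on each cube via \eqref{eq:LW-local}, monotonicity of \eqref{eq:Fotimes}, and \eqref{eq:Fotimes-vs-Lm}, and then let $R\to\infty$ since the right-hand side is $R$-independent. The exponent bookkeeping ($P=m/(m-1)$, $P/p_{j}=m$, $P(m-1)=m$) and the direction of the monotonicity are exactly as you say; the only differences from the paper are cosmetic, namely your explicit remarks that $m\le n$ is forced and that one may assume the $\calT_{j}$ finite.
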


Corollary~\ref{cor:loomis-whitney} in turn recovers the results of \cite{MR2746348,MR3019726}.
In Guth's case \cite{MR2746348} all affine subspaces are one-dimensional (lines).
Carbery and Valdimarsson \cite{MR3019726} state a result for $m$ families of lines with $2\leq m\leq n$.
In order to recover their result in the case $m<n$, one can replace the $m$-th family of lines by a family of affine subspaces of dimension $n-m+1$ as follows.
For each line find an orthonormal basis $e_{1},\dotsc,e_{n}$ of $\R^{n}$ such that $e_{1}$ points in the direction of that line.
Replace the line by the collection of affine subspaces containing the line and spanned by $e_{1}$ and a subset of $\Set{e_{2},\dotsc,e_{n}}$ of cardinality $n-m$.

\subsection{KBL inequality for varieties}
Following \cite{MR2860188} and \cite{MR3738255}, we deduce Theorem~\ref{thm:main:aff-subspaces} from a corresponding result for algebraic varieties.

\begin{theorem}
\label{thm:main}
Let $H_{1},\dotsc,H_{m}$ be algebraic varieties in $\R^{n}$ with $\dim H_{j} = k_{j}$ and $0 < p_{1},\dotsc,p_{m} < \infty$ with $P := \sum_{j=1}^{m} p_{j} \geq 1$.
Then for every $R>1$ we have
\begin{equation}
\label{eq:main}
\sum_{Q\in\calQ_{R}} \norm{ \BL( \widevec{T_{x_{j}}H_{j}}, \vec{p}, (1,R) )^{-1/P} }_{\Fotimes_{j=1}^{m} L^{P/p_{j}}_{x_{j}}(H_{j} \cap Q)}^{P}
\leq C^{P}
\prod_{j=1}^{m} (\deg H_{j})^{p_{j}},
\end{equation}
where each $H_{j}$ is equipped with the normalized $k_{j}$-dimensional surface measure on smooth points, and where $T_{x}H$ denotes the tangent space to $H$ at $x$.
\end{theorem}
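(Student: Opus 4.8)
The plan is to prove Theorem~\ref{thm:main} by the polynomial method, in the spirit of Guth~\cite{MR2746348} and R.~Zhang~\cite{MR3738255}, and to deduce Theorem~\ref{thm:main:aff-subspaces} from it. For the deduction, given the families $\calT_{j}$, one applies Theorem~\ref{thm:main} to $H_{j} := \bigcup_{T\in\calT_{j}} T$, which is a variety of dimension $k_{j}$ and degree at most $\abs{\calT_{j}}$ whose normalized surface measure is $\abs{\calT_{j}}^{-1}\sum_{T\in\calT_{j}}\sigma_{T}$ and whose tangent space at a relatively interior point of a single $T$ is $\widevec{T}$; discretizing the $L^{P/p_{j}}_{x_{j}}(H_{j}\cap Q)$ norms in \eqref{eq:main} identifies them with the $\ell^{P/p_{j}}$ norms in \eqref{eq:main:affine-subspaces}, both being instances of the same Fremlin tensor product. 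So it suffices to prove \eqref{eq:main}, and for this one first performs routine reductions: decomposing each $H_{j}$ into its smooth locus and a variety of strictly smaller dimension (and of degree bounded in terms of $\deg H_{j}$) and inducting on $\sum_{j}\dim H_{j}$ reduces to the case that the $H_{j}$ are smooth, and a limiting/partition argument then replaces each surface measure by a finite sum of point masses. By the scaling identity \eqref{eq:BL-scaling} and the upper bound in Theorem~\ref{thm:Maldague} the constants $\BL(\widevec{T_{x_{j}}H_{j}},\vec p,(1,R))$ are bounded below, so the weights in \eqref{eq:main} are bounded and the left-hand side becomes a finite weighted sum over the ``heavy'' cubes of $\calQ_{R}$, on which one may induct.

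The core is a polynomial-partitioning step. After a layer-cake decomposition in the output it suffices to bound $\lambda$ times the number of cubes $Q\in\calQ_{R}$ whose contribution to \eqref{eq:main} exceeds a given $\lambda$; call this collection $\mathcal{C}$. By the polynomial ham-sandwich theorem (a count of the $\sim D^{n}$ monomials of degree $\le D$) there is a nonzero polynomial $g$ of degree $D\lesssim(\#\mathcal{C})^{1/n}$ vanishing at a point of every $Q\in\mathcal{C}$. The step I expect to be the main obstacle --- and where this goes beyond the Loomis--Whitney case of \cite{MR2746348} and the scale-invariant case of \cite{MR3738255} --- is a geometric lemma relating the general Brascamp--Lieb weight to a \emph{visibility} functional: on each heavy cube $Q$ one needs $\BL(\widevec{T_{x_{j}}H_{j}},\vec p,(1,R))^{-1/P}$ to be comparable, up to a dimensional factor, to a directed-volume quantity measuring how transverse the tangent spaces $\widevec{T_{x_{j}}H_{j}}$ are to the hyperplane $T_{x}Z(g)$. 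The genuine $R$-dependence of the Brascamp--Lieb constant (absent in the scale-invariant setting) is absorbed by \eqref{eq:BL-scaling}, which makes this comparison scale-independent once normalized, and establishing it requires the structural description of Brascamp--Lieb constants underlying Theorem~\ref{thm:Maldague}.

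Granting such a comparison, one bounds the visibility functionals by degrees: for each $j$ the directed volume of $H_{j}$ inside a unit cube is $\lesssim\deg H_{j}$ by a Cauchy--Crofton-type estimate, while the B\'ezout bound $\deg(H_{j}\cap Z(g))\lesssim(\deg H_{j})(\deg g)$ controls how often $H_{j}$ meets $Z(g)$ transversally; components of $H_{j}$ lying inside $Z(g)$ are harmless because then $\widevec{T_{x}H_{j}}\subseteq\widevec{T_{x}Z(g)}$, so the weight $\BL(\cdots)^{-1/P}$ is already small there, and the degree of $g$ that was needed can correspondingly be lowered, feeding the induction on $\sum_{j}\deg H_{j}$. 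Summing over $j$ and over $\mathcal{C}$ then bounds $(\#\mathcal{C})\cdot\lambda$ in terms of $\prod_{j}(\deg H_{j})^{p_{j}}$. The remaining bookkeeping --- and the reason the hypothesis $P\ge1$ and the exponents $P/p_{j}$ in the Fremlin tensor product \eqref{eq:Fotimes} enter --- is that each additive splitting of the $H_{j}$ (into components, transverse/tangent parts, and pieces produced by the iteration) must cost no more than the corresponding splitting of $(\deg H_{j})^{p_{j}}$; subadditivity of $\norm{\cdot}_{\Fotimes_{j}\ell^{P/p_{j}}}$, which comes from H\"older's inequality, makes this work precisely for the normalization in \eqref{eq:main}, and the final constant $C^{P}$ is accumulated from the boundedly many iteration steps, each costing a dimensional constant, distributed among the $m$ factors.
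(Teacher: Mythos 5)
Your outline correctly identifies the big ingredients (a polynomial-method argument, B\'ezout/Cauchy--Crofton degree bounds, and a geometric lemma tying the Brascamp--Lieb constant to a directed-volume/visibility quantity, which is indeed the genuinely new step beyond Guth~\cite{MR2746348} and Zhang~\cite{MR3738255}), and you correctly locate where $P\ge1$ and the Fremlin norm's H\"older subadditivity must enter. But the central construction you propose is too weak to give the endpoint result. A polynomial of degree $D\lesssim(\#\mathcal{C})^{1/n}$ that merely \emph{vanishes at a point} of each heavy cube $Q$ (which is what a $\sim D^n$ dimension count gives) carries no lower bound on its directional surface area inside $Q$; its zero set can pass through the cube in a tiny patch, so there is no way to compare $\BL(\widevec{T_{x_j}H_j},\vec p,(1,R))^{-1/P}$ to a directed volume with respect to $Z(g)$ on that cube. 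What is actually needed is Theorem~\ref{thm:vis:scaled}: a probability measure $\sigma$ on polynomials of degree $\lesssim R$ whose \emph{visibility} in every prescribed cube $Q$ is at least $R^nM(Q)$ and whose unit ball $\B_{\s_{\sigma,Q}}$ is contained in $C\B$, and this is obtained from the Borsuk--Ulam theorem (via the covering Lemma~\ref{covering}, after discretizing the space of norms and splitting into bisection events), not from a vanishing-at-points dimension count. The layer-cake / induction on $\sum_j\dim H_j$ or $\sum_j\deg H_j$ you sketch is the framework for the \emph{non}-endpoint multilinear Kakeya (\`a la \cite{MR3300318}) and would leak at least logarithmic factors in $R$; the paper instead avoids all iteration by passing to the multilinear-duality reformulation of Proposition~\ref{prop:multilinear-duality}, choosing $S_j(Q):=R^{-k_j}\bigl|\langle T_QH_j,\mu_Q^{\wedge k_j}\rangle\bigr|$ explicitly, and verifying \eqref{eq:tensor:prod} and \eqref{eq:tensor:sum} directly.

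Two more concrete corrections. First, the comparison lemma (Lemma~\ref{lem:BL-est} in the paper) is not a pointwise statement about transversality to a single hyperplane $T_xZ(g)$; it is a volume inequality $\vol(\B_\mu)^{1-P}\le C^P\,\BL(\vec T,\vec p,(1/R,1))\prod_j\bigl|\langle T_j,\mu^{\wedge k_j}\rangle\bigr|^{p_j}$ for the averaged normal measure $\mu=\mu_{\sigma,Q}$, proved by plugging the indicator of $\B_\mu$ into the local BL inequality and using the wedge/polar-body estimates of Lemma~\ref{lem:wedge} and Corollary~\ref{cor:wedge:subspace}; establishing it does not require the structure theory behind Theorem~\ref{thm:Maldague}, only Definition~\ref{def:BL} and the Rogers--Shephard type bound of Lemma~\ref{lem:convex-volume-split}. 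Second, your claim that components of $H_j$ contained in $Z(g)$ are ``harmless because the weight $\BL(\cdots)^{-1/P}$ is already small there'' is false: the BL constant depends only on the tuple $\widevec{T_{x_j}H_j}$ and has nothing to do with $Z(g)$, and in any case the paper's argument requires no such degenerate-case analysis since the visibility bound together with the intersection-multiplicity estimate (Lemma~\ref{lem:intersection} plus B\'ezout) handle all configurations at once.
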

In the scale-invariant case, Theorem~\ref{thm:main} with rational exponents $p_{j}$ essentially reduces to \cite[Theorem 8.1]{MR3738255}, up to the estimate \eqref{eq:Fotimes-vs-Lm}.
\begin{proof}[Proof of Theorem~\ref{thm:main:aff-subspaces} assuming Theorem~\ref{thm:main}]
By scaling, Theorem~\ref{thm:main} implies a similar statement with $H_{j} \cap Q$ replaced by $H_{j} \cap 3Q$.

Translating the $T_{j}$'s in each family $\calT_{j}$ slightly, we may assume that none of them coincide.
Then their union $H_{j} := \cup_{T_{j} \in \calT_{j}} T_{j}$ is a variety of degree $\lesssim \abs{\calT_{j}}$, and almost all points in each $T_{j}$ are smooth points of $H_{j}$.
It remains to observe that, for a $k$-dimensional affine subspace $T\subset\R^{n}$ and a unit length dyadic cube $Q$, non-emptiness of the intersection $T \cap Q \neq \emptyset$ implies $\vol_{k} (T \cap 3Q) \gtrsim 1$.
\end{proof}

The proof of Theorem~\ref{thm:main} consists of two parts.
The first part is Guth's construction of varieties with prescribed bounds on directional surface area in many cubes, Theorem~\ref{thm:vis:scaled}.
Its proof, which is presented in Section~\ref{sec:poly}, is not new, but we make a few simplifications compared to \cite{MR3019726}.
The second part consists in exploiting Theorem~\ref{thm:vis:scaled}, which we do in Section~\ref{sec:prf}.
The outline of this argument is close to \cite{MR3738255}, but using the Fremlin tensor product we avoid a few technical difficulties of that article.

\subsection{Notation}

\subsubsection{Exterior algebra}
Let $V,W$ be real vector spaces.
Recall that $\Lambda^{k}V$ denotes the $k$-th exterior power of $V$.
A bilinear form $\<\cdot,\cdot\> : V\times W \to \R$ induces a bilinear form $\Lambda^{k}V \times \Lambda^{k}W \to \R$ via
\begin{equation}
\label{eq:inner-prod-on-Lambdak}
\<v_{1} \wedge\dotsb\wedge v_{k}, w_{1} \wedge\dotsb\wedge w_{k} \> := \det (\<v_{i},w_{j}\>)_{i,j=1}^{k}.
\end{equation}
In particular, an inner product on $V$ induces an inner product on $\Lambda^{k}V$, which in turn induces a norm.

Throughout this article $n$ will remain fixed.
We write $\Lambda^{k} := \Lambda^{k}\R^{n}$ and let
\[
\abs{\Lambda^{k}}
:=
\Set{ v_{1} \wedge\dotsb\wedge v_{k} \given v_{1},\dotsc,v_{k} \in \R^{n} } / \Set{\pm 1}
\]
denote the set of simple $k$-fold wedge products of vectors in $\R^{n}$ modulo multiplication by $\pm 1$.
We work with $\abs{\Lambda^{k}}$ in place of $\Lambda^{k}$ in order to avoid dependence on choices of orientation of various spaces.

The wedge product $\wedge$ is well-defined as a map $\abs{\Lambda^{k_{1}}}\times\abs{\Lambda^{k_{2}}} \to \abs{\Lambda^{k_{1}+k_{2}}}$, the Hodge star $\star$ is well-defined as a map from $\abs{\Lambda^{k}}$ to $\abs{\Lambda^{n-k}}$, and the absolute value of the inner product (induced by \eqref{eq:inner-prod-on-Lambdak} from the Euclidean inner product) is well-defined as a map from $\abs{\Lambda^{k}} \times \abs{\Lambda^{k}}$ to $[0,\infty)$.
In particular, also the norm $\abs{v}$ is well-defined on $\abs{\Lambda^{k}}$.
More geometrically, $\abs{v_{1} \wedge \dotsb \wedge v_{k}}$ is the $k$-dimensional volume of the parallelepiped spanned by $v_{1},\dotsc,v_{k}$.

These operations can be extended to measures as follows.
Given measures $\mu_{1},\mu_{2}$ on $\abs{\Lambda^{k_{1}}}$ and $\abs{\Lambda^{k_{2}}}$, we denote by $\mu_{1}\wedge\mu_{2}$ the measure on $\abs{\Lambda^{k_{1}+k_{2}}}$ that is the pushforward of $\mu_{1}\times\mu_{2}$ under $\wedge$.
Similarly, given a measure $\mu$ on $\abs{\Lambda^{1}}$, we denote by $\mu^{\wedge k}$ the measure on $\abs{\Lambda^{k}}$ that is the pushforward of the $k$-fold product $\mu \times \dotsm \times \mu$ under the map $(v_{1},\dotsc,v_{k}) \mapsto v_{1} \wedge \dotsm \wedge v_{k}$.
Given a measure $\mu$ on $\abs{\Lambda^{k}}$, we denote by $\star\mu$ the measure on $\abs{\Lambda^{n-k}}$ that is the pushforward of $\mu$ under $\star$.
Given measures $\mu_{1},\mu_{2}$ on $\abs{\Lambda^{k}}$, we denote by $\<\mu_{1},\mu_{2}\>$ the measure on $[0,\infty)$ that is the pushforward of $\mu_{1}\times\mu_{2}$ under the map $(v_{1},v_{2}) \mapsto \abs{\<v_{1},v_{2}\>}$.
Finally, we denote the first moment of a measure $\mu$ on $\abs{\Lambda^{k}}$ by $\abs{\mu} := \int \abs{v} \dif\mu(v)$.

For a $k$-dimensional affine subspace $T \subset \R^{n}$, abusing the notation, we also denote by $T$ the unique element of $\abs{\Lambda^{k}}$ corresponding to the normalized Euclidean volume form on $T$.
Since the volume form is unique up to the choice of orientation, $T$ is well defined as an element of $\abs{\Lambda^{k}}$.

For a $k$-dimensional variety $H$ in $\R^{n}$, we denote by $T_{Q}H$ the push-forward of the surface measure on smooth points of $H\cap Q$ to $\abs{\Lambda^{k}}$ under the map that assigns to a point the normalized volume form of its tangent space.

\subsubsection{Directional area}
For a hypersurface $Z\subset\R^{n}$, let $NZ$ denote the pushforward of the normalized surface measure to $\abs{\Lambda^{1}}$ under the map that assigns to a smooth point $x\in Z$ the normalized normal vector $N_{x}Z$ at that point.
In particular, $NZ = \star TZ$.

Let $Z_{p} := \Set{x \given p(x)=0}$ denote the zero set of a polynomial.
For a probability measure $\sigma$ on the space of non-zero polynomials in $n$ variables we denote the average normal measure by
\begin{equation}
\label{eq:normal-measure}
\mu_{\sigma,Q} := \int N(Z_{p} \cap Q) \dif\sigma(p).
\end{equation}
This is a measure on the space $\abs{\Lambda^{1}}$.

To a measure $\mu$ on $\abs{\Lambda^{1}}$ we associate the seminorm
\begin{equation}
\label{eq:s_mu}
\s_{\mu}(v)
:=
\abs{\<v,\mu\>}
:=
\int_{\abs{\Lambda^{1}}} \abs{\<v,w\>} \dif\mu(w),
\quad
v \in \R^{n}.
\end{equation}
This is well-defined because the absolute value of the inner product is well-defined modulo $\Set{\pm 1}$.
In the case $\mu=NZ$ the seminorm \eqref{eq:s_mu} is the \emph{directed volume} \cite{MR2746348} or \emph{directional area} \cite{MR3019726} of $Z$ in the direction $v \in \R^{n}$.
We abbreviate
\[
\s_{\sigma,Q} := \s_{\mu_{\sigma,Q}}.
\]
We denote the Euclidean unit ball of $\R^n$ by $\B$.
For seminorm $\s$ on $\R^{n}$ we denote the associated unit ball by
\begin{equation}
\label{eq:Bs}
\B_{\s} := \Set{ v \in \R^{n} \given \s(v) \leq 1 }.
\end{equation}

\subsection*{Acknowledgment}
The extension of Theorem~\ref{thm:main:uniform} beyond the scale-invariant case \eqref{eq:BCCT-scaling-cond} is motivated by ongoing joint work with Shaoming Guo and Ruixiang Zhang.
This work was partially supported by the Hausdorff Center for Mathematics (DFG EXC 2047).
I thank the anonymous referees for numerous corrections and helpful suggestions pertaining to exposition.

\section{Polynomials with high visibility}
\label{sec:poly}
The following result is essentially due to Guth \cite{MR2746348}.
In this section we present a simplified proof, which is due to Carbery and Valdimarsson \cite{MR3019726}.
\begin{theorem}
\label{thm:vis:scaled}
For every $R>1$ there exists a positive integer
\begin{equation}
\label{deg}
\Deg \lesssim R
\end{equation}
such that for every function $M : \calQ_{R} \to [0,\infty)$ with $\sum_{Q \in \calQ_{R}} M(Q) = 1$ there exists a probability measure $\sigma$ on the space of non-zero polynomials in $n$ variables of degree at most $D$ such that for every $Q \in \calQ_{R}$ we have
\begin{equation}
\label{eq:Vis>M:scaled}
R^{n} M(Q) \vol \B_{\s_{\sigma,Q}} \lesssim 1
\end{equation}
and
\begin{equation}
\label{eq:Bmu-subset-B}
\B_{\s_{\sigma,Q}} \subset C\B.
\end{equation}
\end{theorem}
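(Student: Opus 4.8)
The plan is to establish Theorem~\ref{thm:vis:scaled} by Guth's polynomial method \cite{MR2746348} in the form streamlined by Carbery and Valdimarsson \cite{MR3019726}: one builds $p$ as a product of polynomials obtained by repeatedly bisecting the measure attached to $M$ with the polynomial ham-sandwich theorem, and one multiplies in a fixed ``reference grid'' factor so that \eqref{eq:Bmu-subset-B} becomes automatic. The tool is the following consequence of the Borsuk--Ulam theorem: given finite Borel measures $\mu_{1},\dots,\mu_{N}$ on $\R^{n}$ that do not charge algebraic hypersurfaces, there is a nonzero polynomial of degree $\lesssim N^{1/n}$ whose zero set bisects each $\mu_{i}$; one picks $d$ with $\dim\Poly[d]=\binom{n+d}{n}\geq N+1$ and applies Borsuk--Ulam to the odd continuous map that sends $p$ on the unit sphere of $\Poly[d]$ to $(\mu_{i}(\{p>0\})-\mu_{i}(\{p<0\}))_{i=1}^{N}$.

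For the construction, regard $M$ as the absolutely continuous measure $\sum_{Q\in\calQ_{R}}M(Q)\,\mathrm{Leb}|_{Q}$, and fix $T:=\ceil{n\log_{2}R}$, so that $2^{-T}\leq R^{-n}$. Starting from a single cell, at step $t=1,\dots,T$ apply the ham-sandwich theorem to the restrictions of $M$ to the (at most $2^{t-1}$) current cells to obtain a nonzero $p_{t}$ with $\deg p_{t}\lesssim 2^{(t-1)/n}$ bisecting $M$ on each of them, and then subdivide each cell along the two sign conditions of $p_{t}$; after step $T$ every cell has $M$-mass exactly $2^{-T}$. Let $p_{0}:=\prod_{i=1}^{n}\prod_{k\in\mathbb{Z},\,\abs{k}\leq R+1}(x_{i}-k-\tfrac12)$ and $p:=p_{0}p_{1}\cdots p_{T}$; since $\deg p=\deg p_{0}+\sum_{t=1}^{T}\deg p_{t}\lesssim R+\sum_{t=1}^{T}2^{t/n}\lesssim R$, a degree $\Deg\lesssim R$ as required in \eqref{deg} suffices. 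Take $\sigma$ to be the point mass at $p$, spread over an arbitrarily small ball of polynomials so that $Z_{p}$ is a smooth hypersurface for $\sigma$-almost every polynomial; then $\mu_{\sigma,Q}$ is, up to an arbitrarily small error, the pushforward of the surface measure of $Z_{p}\cap Q$ under the Gauss map, and in particular $\mu_{\sigma,Q}\geq N(Z_{p_{0}}\cap Q)$.

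It remains to check \eqref{eq:Vis>M:scaled} and \eqref{eq:Bmu-subset-B} for a fixed $Q=[a_{1},a_{1}+1]\times\dots\times[a_{n},a_{n}+1]\in\calQ_{R}$. The $n$ mutually orthogonal unit squares $\{x_{i}=a_{i}+\tfrac12\}\cap Q$ contained in $Z_{p_{0}}$ force $\s_{\sigma,Q}(v)\geq\sum_{i}\abs{v_{i}}\geq\abs{v}$, hence $\B_{\s_{\sigma,Q}}\subseteq\B$, which is \eqref{eq:Bmu-subset-B} and also yields \eqref{eq:Vis>M:scaled} whenever $R^{n}M(Q)\lesssim 1$, in particular when $M(Q)\leq R^{-n}$. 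When $M(Q)>R^{-n}$, every cell of the final partition has $M$-mass $2^{-T}\leq R^{-n}$, so $Q$ meets at least $R^{n}M(Q)$ distinct cells; distinct cells lie in distinct connected components of $\R^{n}\setminus Z_{p}$, so $Z_{p}$ cuts $Q$ into at least $R^{n}M(Q)$ pieces. Since $\B_{\s_{\sigma,Q}}$ is the polar body of the zonoid $Z_{\mu_{\sigma,Q}}:=\int[-w,w]\,\dif\mu_{\sigma,Q}(w)$, the Santaló and Bourgain--Milman inequalities give $\vol\B_{\s_{\sigma,Q}}\asymp_{n}\vol(Z_{\mu_{\sigma,Q}})^{-1}$, so \eqref{eq:Vis>M:scaled} reduces to the bound $\vol Z_{\mu_{\sigma,Q}}=c_{n}\int\cdots\int\abs{w_{1}\wedge\dots\wedge w_{n}}\,\dif\mu_{\sigma,Q}^{\otimes n}\gtrsim R^{n}M(Q)$. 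An isoperimetric inequality extracts the correct total mass $\abs{\mu_{\sigma,Q}}\gtrsim RM(Q)^{1/n}$ from the count of pieces, and the remaining content is Guth's visibility estimate: that, because of the balanced scale-by-scale structure of the bisection, this directed area is spread over $\gtrsim n$ roughly orthogonal directions rather than being nearly coplanar.

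That last point is where the difficulty lies; the rest is bookkeeping. It is the step that genuinely uses the iterative ham-sandwich construction rather than only the crude count of pieces---which by itself is compatible with a nearly degenerate grid of cuts, and hence with $\vol\B_{\s_{\sigma,Q}}$ much larger than $(R^{n}M(Q))^{-1}$---and it is precisely the substance of Guth's visibility argument \cite{MR2746348} and of its reformulation by Carbery and Valdimarsson \cite{MR3019726}. As a consistency check, Wongkew's theorem bounds the $(n-1)$-dimensional measure of $Z_{p}\cap B(0,R)$ by $\lesssim(\deg p)\,R^{n-1}\lesssim R^{n}$, while $\sum_{Q\in\calQ_{R}}RM(Q)^{1/n}\lesssim R\,(R^{n})^{1-1/n}=R^{n}$ by Hölder's inequality, so the directed-area budget is exactly tight and no room is lost in the exponents.
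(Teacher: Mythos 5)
Your reduction to \eqref{eq:Bmu-subset-B} via the fixed grid polynomial $p_{0}$ matches the paper's, and the degree bookkeeping $\Deg\lesssim R$ is fine. But the route you propose to \eqref{eq:Vis>M:scaled} is not the one taken in the paper (nor, despite your attribution, the one in \cite{MR2746348} or \cite{MR3019726}), and it has a genuine gap at its core which you yourself identify and then leave open.

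The paper does not construct $p$ by iterated polynomial ham-sandwich bisection. It instead applies the Borsuk--Ulam theorem once, in Lusternik--Schnirelmann covering form (Lemma~\ref{covering}), to the \emph{sphere} $\Poly$ of normalized polynomials: the set of ``bad'' polynomials, i.e.\ those with $\Vis_{p,Q}<M(Q)$ for some $Q$, is written as a union of $\lesssim\sum_{Q}M(Q)$ pairs of antipodal, topologically separated subsets $S(Q,\theta,\alpha,\pm)$. The decomposition is obtained by first discretizing the space of centrally symmetric convex bodies by ellipsoids (Lemma~\ref{ellipsoid}), then observing via Lemma~\ref{lem:bisect}/Corollary~\ref{cor:bisect} that a low-visibility polynomial must fail to bisect at least one of a family of $\sim M(Q)$ small translates of the matching ellipsoid inside $Q$ (Lemma~\ref{manybisections}), and finally splitting by the sign of the bisection defect (Section~\ref{Antipodes}). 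The count $\sum_{Q}M(Q)\lesssim\Deg^{n}\sim\dim\Poly$ is exactly what Lemma~\ref{covering} needs to conclude that some $p\in\Poly$ is not bad.

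Your iterated-bisection scheme is a genuinely different plan, but it does not reach the conclusion. You correctly observe that the count of cells together with the isoperimetric inequality yields a lower bound on the total directed area $\abs{\mu_{\sigma,Q}}$, and you correctly observe that this is not visibility: the same total area is achieved by a degenerate, nearly parallel stack of cuts, in which case $\vol\B_{\s_{\sigma,Q}}$ is far larger than $(R^{n}M(Q))^{-1}$. You then write that ``the remaining content is Guth's visibility estimate'' and defer; but nothing in the iterated ham-sandwich construction forces the successive bisecting hypersurfaces to be transverse, so there is no reason the $p=p_{0}p_{1}\cdots p_{T}$ you build satisfies the visibility bound, and no indication of how to modify the construction so that it would. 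This is not a detail that can be supplied after the fact: the visibility lower bound is the entire content of Theorem~\ref{thm:vis}, and the mechanism the paper uses to obtain it---discretizing norms and running Borsuk--Ulam against failure-to-bisect conditions parametrized by ellipsoids and their translates---is structurally different from (and not recoverable from) an a priori cell decomposition. Your zonoid/Santaló reformulation of $\Vis$ and the final Wongkew consistency check are fine as far as they go, but they are scaffolding around the missing step rather than progress toward it.
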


\subsection{Reduction to a global estimate}
\label{sec:vis}
Since we are mostly interested in the zero sets of polynomials, we will consider normalized polynomials.
Fix some norm on the space of all polynomials in $n$ variables with real coefficients, and let $\Poly$ denote the set of polynomials of degree at most $\Deg$ with norm $1$.
The set $\Poly$ is homeomorphic to the sphere of dimension $\binom{\Deg+n}{n} - 1 \sim \Deg^n$.

The measure $\sigma$ in Theorem~\ref{thm:vis:scaled} will be a mollification of a point measure.
Let
\begin{equation}
\label{eq:mu_p,Q}
\mu_{p,Q}
:=
\fint_{p'\in B(p,\epsilon)} N(Z_{p'}\cap Q),
\end{equation}
where $\epsilon>0$ will be a very small number to be chosen later (depending on the function $M$ in Theorem~\ref{thm:vis}), $B(p,\epsilon)$ is an $\epsilon$-ball on the sphere $\Poly$, and $\fint$ denotes the average with respect to the surface measure on $\Poly$.
We will omit the dependence of $\mu_{p,Q}$ on $\epsilon$ from the notation.
The mollification is necessary in order to ensure continuity of the map $p \mapsto \mu_{p,Q}$, which is needed in the Borsuk--Ulam theorem.
Specifically, continuity will be used in \eqref{eq:E-eventually-constant}.

We define the \emph{visibility} of a seminorm $\s$ on $\R^{n}$ by
\begin{equation}\label{Kdefn}
\Vis(\s) := (\vol K_{\s} )^{-1},
\quad
K_{\s} := \B \cap \B_{\s}.
\end{equation}
Since $K_{\s} \subseteq \B$, we always have $\Vis(\s) \gtrsim 1$.
The conventions for both visibility and $M$ in \cite{MR3019726} differ by a power $n$ from those in \cite{MR2746348}.
We use the conventions in \cite{MR2746348}.

The next result is the core of Guth's argument from \cite{MR2746348}.
Here and later we abbreviate
\[
\s_{p,Q} := \s_{\mu_{p,Q}},
\quad
K_{p,Q} := K_{\s_{p,Q}},
\quad
\Vis_{p,Q} := \Vis_{\s_{p,Q}}.
\]

\begin{theorem}[{\cite[Lemma 6.6]{MR2746348}}]\label{thm:vis}
There exists a dimensional constant $c=c(n)>0$ such that for every positive integer $\Deg$ and every function $M : \calQ \to [0,\infty)$ with
\begin{equation}
\label{eq:vis:degree}
\Bigl(\sum_{Q\in\calQ} M(Q)\Bigr)^{1/n} \leq c \Deg
\end{equation}
there exist an $\epsilon>0$ and a polynomial $p \in \Poly$ such that for every $Q \in \calQ$ we have
\begin{equation}
\label{eq:Vis>M}
\Vis_{p,Q} \geq M(Q).
\end{equation}
\end{theorem}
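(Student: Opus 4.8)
The plan is to construct $p$ by the Borsuk--Ulam theorem applied to $\Poly$, which is a sphere of dimension $N:=\binom{\Deg+n}{n}-1\sim\Deg^{n}$. For each fixed cube $Q$, the mollification in \eqref{eq:mu_p,Q} makes $p\mapsto\mu_{p,Q}$ --- and hence $p\mapsto\s_{p,Q}$, $p\mapsto K_{p,Q}$ and $p\mapsto\Vis_{p,Q}$ --- continuous on $\Poly$; these maps are moreover even, since $Z_{-p}=Z_{p}$.

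The crux is to reduce the pointwise bound $\Vis_{p,Q}\geq M(Q)$ to a statement about bisections of convex bodies, the cubes with $M(Q)\lesssim 1$ being trivial since $\Vis\gtrsim 1$ always. For each $Q$ with $M(Q)$ large set $\delta_{Q}:=M(Q)^{-1/n}$, and attach to $Q$ a finite family $\mathcal{B}_{Q}$ of convex bodies contained in $Q$ --- a suitably chosen multiscale collection of (possibly translated) sub-cubes of side lengths between $\sim\delta_{Q}$ and $1$ --- with $\#\mathcal{B}_{Q}\lesssim M(Q)$ and such that: whenever a polynomial $p$ \emph{bisects} every $B\in\mathcal{B}_{Q}$, meaning $\vol(B\cap\Set{p>0})=\vol(B\cap\Set{p<0})$, the directed volume satisfies $\s_{p,Q}(v)\gtrsim\delta_{Q}^{-1}\abs{v}$ for \emph{every} $v\in\R^{n}$. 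Granting this, $\B_{\s_{p,Q}}\subseteq C\delta_{Q}\B$, hence $K_{p,Q}\subseteq C\delta_{Q}\B$ and $\vol K_{p,Q}\lesssim\delta_{Q}^{n}=M(Q)^{-1}$, i.e.\ $\Vis_{p,Q}\gtrsim M(Q)$; rescaling the sub-cube side lengths (hence $\#\mathcal{B}_{Q}$) by a dimensional constant turns this into \eqref{eq:Vis>M}.

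It then remains to bisect all of these bodies with one polynomial. Consider
\[
\Phi:\Poly\to\R^{m},\qquad m:=\sum_{Q\in\calQ}\#\mathcal{B}_{Q},
\]
whose coordinate indexed by a pair $(Q,B)$ with $B\in\mathcal{B}_{Q}$ is $p\mapsto\vol(B\cap\Set{p>0})-\vol(B\cap\Set{p<0})$. Since $\Set{p=0}$ is a null set, $\Phi$ is continuous, and $\Phi(-p)=-\Phi(p)$ because $\Set{-p>0}=\Set{p<0}$. By \eqref{eq:vis:degree} together with $\#\mathcal{B}_{Q}\lesssim M(Q)$, choosing the dimensional constant $c=c(n)$ small enough yields $m\lesssim\sum_{Q}M(Q)\leq(c\Deg)^{n}\leq N$. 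An odd continuous map from $\Sphere^{N}$ into a Euclidean space of dimension $\leq N$ must vanish somewhere, so there is $p\in\Poly$ with $\Phi(p)=0$, i.e.\ $p$ bisects every $B$; by the previous step $\Vis_{p,Q}\geq M(Q)$ for every $Q$. Finally one fixes $\epsilon>0$ small enough that replacing $N(Z_{p}\cap Q)$ by its $\epsilon$-mollification $\mu_{p,Q}$ perturbs $\Vis_{p,Q}$ by less than the margin just gained, using here the continuity of $p\mapsto\mu_{p,Q}$ from the first paragraph.

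I expect the main obstacle to be the construction of the families $\mathcal{B}_{Q}$. No single convex body suffices: a hyperplane through a prescribed direction $v$ bisects any body while contributing nothing to the directed volume in direction $v$. Concretely, for equally spaced $c_{k}$ the polynomial $p(x)=\prod_{k}(x_{1}-c_{k})$, of degree $\delta_{Q}^{-1}\leq\Deg$, bisects every axis-parallel $\delta_{Q}$-sub-cube of a unit cube, yet the directed volume of its zero set in the direction $e_{2}$ is zero, since that zero set is a union of hyperplanes normal to $e_{1}$. Hence $\mathcal{B}_{Q}$ must be arranged so that, for any fixed direction $v$, no polynomial of degree $\leq\Deg$ can look cylindrical in the direction $v$ on more than a few of its members; this is Guth's argument for \cite[Lemma 6.6]{MR2746348}, in the streamlined form of \cite{MR3019726}, and it is exactly here that the degree bound is used in an essential way, forcing the directed volume to be large in \emph{every} direction rather than merely in most of them.
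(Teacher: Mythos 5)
There is a genuine gap, and you have in fact put your finger on it yourself in your last paragraph without supplying the remedy. The counterexample you give ($p(x)=\prod_k(x_1-c_k)$ of degree $\sim\delta_Q^{-1}\leq\Deg$ bisects a full grid of $\delta_Q$-sub-cubes but has zero directed area in the direction $e_2$) shows that no \emph{fixed} family $\mathcal{B}_Q$ of $\lesssim M(Q)$ convex bodies can have the property you postulate, namely that bisecting all of them forces $\s_{p,Q}(v)\gtrsim\delta_Q^{-1}\abs{v}$ in \emph{every} direction $v$. The paper's proof does not ask for this, and indeed cannot: the conclusion $\vol K_{p,Q}\lesssim M(Q)^{-1}$ does not require a uniform lower bound on $\s_{p,Q}$, only a lower bound in the $n$ directions of the principal axes of an ellipsoid comparable to $K_{p,Q}$ --- and that ellipsoid necessarily depends on $p$. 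The missing idea is precisely this adaptation: one first discretizes the space of centrally symmetric convex bodies into $D_n$ color classes $\mathcal{E}_\theta$ of ellipsoids (Lemma~\ref{ellipsoid}), assigns to each $p\in S(Q,\theta)$ the unique nearby $E(p,Q,\theta)\in\mathcal{E}_\theta$, and places $\sim\eta^{-n}\vol(E)^{-1}\lesssim M(Q)$ disjoint translates of $\eta E$ inside $Q$; bisecting all of these \emph{adapted} copies, by Corollary~\ref{cor:bisect}, forces $\s_{p,Q}$ to be large along the axes of $E\approx K_{p,Q}$ and gives a contradiction.

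This $p$-dependence of the bisected bodies is also why your choice of machinery is structurally wrong: you apply Borsuk--Ulam directly to an odd continuous map $\Phi:\Poly\to\R^m$ built from bisection deficits of fixed bodies, but once the bodies depend on $p$ through $E(p,Q,\theta)$, the coordinate functions of $\Phi$ are no longer continuous (the ellipsoid assignment jumps). The paper therefore works with the covering form of Borsuk--Ulam (Lemma~\ref{covering}), which places no topological hypothesis on the sets $A_i$, and uses the color classes $\mathcal{E}_\theta$ together with the $\epsilon$-mollification in \eqref{eq:mu_p,Q} to establish the antipodal separation \eqref{qw}: along a sequence $p_m\to p$ within a fixed $S(Q,\theta)$, continuity of $p\mapsto K_{p,Q}$ (guaranteed by mollification and Corollary~\ref{cor:tube}) forces $E(p_m,Q,\theta)$ to eventually equal $E(p,Q,\theta)$, which is the whole point of the color decomposition. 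So $\epsilon$ does not merely ``preserve the margin'' after the fact as you suggest; it is chosen in Lemma~\ref{manybisections} before the topological argument, and is essential to making that argument go through. In short: Borsuk--Ulam and bisection are the right tools, your dimension count is right, but you are missing the adapted-ellipsoid decomposition, the color classes that make it locally constant, and the covering form of Borsuk--Ulam that tolerates the resulting non-openness.
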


\begin{proof}[Proof of Theorem~\ref{thm:vis:scaled} assuming Theorem~\ref{thm:vis}]
Let
\[
p_{0}(x) := \prod_{c\in \mathbb{Z}+1/2 : \abs{c} \leq R+1} \prod_{i=1}^{n} (x_{i}-c).
\]
This is a polynomial of degree $\lesssim R$, and we have
\begin{equation}
\label{eq:p0-normal-measure}
\abs{ \< v, N(Z_{p_{0}}\cap Q) \> } \gtrsim \abs{v}
\end{equation}
for all $Q\in\calQ_{R}$ and all $v\in\R^{n}$.

Applying Theorem~\ref{thm:vis} with $D$ being a sufficiently large multiple of $R$ and
\[
\tilde{M}(Q) :=
\begin{cases}
R^{n}M(Q), & Q \in \calQ_{R},\\
0 & \text{otherwise,}
\end{cases}
\]
we obtain some $p\in\Poly$ and $\epsilon>0$ such that \eqref{eq:Vis>M} holds with $\tilde{M}$ in place of $M$.
Let $\sigma$ be the pushforward of the normalized measure on $B(p,\epsilon)$ under multiplication by $p_{0}$.
Since $\deg p_{0} \lesssim R$, the measure $\sigma$ is supported on a set of polynomials of degree $\lesssim R$.
Moreover, for every non-zero polynomial $p'$ we have
\[
N(Z_{p'p_{0}} \cap Q) = N(Z_{p'} \cap Q) + N(Z_{p_{0}} \cap Q).
\]
It follows from \eqref{eq:p0-normal-measure} that
\[
\s_{\sigma,Q}(v) \gtrsim \abs{v}
\]
for every $Q \in \calQ_{R}$ and every $v \in \R^{n}$, and this implies \eqref{eq:Bmu-subset-B}.

Furthermore, for $Q \in \calQ_{R}$ with $M(Q) \neq 0$ we have
\[
\B_{\s_{\sigma,Q}} \subseteq \B_{p,Q} \cap C\B,
\]
hence by \eqref{eq:Vis>M} we obtain
\[
\vol(\B_{\s_{\sigma,Q}})
\lesssim
\vol(C^{-1}\B_{p,Q} \cap \B)
\leq
\vol(K_{p,Q})
\leq
\tilde{M}(Q)^{-1},
\]
and this implies \eqref{eq:Vis>M:scaled}.
\end{proof}

\begin{remark}
The idea of replacing $p$ by $pp_{0}$ was introduced just after the formula (7.2) of \cite{MR2746348}.
A literal interpretation of \cite{MR2746348} suggests using $\sigma$ that is the normalized measure on $B(pp_{0},\epsilon)$.
In \cite[\textsection 4]{MR3019726} it was observed that this would require an additional continuity argument, and an alternative approach was given.
That alternative approach requires an application of Theorem~\ref{thm:vis} with $M$ replaced by $\lambda M$ with
\[
\lambda
\sim
\max_{Q\in \supp(M)} M(Q)^{-1}
\gtrsim
R^{n}.
\]
Consequently, it produces polynomials of a higher, and in general unbounded, degree.
The degree does not matter if the scaling condition \eqref{eq:BCCT-scaling-cond} holds, but it seems to become important in our more general situation.
The more convenient interpretation of the replacement $p \mapsto pp_{0}$ above, in which $\sigma$ is the normalized measure on $p_{0}B(p,\epsilon)$, was first made explicit in \cite[formula (6.6)]{MR3738255}, which in our notation corresponds to $\s_{\sigma,Q}$.
\end{remark}

Since for every polynomial $p\in\Poly$ and every cube $Q$ we have $\Vis_{p,Q} \gtrsim 1$, the condition \eqref{eq:Vis>M} has content only for those $Q$ with $M(Q) \gtrsim 1$.
Hence we may assume that $M$ is finitely supported and $M(Q)$ is either $0$ or $\gtrsim 1$ for every $Q$.

\subsection{The Borsuk--Ulam theorem and a covering lemma}\label{sec:BU}

The existence of a polynomial $p$ claimed in Theorem~\ref{thm:vis} can be equivalently stated by saying that the sets
\[
S(Q) := \Set{ p \in \Poly \given \Vis_{p,Q} \leq M(Q) }
\]
do not cover $\Poly$, provided that $\epsilon$ is sufficiently small.
In order to show this, we will write these sets as unions of sets to which Lemma~\ref{covering} below applies.

\begin{theorem}[Borsuk--Ulam]
\label{thm:borsuk+ulam}
Let $N \geq J$ and suppose that $F: \Sphere^N \to \R^J$ is a continuous function that satisfies $F(-x) = - F(x)$ for all $x \in \Sphere^N$.
Then there exists an $x \in \Sphere^N$ such that $F(x) = 0$.
\end{theorem}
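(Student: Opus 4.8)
The plan is to reduce the assertion to the classical square case $J=N$ and then derive a contradiction from the mod‑$2$ cohomology of real projective spaces. If $J<N$, compose $F$ with the linear embedding $\R^{J}\hookrightarrow\R^{N}$ that appends zero coordinates; this produces a continuous odd map $\tilde F:\Sphere^{N}\to\R^{N}$ whose first $J$ coordinates agree with $F$, so every zero of $\tilde F$ is a zero of $F$. Hence it suffices to treat $J=N$. Assume for contradiction that a continuous odd map $F:\Sphere^{N}\to\R^{N}$ has no zero; then $G:=F/\abs{F}:\Sphere^{N}\to\Sphere^{N-1}$ is continuous and odd, and it remains to show that no such map exists.

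For $N=1$ this is immediate: $\Sphere^{0}=\Set{\pm 1}$ is disconnected, so a continuous map $\Sphere^{1}\to\Sphere^{0}$ is constant, which is incompatible with $G(-x)=-G(x)$. For $N\geq 2$, pass to antipodal quotients: $G$ descends to a continuous map $\bar G:\mathbb{RP}^{N}\to\mathbb{RP}^{N-1}$ commuting with the double covers $\Sphere^{N}\to\mathbb{RP}^{N}$ and $\Sphere^{N-1}\to\mathbb{RP}^{N-1}$. By construction $x\mapsto(\pi(x),G(x))$ identifies $\Sphere^{N}$ with the pullback along $\bar G$ of the cover $\Sphere^{N-1}\to\mathbb{RP}^{N-1}$, and this pullback is connected; since a double cover of $\mathbb{RP}^{k}$ with $k\geq 1$ is connected iff it is classified by the nontrivial homomorphism $\pi_{1}(\mathbb{RP}^{k})\to\mathbb{Z}/2$, this forces $\bar G_{*}:\pi_{1}(\mathbb{RP}^{N})\to\pi_{1}(\mathbb{RP}^{N-1})$ to be an isomorphism $\mathbb{Z}/2\to\mathbb{Z}/2$. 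Dually, $\bar G^{*}$ sends the generator $\beta$ of $H^{1}(\mathbb{RP}^{N-1};\mathbb{Z}/2)$ to the generator $\alpha$ of $H^{1}(\mathbb{RP}^{N};\mathbb{Z}/2)$. Using $H^{*}(\mathbb{RP}^{N};\mathbb{Z}/2)=(\mathbb{Z}/2)[\alpha]/(\alpha^{N+1})$, $H^{*}(\mathbb{RP}^{N-1};\mathbb{Z}/2)=(\mathbb{Z}/2)[\beta]/(\beta^{N})$, and multiplicativity of $\bar G^{*}$, we obtain the contradiction
\[
0\neq\alpha^{N}=\bar G^{*}(\beta)^{N}=\bar G^{*}(\beta^{N})=\bar G^{*}(0)=0
\]
inside $H^{N}(\mathbb{RP}^{N};\mathbb{Z}/2)=\mathbb{Z}/2$.

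The only substantial inputs are the computation of the mod‑$2$ cohomology ring of $\mathbb{RP}^{k}$ and naturality of the cup product; everything else is elementary covering‑space theory and point‑set topology. The step I expect to require the most care is showing that $\bar G_{*}$ is nontrivial on $\pi_{1}$ — equivalently, that the pulled‑back double cover is connected — since this is precisely where the oddness of $G$ enters. Alternatively one could run the classical Lusternik–Schnirelmann covering argument, or give a self‑contained combinatorial proof via Tucker's lemma, but the cohomological route above is the shortest given standard algebraic topology; in any case, for the present paper it suffices to cite this as a well‑known theorem.
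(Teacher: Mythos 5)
The paper does not prove this theorem: it is quoted as a known fact, with the remark immediately following it referring the reader to Matoušek's book \cite{MR1988723}. So there is no internal proof to compare against. Your argument via the mod-$2$ cohomology ring of real projective space is the standard algebraic-topology proof and is correct: the reduction to $J=N$, the normalization $G=F/\abs{F}$, the identification of $\Sphere^{N}$ with the pullback along $\bar G$ of the double cover $\Sphere^{N-1}\to\mathbb{RP}^{N-1}$ (which shows $\bar G^{*}(\beta)=\alpha$ in $H^{1}(\,\cdot\,;\mathbb{Z}/2)$), and the ring contradiction $0\neq\alpha^{N}=\bar G^{*}(\beta^{N})=0$ are all sound. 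One small imprecision worth fixing: for $N=2$ you assert that $\bar G_{*}\colon\pi_{1}(\mathbb{RP}^{2})\to\pi_{1}(\mathbb{RP}^{1})$ is ``an isomorphism $\mathbb{Z}/2\to\mathbb{Z}/2$'', but $\pi_{1}(\mathbb{RP}^{1})\cong\mathbb{Z}$. In that case connectedness of the pullback already yields an immediate contradiction (there is no homomorphism $\mathbb{Z}/2\to\mathbb{Z}$ whose composite with reduction mod $2$ is nontrivial); alternatively, bypass $\pi_{1}$ and argue directly in $H^{1}(\,\cdot\,;\mathbb{Z}/2)\cong\mathrm{Hom}(\pi_{1},\mathbb{Z}/2)$, where $\bar G^{*}(\beta)=\alpha$ holds uniformly for all $N\geq 2$. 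You also correctly anticipated that a citation suffices here, which is exactly what the author does; the paper's own use of the theorem is only through the Lusternik--Schnirelmann-type covering statement, Lemma~\ref{covering}, which it does prove.
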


See \cite{MR1988723} for a discussion of applications and various equivalent forms of this theorem.
Some of these equivalent forms (known as Lusternik--Schnirelmann results) take the form of covering statements for the sphere.
We will use one such statement.

\begin{lemma}[{\cite[\textsection 5]{MR3019726}}]\label{covering}
Suppose that $A_i \subseteq \Sphere^N$ for $1 \leq i \leq J$, and suppose that for each $i$,
$A_i \cap (\overline{-A_i}) = \emptyset$.
If $J \leq N$, then the $2J$ sets $A_i$ and $-A_i$ do not cover $\Sphere^N$.
\end{lemma}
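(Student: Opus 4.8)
The plan is to derive a contradiction from the assumption that the $2J$ sets $A_i$ and $-A_i$ cover $\Sphere^N$, by constructing from these sets an odd continuous map $\Sphere^N \to \R^J$ with no zero and invoking Theorem~\ref{thm:borsuk+ulam}. First I would dispose of the trivial cases: if some $A_i$ is empty then so is $-A_i$, and these two sets may be discarded without affecting the covering hypothesis nor the inequality $J \leq N$; so we may assume every $A_i$ is nonempty (and, harmlessly, $J \geq 1$). I would also record the elementary observation that, since the antipodal map $x \mapsto -x$ is an isometric involution of $\Sphere^N$ and therefore commutes with closure, the hypothesis $A_i \cap \overline{-A_i} = \emptyset$ is equivalent to $(-A_i) \cap \overline{A_i} = \emptyset$.

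Next I would define, for each $1 \leq i \leq J$, the function $f_i : \Sphere^N \to \R$ by $f_i(x) := \dist(x, \overline{-A_i}) - \dist(x, \overline{A_i})$, using the (geodesic, or ambient Euclidean) distance on $\Sphere^N$. Each $f_i$ is continuous as a difference of distance functions to nonempty sets, and it is odd because the antipodal map, being an isometry, sends $\overline{A_i}$ to $\overline{-A_i}$: indeed $f_i(-x) = \dist(-x,\overline{-A_i}) - \dist(-x,\overline{A_i}) = \dist(x,\overline{A_i}) - \dist(x,\overline{-A_i}) = -f_i(x)$. Hence $F := (f_1,\dotsc,f_J) : \Sphere^N \to \R^J$ is continuous and odd, and since $J \leq N$, Theorem~\ref{thm:borsuk+ulam} yields a point $x_0 \in \Sphere^N$ with $F(x_0) = 0$, that is, $\dist(x_0,\overline{A_i}) = \dist(x_0,\overline{-A_i})$ for every $i$.

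Finally I would check that this $x_0$ lies in none of the $2J$ sets. If $x_0 \in A_i$ then $\dist(x_0,\overline{A_i}) = 0$, hence also $\dist(x_0,\overline{-A_i}) = 0$, so $x_0 \in \overline{-A_i}$; but then $x_0 \in A_i \cap \overline{-A_i} = \emptyset$, a contradiction. Symmetrically, if $x_0 \in -A_i$ then $x_0 \in \overline{A_i}$, contradicting $(-A_i) \cap \overline{A_i} = \emptyset$. Thus $x_0$ is covered by none of the $A_i$ or $-A_i$, contradicting the assumption and proving the lemma. I do not anticipate a serious obstacle: the one point deserving a little care is the asymmetric placement of the closures — the distances must be measured to the \emph{closures} of $A_i$ and $-A_i$ so that $f_i$ is genuinely continuous, while the hypothesis is applied in its stated form ``$A_i$ against $\overline{-A_i}$'' to rule out membership in $A_i$ and in its antipodal form to rule out membership in $-A_i$.
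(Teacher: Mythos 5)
Your proof is correct and follows essentially the same route as the paper's: define the odd continuous vector of signed distance differences, apply Borsuk--Ulam to find a zero, and check that zero avoids every $A_i$ and $-A_i$. One small aside: taking closures is not actually needed for continuity, since $x \mapsto \dist(x,A)$ is $1$-Lipschitz for any nonempty $A$ and $\dist(x,A)=\dist(x,\overline{A})$, so the paper's formulation with $d(x,\pm A_i)$ works equally well.
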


Note that no topological hypothesis on the sets $A_i$ is needed.
 
\begin{proof}
Without loss of generality we may assume that all sets $A_{i}$ are non-empty.
Define $F: \Sphere^N \to \R^J$ by $F_i(x) := d(x, -A_i) - d(x, A_i)$ for $1 \leq i \leq J$ and $x\in\Sphere^{N}$.
Then $F$ is continuous and $F(-x) = -F(x)$ for all $x$, so by Theorem~\ref{thm:borsuk+ulam} there is an $x \in \Sphere^{N}$ with $F(x) = F(-x) = 0$.
We claim that this $x$ does not belong to any $A_i$ or to $-A_i$.
Indeed, if $x \in A_i$, then $d(x, A_i) = 0$, hence also $d(x, -A_i) = F_{i}(x) + d(x,A_{i}) = 0$, hence $x \in \overline{-A_i}$, a contradiction.
If $x \in -A_{i}$, then we obtain a contradiction by the same argument with $x$ replaced by $-x$.
\end{proof}

\subsection{Discretization of the set of norms}
In this section we discretize the set of all norms on $\R^{n}$.
The argument follows \cite[p.~278]{MR2746348}.

Let $\mathcal{K}$ denote the set of centrally symmetric convex bodies in $\R^n$ with the metric
\[
d(K,L) := \log \inf \Set{ \alpha \geq 1 \given \alpha^{-1}K \subseteq L \subseteq \alpha K}.
\]
This metric is $GL(n)$-invariant.
Let $\mathcal{E} \subset \mathcal{K}$ denote the class of centered ellipsoids in $\R^n$, that is, images of the unit ball $\B$ under invertible linear maps $A\in GL(n)$.
Notice that $d(A(\B),\B) = \log\max (\norm{A},\norm{A^{-1}})$ for all $A\in GL(n)$.
Hence $\mathcal{E} \cong GL(n)/O(n)$ both in the topological group and the metric sense, where $d(A,B) = \log\max (\norm{A^{-1}B}, \norm{B^{-1}A})$ for $A,B\in GL(n)$.
It follows that the metric space $(\mathcal{E},d)$ is homogeneous and locally compact.

By the John ellipsoid theorem \cite{MR0030135}, the set $\mathcal{E}$ is $C_{n}/2$-dense in $\mathcal{K}$ for $C_{n}=(\log n)/2$.

It follows from local compactness that the cardinality of a $C_{n}/2$-separated subset of the ball $\mathcal{B} := \Set{ A\in GL(n) \given d(A,\operatorname{Id}) \leq 4C_{n}}$ is bounded by some $D_{n}<\infty$.
By homogeneity, the cardinality of a $C_{n}/2$-separated subset of \emph{every} ball of radius $4C_{n}$ in $\mathcal{E}$ is bounded by $D_{n}$.
Let $\mathcal{E}_{0} \subset \mathcal{E}$ be a maximal $C_{n}/2$-separated subset of ellipsoids with diameter at most $1$.
For each $\theta=1,2,\dotsc$ let inductively $\mathcal{E}_{\theta}$ be a maximal $4C_{n}$-separated subset of $\mathcal{E}_{0} \setminus \cup_{1\leq \theta'<\theta} \mathcal{E}_{\theta'}$.
Then
\begin{equation}
\label{eq:colours}
\mathcal{E}_{0} = \bigcup_{\theta\in\Theta} \mathcal{E}_{\theta},
\quad
\Theta := \Set{1,\dotsc,D_{n}}.
\end{equation}
Indeed, if $E\in\mathcal{E}_{0} \setminus \cup_{1\leq \theta' \leq \theta} \mathcal{E}_{\theta'}$, then $\mathcal{E}_{\theta}\cap B(E,4C_{n}) \neq \emptyset$, since otherwise we could add $E$ to $\mathcal{E}_{\theta}$, contradicting maximality.
This can only occur for at most $D_{n}-1$ values of $\theta$ for each $E\in\mathcal{E}_{0}$.
The partition \eqref{eq:colours} has the following properties.

\begin{lemma}\label{ellipsoid}
For every $K\in\mathcal{K}$ with $K \subset \B$, there exists $E\in\mathcal{E}_{0}$ such that $d(K,E)\leq C_{n}$.
On the other hand, for every $\theta\in\Theta$ and every $K\in\mathcal{K}$, there is at most one $E\in\mathcal{E}_{\theta}$ such that $d(K,E) < 2C_{n}$.
\end{lemma}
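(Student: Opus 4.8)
The plan is to treat the two assertions separately; the second is immediate and the first combines John's theorem with the standard principle that a maximal separated set is a net.

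\emph{Second assertion.} Fix $\theta\in\Theta$ and $K\in\mathcal{K}$, and suppose $E_{1},E_{2}\in\mathcal{E}_{\theta}$ both satisfy $d(K,E_{i})<2C_{n}$. Then $d(E_{1},E_{2})\leq d(E_{1},K)+d(K,E_{2})<4C_{n}$, and since $\mathcal{E}_{\theta}$ was chosen to be $4C_{n}$-separated this forces $E_{1}=E_{2}$. So at most one element of $\mathcal{E}_{\theta}$ lies within $d$-distance $2C_{n}$ of $K$.

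\emph{First assertion.} Given $K\in\mathcal{K}$ with $K\subset\B$, I would first produce an ellipsoid $E'$ lying in the pool over which $\mathcal{E}_{0}$ is a maximal $C_{n}/2$-separated subset and close to $K$. By the John ellipsoid theorem there is an ellipsoid at $d$-distance $\leq C_{n}/2$ from $K$; since $K\subset\B$, this ellipsoid is a bounded rescaling of the John ellipsoid of $K$ and hence may be taken of diameter at most $1$ while keeping $d(K,E')$ controlled by a dimensional multiple of $C_{n}$. Maximality of $\mathcal{E}_{0}$ then yields some $E\in\mathcal{E}_{0}$ with $d(E',E)\leq C_{n}/2$, since otherwise $\mathcal{E}_{0}\cup\{E'\}$ would still be $C_{n}/2$-separated. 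The triangle inequality gives $d(K,E)\leq d(K,E')+d(E',E)\lesssim C_{n}$, which with the constant conventions in force is the asserted bound.

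Neither half presents a genuine obstacle: each is just the triangle inequality together with, respectively, the $4C_{n}$-separation of $\mathcal{E}_{\theta}$ and the maximality of $\mathcal{E}_{0}$ (plus John's theorem). The one point deserving care is, in the first part, the reduction to an approximating ellipsoid of diameter at most $1$: one must check that restricting the net $\mathcal{E}_{0}$ to ellipsoids of bounded diameter costs only an $O(C_{n})$ loss, which is exactly where the hypothesis $K\subset\B$ enters.
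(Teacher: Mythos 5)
Your proof of the second assertion is exactly right and is the only reasonable one: if $E_{1},E_{2}\in\mathcal{E}_{\theta}$ both lie within $d$-distance $2C_{n}$ of $K$, the triangle inequality gives $d(E_{1},E_{2})<4C_{n}$, contradicting the $4C_{n}$-separation of $\mathcal{E}_{\theta}$ unless $E_{1}=E_{2}$.

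For the first assertion, the paper states the lemma without proof, as an immediate consequence of John's theorem together with the ``maximal separated set is a net'' principle, and that is precisely the route you take. You also correctly put your finger on the one genuinely delicate point: John's theorem (after rescaling the John ellipsoid $E_{J}$ of $K$ by $n^{1/4}$ to the geometric-mean position) yields an ellipsoid $E'$ with $d(K,E')\leq C_{n}/2$, but since $E_{J}\subseteq K\subseteq\B$ the diameter of $E'$ can be as large as $2n^{1/4}$, so $E'$ need not belong to the pool ``ellipsoids of diameter at most $1$'' over which $\mathcal{E}_{0}$ was chosen maximal. Your remedy of shrinking $E'$ to diameter $1$ and absorbing the extra $\log(2n^{1/4})$ into the constant is sound, but then the conclusion you obtain is $d(K,E)\leq C_{n}'$ for some dimensional constant $C_{n}'>C_{n}$, not literally $d(K,E)\leq C_{n}$ as the lemma asserts. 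The cleaner repair, which recovers the stated bound verbatim, is to read the ``diameter at most $1$'' in the definition of $\mathcal{E}_{0}$ as ``diameter at most $2n^{1/4}$'' (or any fixed dimensional bound $\geq 2n^{1/4}$): then $E'$ itself lies in the pool, maximality produces $E\in\mathcal{E}_{0}$ with $d(E',E)<C_{n}/2$, and $d(K,E)<C_{n}$ follows directly. Either reading is harmless for the rest of the paper's argument in Section~\ref{sec:translates}, since all that is used later is that these thresholds are fixed dimensional constants and that $\eta$ may be chosen small depending on them; but it is worth being explicit that the constant ``$1$'' in the definition of $\mathcal{E}_{0}$ and the constant ``$C_{n}$'' in the lemma cannot simultaneously be taken at face value.
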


Finally, given $Q \in \supp M$, let
\[
S(Q,\theta) := \Set{ p \in S(Q) \given d(K_{p,Q}, \mathcal{E}_{\theta}) \leq C_{n}},
\]
so that
\begin{equation}\label{colours}
S(Q) = \bigcup_{\theta \in \Theta} S(Q,\theta).
\end{equation}

\subsection{Bisecting balls}
The following fact is a consequence of the isoperimetric inequality.
\begin{lemma}[{\cite[Appendix A]{MR3019726}}]
\label{lem:bisect}
Let $p : \R^{n} \to \R$ be a polynomial,
\[
a := \frac{\abs{\Set{x \in \B \given p(x)>0}}}{\abs{\B}},
\quad
b := \frac{\abs{\Set{x \in \B \given p(x)<0}}}{\abs{\B}}.
\]
Then
\[
\mathcal{H}_{n-1}(Z_{p} \cap \B) \geq \frac12 (a^{(n-1)/n} + b^{(n-1)/n} - 1) \mathcal{H}_{n-1}(\mathbb{S}^{n-1}).
\]
\end{lemma}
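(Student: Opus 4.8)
The plan is to deduce this relative isoperimetric estimate from the sharp Euclidean isoperimetric inequality, applied \emph{separately} to the two open sets
\[
U_{+} := \Set{x \in \B \given p(x) > 0}, \qquad U_{-} := \Set{x \in \B \given p(x) < 0},
\]
and then to add the two resulting bounds. Set $\omega_{n} := \abs{\B}$, so that $\abs{U_{+}} = a\omega_{n}$, $\abs{U_{-}} = b\omega_{n}$, and $\mathcal{H}_{n-1}(\mathbb{S}^{n-1}) = n\omega_{n}$. We may assume $\mathcal{H}_{n-1}(Z_{p}\cap\B) < \infty$ (otherwise there is nothing to prove) and that $Z_{p}$ does not contain $\mathbb{S}^{n-1}$; the degenerate cases $p\equiv 0$ and $\mathbb{S}^{n-1}\subseteq Z_{p}$ are checked directly --- in the latter $p$ is divisible by $\abs{x}^{2}-1$, which has a fixed sign on $\B$, so the assertion reduces to one of smaller degree.

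The key step is a decomposition of the perimeters of $U_{\pm}$. The topological boundary of $U_{+}$ is contained in $(Z_{p}\cap\overline{\B})\cup\mathbb{S}^{n-1}$, which has finite $\mathcal{H}_{n-1}$-measure, so $U_{+}$ has finite perimeter, with $\operatorname{Per}(U_{+}) = \mathcal{H}_{n-1}(\partial^{*}U_{+})$ where $\partial^{*}U_{+}$ denotes its measure-theoretic boundary. A point of $\partial^{*}U_{+}$ lying in the interior of $\B$ belongs to $Z_{p}$ by continuity of $p$, while a point of $\partial^{*}U_{+}$ lying on $\mathbb{S}^{n-1}$ satisfies $p\geq 0$; since $Z_{p}\cap\mathbb{S}^{n-1}$ is a proper algebraic subset of $\mathbb{S}^{n-1}$, hence $\mathcal{H}_{n-1}$-null, this yields
\[
\operatorname{Per}(U_{+}) \leq \mathcal{H}_{n-1}(Z_{p}\cap\B) + \mathcal{H}_{n-1}\bigl(\mathbb{S}^{n-1}\cap\Set{p>0}\bigr),
\]
and symmetrically $\operatorname{Per}(U_{-}) \leq \mathcal{H}_{n-1}(Z_{p}\cap\B) + \mathcal{H}_{n-1}\bigl(\mathbb{S}^{n-1}\cap\Set{p<0}\bigr)$.

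Then I would invoke the sharp isoperimetric inequality $\operatorname{Per}(E)\geq n\omega_{n}^{1/n}\abs{E}^{(n-1)/n}$, which gives $\operatorname{Per}(U_{+})\geq a^{(n-1)/n}\mathcal{H}_{n-1}(\mathbb{S}^{n-1})$ and $\operatorname{Per}(U_{-})\geq b^{(n-1)/n}\mathcal{H}_{n-1}(\mathbb{S}^{n-1})$. Adding these and combining with the two perimeter decompositions above,
\[
\bigl(a^{(n-1)/n}+b^{(n-1)/n}\bigr)\mathcal{H}_{n-1}(\mathbb{S}^{n-1}) \leq 2\,\mathcal{H}_{n-1}(Z_{p}\cap\B) + \mathcal{H}_{n-1}\bigl(\mathbb{S}^{n-1}\cap\Set{p>0}\bigr) + \mathcal{H}_{n-1}\bigl(\mathbb{S}^{n-1}\cap\Set{p<0}\bigr).
\]
The last two terms are measures of disjoint subsets of $\mathbb{S}^{n-1}$, so their sum is at most $\mathcal{H}_{n-1}(\mathbb{S}^{n-1})$; subtracting this quantity and dividing by $2$ gives the stated inequality.

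I expect the main obstacle to be the perimeter bookkeeping in the second step: verifying that $U_{\pm}$ are sets of finite perimeter and that their measure-theoretic boundaries are captured, up to $\mathcal{H}_{n-1}$-null sets, by $Z_{p}$ together with the relevant spherical caps. An alternative is to perturb $p$ (via Sard's theorem) so that $Z_{p}$ becomes smooth and transverse to $\mathbb{S}^{n-1}$, argue classically via the divergence theorem for the resulting piecewise-smooth domains, and pass to the limit --- but controlling $\mathcal{H}_{n-1}(Z_{p}\cap\B)$ along such a perturbation is itself delicate, so the argument via sets of finite perimeter appears more robust.
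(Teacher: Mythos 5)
Your argument is correct, and since the paper only cites this lemma (from Carbery--Valdimarsson, Appendix A) without reproducing a proof, there is no in-paper proof to compare against; your approach --- apply the sharp isoperimetric inequality to $U_{+}$ and $U_{-}$ separately, bound each perimeter by $\mathcal{H}_{n-1}(Z_{p}\cap\B)$ plus the measure of the corresponding open spherical cap, add, and subtract $\mathcal{H}_{n-1}(\mathbb{S}^{n-1})$ --- is exactly the cited one, and the factor $\tfrac12$ in the statement is the signature of that two-sided application. One minor simplification: for a nonzero polynomial $p$ the set $Z_{p}$ is a proper real algebraic variety, so $\mathcal{H}_{n-1}(Z_{p}\cap\B)<\infty$ automatically and there is no infinite case to discard; and your perimeter bookkeeping via the essential boundary, together with the observation that $Z_{p}\cap\mathbb{S}^{n-1}$ is $\mathcal{H}_{n-1}$-null once $\mathbb{S}^{n-1}\not\subseteq Z_{p}$ (which you reduce to via divisibility by $\abs{x}^{2}-1$), is the robust way to handle the step you correctly flag as the crux.
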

An affine invariant formulation of Lemma~\ref{lem:bisect} is the following.
\begin{corollary}[{\cite[p.~1659]{MR3019726}}]
\label{cor:bisect}
Let $E\subset \R^{n}$ be an ellipsoid with principal axes $v_{1},\dotsc,v_{n}$ and $p : \R^{n} \to \R$ a polynomial that approximately bisects $E$ in the sense that
\[
\abs{\Set{x \in E \given p(x)>0}} > c\abs{E},
\quad
\abs{\Set{x \in E \given p(x)<0}} > c\abs{E}
\]
for some $c>0$.
Then
\begin{equation}
\label{eq:bisect-area-affinv}
\sum_{j=1}^{n} \abs{\<v_{j},N(Z_{p} \cap Q)\>} \gtrsim \vol(E).
\end{equation}
\end{corollary}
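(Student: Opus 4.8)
The plan is to transport Lemma~\ref{lem:bisect} from the round ball $\B$ to the ellipsoid $E$ via the linear map $A\in GL(n)$ determined by $Ae_j=v_j$, where $e_1,\dots,e_n$ is the standard basis; then $A\B=E$ and $\abs{\det A}=\vol(E)/\vol(\B)$. I read the domain in the statement as $E$ itself: since $N(Z_p\cap\Omega)\geq N(Z_p\cap E)$ as measures whenever $E\subseteq\Omega$, the bound for $E$ implies the bound for any set containing $E$ (in particular a cube).

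First I would transfer the bisection hypothesis. Setting $q:=p\circ A$ one has $Z_q=A^{-1}Z_p$, and because $\abs{\det A}\vol(\B)=\vol(E)$ the numbers $a:=\abs{\Set{x\in\B\given q(x)>0}}/\vol(\B)$ and $b:=\abs{\Set{x\in\B\given q(x)<0}}/\vol(\B)$ satisfy $a,b>c$; moreover $a+b=1$ because the zero set of the nonzero polynomial $q$ is Lebesgue null. As $t\mapsto t^{(n-1)/n}$ is strictly concave on $[0,1]$ and lies strictly above $t\mapsto t$ on $(0,1)$, the continuous function $t\mapsto t^{(n-1)/n}+(1-t)^{(n-1)/n}-1$ is strictly positive on $(0,1)$, hence bounded below by a constant $\delta=\delta(n,c)>0$ on the compact interval $[c,1-c]\ni a$. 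Lemma~\ref{lem:bisect} applied to $q$ then yields $\mathcal{H}_{n-1}(Z_q\cap\B)\geq\frac{\delta}{2}\mathcal{H}_{n-1}(\mathbb{S}^{n-1})$, a quantity bounded below by a constant depending only on $n$ and $c$.

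The main step is to compare $\sum_j\abs{\<v_j,N(Z_p\cap E)\>}$ with $\mathcal{H}_{n-1}(Z_q\cap\B)$ through $A$, which restricts to a diffeomorphism between the smooth loci of $Z_q\cap\B$ and $Z_p\cap E$. For this I would use two elementary linear-algebra identities at a smooth point $x\in Z_q$ with unit normal $w:=N_xZ_q$: the image hyperplane $A(T_xZ_q)$ has unit normal in the direction of $A^{-\top}w$, and the $(n-1)$-dimensional Jacobian of $A|_{T_xZ_q}$ equals $\abs{\det A}\cdot\abs{A^{-\top}w}$; both are read off by expanding $\abs{\det A}=\abs{Au_1\wedge\dots\wedge Au_{n-1}\wedge Aw}$ for an orthonormal basis $u_1,\dots,u_{n-1}$ of $T_xZ_q$ (the wedge annihilates the component of $Aw$ tangent to $A(T_xZ_q)$). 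Changing variables $y=Ax$ in the directional-area integral and using $\<v_j,A^{-\top}w\>=\<A^{-1}v_j,w\>=\<e_j,w\>$, the normalization factor $\abs{A^{-\top}w}$ of the unit normal cancels against the Jacobian, leaving $\abs{\<v_j,N(Z_p\cap E)\>}=\abs{\det A}\int_{Z_q\cap\B}\abs{\<e_j,N_xZ_q\>}\dif\mathcal{H}_{n-1}(x)$. Summing over $j$ and using $\sum_{j=1}^n\abs{\<e_j,w\>}\geq\abs{w}=1$ for the unit normal (the $\ell^1$--$\ell^2$ inequality) gives $\sum_j\abs{\<v_j,N(Z_p\cap E)\>}\geq\abs{\det A}\,\mathcal{H}_{n-1}(Z_q\cap\B)$.

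Combining the last two displays with $\abs{\det A}=\vol(E)/\vol(\B)$ proves $\sum_j\abs{\<v_j,N(Z_p\cap E)\>}\gtrsim\vol(E)$, with an implied constant depending only on $n$ and $c$, which is the assertion. I expect the only genuine obstacle to be the bookkeeping of the two ways $A$ distorts the directional-area integral — the rotation of the normal and the stretching of the surface measure — and checking that the awkward factor $\abs{A^{-\top}w}$ cancels; once that is in hand the proof reduces to the isoperimetric input of Lemma~\ref{lem:bisect} together with elementary estimates.
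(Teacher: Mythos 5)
The paper does not give a proof of Corollary~\ref{cor:bisect}; it is stated with a citation to Carbery--Valdimarsson \cite[p.~1659]{MR3019726}, immediately following Lemma~\ref{lem:bisect}. Your argument supplies the missing deduction and is correct: you transport the problem to the round ball via the linear map $A$ with $Ae_j=v_j$, observe that the bisection hypothesis pulls back (since $\abs{\det A}\vol(\B)=\vol(E)$ and $Z_q$ is Lebesgue-null), use compactness of $[c,1-c]$ to get a quantitative $\delta(n,c)>0$ from Lemma~\ref{lem:bisect}, and then push the area estimate back. The change-of-variables bookkeeping is right: $N_{Ax}Z_p$ points along $A^{-\top}w$, the $(n-1)$-Jacobian is $\abs{\det A}\abs{A^{-\top}w}$, and the normalization factor $\abs{A^{-\top}w}$ cancels, giving $\abs{\<v_j,N(Z_p\cap E)\>}=\abs{\det A}\int_{Z_q\cap\B}\abs{\<e_j,N_xZ_q\>}\,\dif\mathcal{H}_{n-1}$. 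Summing $j$ and applying $\sum_j\abs{\<e_j,w\>}\geq\abs{w}$ completes it. You also correctly read the $Q$ in \eqref{eq:bisect-area-affinv} as a typographical slip for $E$ (or any set containing $E$, since $N(Z_p\cap\cdot)$ is monotone in the region); this matches how the corollary is used in the proof of Lemma~\ref{manybisections}, where it is applied with $E_{\alpha,Q}\subset Q$ in place of $Q$. This is the standard affine-invariance argument and is presumably what the cited reference carries out; there is no genuinely different route here, just a proof that the paper chose to omit.
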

Recall that the left-hand side of \eqref{eq:bisect-area-affinv} was defined in \eqref{eq:s_mu}.

\subsection{Translates}\label{sec:translates}
For $p\in S(Q,\theta)$, let $E(p,Q,\theta)$ be the unique ellipsoid in $\mathcal{E}_{\theta}$ such that $d(K_{p,Q},E(p,Q,\theta)) \leq C_{n}$.
Let $0<\eta\ll 1$ be a parameter depending only on $n$ to be chosen later.

For each $E \in \mathcal{E}_0$ and $Q\in\calQ$, we can fit $\floor{c_{n} \eta^{-n} \vol(E)^{-1}}$ disjoint translates of $\eta E$ inside $Q$ (here we use that $E$ has diameter $\leq 1$).
We make a choice of such disjoint translates for every $E$ and denote them by $E_{\alpha,Q}$ with $1 \leq \alpha \leq \floor{c_{n} \eta^{-n} \vol(E)^{-1}}$.

A polynomial $p \not\equiv 0$ is said to \emph{bisect} a set $E'$ if $\abs{E' \cap \Set{p>0}} = \abs{E' \cap \Set{p<0}}$.
Let
\[
S(Q,\theta,\alpha) 
:= \begin{cases}
\Set{ p\in S(Q,\theta) \given p \text{ does not bisect } E(p,Q,\theta)_{\alpha,Q} }
&\text{if } \alpha \leq c_{n} \eta^{-n} \vol(E)^{-1},\\
\emptyset & \text{otherwise.}
\end{cases}
\]
\begin{lemma}[{\cite[\textsection 8]{MR3019726}}]
\label{manybisections}
There exist $\eta=\eta(n)>0$ and $\epsilon=\epsilon(n,M)>0$ such that for all $Q \in \calQ$ with $M(Q) \gtrsim 1$ and all $\theta\in\Theta$ we have
\[
S(Q,\theta) = \bigcup_{\alpha \lesssim M(Q)} S(Q,\theta,\alpha).
\]
\end{lemma}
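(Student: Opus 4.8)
The plan is to show that for each $p \in S(Q,\theta)$ there are at least $\gtrsim$ [something like a constant fraction of $c_n\eta^{-n}\vol(E)^{-1}$, which is $\gtrsim M(Q)^{-1}\cdot\dots$; actually we only need existence of \emph{one} non-bisected translate to start, but to get the index range $\alpha \lesssim M(Q)$ we argue by contradiction] many indices $\alpha$ with $p \in S(Q,\theta,\alpha)$, i.e. for which $p$ fails to bisect the translate $E(p,Q,\theta)_{\alpha,Q}$. Write $E := E(p,Q,\theta) \in \mathcal{E}_\theta$, so $d(K_{p,Q},E) \leq C_n$ and in particular $\vol(E) \sim \vol(K_{p,Q}) = \Vis_{p,Q}^{-1} \geq M(Q)^{-1}$ by the definition of $S(Q)$ (using $\Vis_{p,Q} \leq M(Q)$), hence the number of translates $E_{\alpha,Q}$ fitted inside $Q$ is $\floor{c_n \eta^{-n}\vol(E)^{-1}} \lesssim \eta^{-n} M(Q)$. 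Thus \emph{every} nonempty $S(Q,\theta,\alpha)$ already forces $\alpha \lesssim M(Q)$ with the implied constant depending only on $n$ (absorbing $\eta^{-n}$, which is dimensional), so the displayed union is automatically over $\alpha \lesssim M(Q)$; the real content is that the $S(Q,\theta,\alpha)$ actually \emph{cover} $S(Q,\theta)$, i.e. that for each such $p$ at least one of the translates is not bisected.

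Here is the mechanism. Suppose, for contradiction, that some $p \in S(Q,\theta)$ bisects \emph{all} of the disjoint translates $E_{\alpha,Q}$ of $\eta E$ inside $Q$, where $E = E(p,Q,\theta)$. Since $p$ bisects $E_{\alpha,Q}$, it in particular approximately bisects it in the sense of Corollary~\ref{cor:bisect}: writing $v_1^{\alpha},\dots,v_n^{\alpha}$ for the principal axes of $E_{\alpha,Q}$ (these are $\eta$ times the principal axes of $E$, up to translation), we get $\sum_{i=1}^n \abs{\<v_i^{\alpha}, N(Z_p \cap Q)\>} \gtrsim \vol(E_{\alpha,Q}) = \eta^n \vol(E)$. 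The point is that the $Z_p \cap E_{\alpha,Q}$ are disjoint pieces of $Z_p \cap Q$, so summing the stronger, localized version of Lemma~\ref{lem:bisect} over the $\floor{c_n\eta^{-n}\vol(E)^{-1}}$ translates and using additivity of surface measure over disjoint regions yields, for the principal axes $v_1,\dots,v_n$ of $E$ itself,
\[
\s_{p,Q}(v_i) = \abs{\<v_i, N(Z_p\cap Q)\>} \gtrsim \eta^{-1}\cdot \eta^n \vol(E)^{-1} \cdot \eta^n \vol(E) \cdot (\text{something}),
\]
i.e. after bookkeeping the $\eta$'s and the count of translates, $\s_{p,Q}(v_i) \gtrsim 1$ for each $i$ — wait, more carefully: each translate contributes $\gtrsim \eta^n\vol(E)$ spread among the $n$ axis directions of that translate, there are $\sim \eta^{-n}\vol(E)^{-1}$ translates, the axis directions are all parallel to the fixed axes of $E$ up to scaling by $\eta$, so dividing out the length $\eta$ we obtain $\sum_i \s_{p,Q}(v_i/\abs{v_i}) \gtrsim \eta^{-1}$, hence some normalized axis direction $u$ of $E$ has $\s_{p,Q}(u) \gtrsim \eta^{-1}/n$. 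This makes $\B_{\s_{p,Q}}$, hence $K_{p,Q} = \B \cap \B_{\s_{p,Q}}$, very thin in the direction $u$: $K_{p,Q}$ is contained in a slab of width $\lesssim \eta$. But $d(K_{p,Q}, E) \leq C_n$ forces $K_{p,Q}$ to have comparable extent in every axis direction of $E$, including $u$ — its extent in direction $u$ is $\gtrsim_n \vol(E)^{1/n}/(\text{diam})$, which is bounded below independently of $\eta$ once we recall $\vol(E) \gtrsim M(Q)^{-1} \gtrsim 1$ (we reduced to $M(Q)\gtrsim 1$ at the end of Section~\ref{sec:vis}) and $\diam E \leq 1$. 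Choosing $\eta = \eta(n)$ small enough that $\eta$ is smaller than this lower bound yields the contradiction, and the mollification parameter $\epsilon = \epsilon(n,M)$ is taken small enough that the closeness $d(K_{p,Q},E)\leq C_n$ and the bisection hypotheses are stable under the $\epsilon$-averaging in \eqref{eq:mu_p,Q} (this is where the dependence of $\epsilon$ on $M$, through the finite support of $M$ and the finitely many $Q$, $\theta$, $\alpha$ and ellipsoids $E$ involved, enters).

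The main obstacle is the bookkeeping in the previous paragraph: one must use a \emph{localized} version of the isoperimetric estimate of Lemma~\ref{lem:bisect} — applied to each small ellipsoid $E_{\alpha,Q}$ rather than to all of $\B$ — add the contributions using that the translates are disjoint and $N(Z_p\cap Q)$ is additive over disjoint regions of $Z_p$, and correctly track the two competing powers of $\eta$ (one from the number $\sim\eta^{-n}\vol(E)^{-1}$ of translates, one from each translate having volume $\eta^n\vol(E)$, and one more from rescaling principal-axis lengths back to unit length) so that the net directional area in some axis direction of $E$ grows like $\eta^{-1}$ and overwhelms the $O_n(1)$ upper bound coming from $d(K_{p,Q},E)\leq C_n$. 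Everything else — the continuity needed to pass the mollification through, and the reduction $M(Q)\gtrsim 1$ — has already been set up in the preceding subsections.
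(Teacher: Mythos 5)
Your setup is correct and matches the paper's through the application of Corollary~\ref{cor:bisect} to each translate $E_{\alpha,Q}$, the summation over disjoint translates, the averaging in $p'$ that is needed to pass to the mollified $\s_{p,Q}$, and the counting argument showing every nonempty $S(Q,\theta,\alpha)$ has $\alpha \lesssim M(Q)$ (absorbing the dimensional factor $\eta^{-n}$). The gap is in the final step, and it comes from normalizing the principal axes. After summing you have, for the \emph{unnormalized} axes $v_1,\dots,v_n$ of $E$,
\[
\eta \sum_{j=1}^{n} \s_{p,Q}(v_{j}) \gtrsim 1.
\]
You then pass to $\s_{p,Q}(v_j/\abs{v_j})$ and try to conclude that $K_{p,Q}$ is too thin in one axis direction. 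That requires a lower bound on the semi-axis lengths of $E$ that is independent of $\eta$ and of $M$, and here the argument breaks: you write ``$\vol(E) \gtrsim M(Q)^{-1} \gtrsim 1$'', but the second inequality is backwards — the reduction at the end of Section~\ref{sec:vis} gives $M(Q) \gtrsim 1$, hence $M(Q)^{-1} \lesssim 1$, and indeed $M(Q)$ (equivalently $\tilde M(Q)$ in the application to Theorem~\ref{thm:vis:scaled}) can be as large as $\sim R^{n}$, so $\vol(E)$ and the short semi-axis of $E$ can be arbitrarily small. Thus the ``thin slab'' conclusion $\eta \gtrsim \abs{v_i}$ is not, by itself, contradictory; and if you chase it through you only get $\eta \gtrsim M(Q)^{-1}$, which would force $\eta$ to depend on $M$, defeating the purpose. (Separately, the quantity $\vol(E)^{1/n}/\diam$ is not the extent of $E$ in its short axis direction; the right bound from $\diam E \leq 1$ is $l_i \gtrsim_n \vol(E)$, but this doesn't rescue the argument for the reason above.)

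The fix — which is exactly the step where the paper's proof diverges from yours — is to \emph{not} normalize. Since $v_j \in \partial E$ and $e^{-C_n}E \subseteq K_{p,Q} \subseteq \B_{\s_{p,Q}}$, you immediately have $\s_{p,Q}(v_j) \leq e^{C_n}$ for every $j$. Feeding this into $\eta \sum_j \s_{p,Q}(v_j) \gtrsim 1$ gives $\eta \, n \, e^{C_n} \gtrsim 1$, i.e.\ $\eta \gtrsim_n 1$, which is the desired contradiction once $\eta = \eta(n)$ is chosen small enough. The affine invariance of Corollary~\ref{cor:bisect} is precisely what makes the unnormalized axes the natural objects here; normalizing them discards the information that $v_j$ already lies on the unit sphere of (a dilate of) $\s_{p,Q}$.
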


Lemma~\ref{manybisections} states that each $p\in S(Q,\theta)$ does not bisect some $E_{\alpha,Q}$ with $E=E(p,Q,\theta)$.
In fact, one can obtain the stronger conclusion that $p$ does not even approximately bisect some $E_{\alpha,Q}$.

\begin{proof}
Let $\eta=\eta(n)$ to be chosen later.
Let $\epsilon = \epsilon(n,M)$ be so small that for all $Q\in\supp M$ and all $p,p'\in\Poly$ we have
\[
\dist(p,p') < \epsilon
\implies
\abs{(\Set{p>0} \Delta \Set{p'>0})\cap Q}
\leq
c' \eta^{n} M(Q)^{-1},
\]
where $c'$ is a small dimensional constant.
Such $\epsilon$ exists because the function $(p,p')\mapsto \abs{(\Set{p>0} \Delta \Set{p'>0})\cap Q}$ is continuous by the dominated convergence theorem, and therefore uniformly continuous since $\Poly$ is compact.

Let $Q \in \supp M$, $p\in S(Q,\theta)$, and $E:=E(p,Q,\theta)$.
Suppose for a contradiction that $p$ bisects $E_{\alpha,Q}$ for each $\alpha \leq c_{n}\eta^{-n}\vol(E)^{-1} \lesssim \eta^{-n} M(Q)$.

Let $v_{1},\dotsc,v_{n} \in \R^{n}$ be the principal axes of $E$ (\emph{not} normalized to unit length), so that each $E_{\alpha,Q}$ has principal axes $\eta v_{1},\dotsc,\eta v_{n}$.
If $p$ bisects $E_{\alpha,Q}$ and $p'\in\Poly$ is another polynomial $\epsilon$-close to $p$, then $p'$ still approximately bisects $E_{\alpha,Q}$ by the choice of $\epsilon$.
By Corollary~\ref{cor:bisect}, for each such $p'$ we have
\[
\sum_{j=1}^{n} \abs{ \<\eta v_j, N(Z_{p'} \cap E_{\alpha,Q})\>}
\gtrsim
\vol(E_{\alpha,Q})
\sim
\eta^{n} \vol E.
\]
Averaging in $p'$ and summing in $\alpha$, we obtain
\[
\sum_{j=1}^n \s_{p,Q}(\eta v_{j})
\gtrsim
\floor{c_{n}\eta^{-n}\vol(E)^{-1}} \eta^{n} \vol E
\]
with the implicit constant independent of $\eta$.
Since $d(E,K_{p,Q}) \leq C_{n}$, we have $\s_{p,Q}(v_{j}) \sim 1$, and it follows that $\eta \gtrsim 1$.
This leads to a contradiction if $\eta$ is small enough.
\end{proof}

\subsection{Antipodes}\label{Antipodes}
Fix $Q \in \supp M$, $\theta \in \Theta$, and $\alpha \geq 1$.
Since any $p\in S(Q,\theta,\alpha)$ does not bisect $E' := E(p,Q,\theta)_{\alpha,Q}$, we have either
\[
\vol (\Set{p>0} \cap E') > \vol (\Set{p<0 } \cap E'),
\]
in which case we say that $p \in S(Q,\theta,\alpha,+)$, or 
\[
\vol (\Set{p>0} \cap E') < \vol (\Set{p<0 } \cap E'),
\]
in which case we say that $p \in S(Q,\theta,\alpha,-)$.
In particular
\[
S(Q,\theta,\alpha) = S(Q,\theta,\alpha,+) \cup S(Q,\theta,\alpha,-)
\]
and
\begin{equation}\label{opposite}
S(Q,\theta,\alpha,+) = - S(Q,\theta,\alpha,-).
\end{equation}
It remains to show that
\begin{equation}\label{qw}
S(Q,\theta,\alpha,+) \cap \overline{S(Q,\theta,\alpha,-)} = \emptyset,
\end{equation}
where the closure is taken in the natural topology of $\Poly$.

To this end, suppose for a contradiction that $p$ lies in the intersection \eqref{qw}.
Then $p \in S(Q,\theta,\alpha,+)$, and there exists a sequence of $p_m \in S(Q,\theta,\alpha,-)$ that converges to $p$ in $\Poly$.
Since the function $(p,v)\mapsto \s_{p,Q}(v)$ is jointly continuous from $\Poly \times \R^{n}$ to $[0,\infty)$ (thanks to the mollification by $\epsilon$ in \eqref{eq:mu_p,Q} and the uniform bound on $\abs{N(Z_{p'}\cap Q)}$ given by Corollary~\ref{cor:tube}), the function $p\mapsto K_{p,Q}$ is continuous from $\Poly$ to $\mathcal{K}$.
It follows that
\begin{equation}
\label{eq:E-eventually-constant}
E(p_{m},Q,\theta)=E(p,Q,\theta)=:E
\end{equation}
for sufficiently large $m$.
Indeed, since $p,p_m\in S(Q,\theta)$, the sets $K_{p_m,Q}$ and $K_{p,Q}$ must be close to \emph{some} member of $\mathcal{E}_\theta$ and thus, for $m$ sufficiently large, they are close to the \emph{same} member of $\mathcal{E}_{\theta}$.
This is why we need several collections $\mathcal{E}_{\theta}$.

With $E$ given by \eqref{eq:E-eventually-constant}, we have
\begin{equation}\label{last}
\vol(\Set{p>0} \cap E_{\alpha,Q}) > \vol(\Set{p<0 } \cap E_{\alpha,Q}),
\end{equation}
and for sufficiently large $m$ we obtain
\begin{equation}\label{last:m}
\vol(\Set{p_m> 0} \cap E_{\alpha,Q}) < \vol(\Set{p_m <0 } \cap E_{\alpha,Q}).
\end{equation}
By the dominated convergence theorem this leads to a contradiction.

\subsection{Conclusion of the proof of Theorem~\ref{thm:vis}}
We have decomposed
\begin{equation}\label{union}
\bigcup_{Q\in \supp M} S(Q)
=
\bigcup_{Q\in \supp M} \bigcup_{\theta\in\Theta} \bigcup_{\alpha \lesssim M(Q)}
\left(S(Q,\theta,\alpha,+) \cup S(Q,\theta,\alpha,-) \right),
\end{equation}
where $S(Q,\theta,\alpha,+)$ and $S(Q,\theta,\alpha,-)$ are antipodal by \eqref{opposite} and separated by \eqref{qw}.
The union consists of $\lesssim \sum_{Q} M(Q)$ terms, so by Lemma~\ref{covering} it does not cover $\Poly$ provided $\Deg^{n} \sim \dim\Poly \gtrsim \sum_{Q} M(Q)$, which is the case by the hypothesis \eqref{eq:vis:degree}.

\section{Proof of Theorem~\ref{thm:main}}
\label{sec:prf}

\subsection{Multilinear duality}
Let us abbreviate
\begin{equation}
\label{eq:G}
G(Q) := \norm{ \BL(\widevec{T_{x_{j}}H_{j}}, \vec p, (1,R))^{-1/P} }_{\Fotimes_{j=1}^{m} L^{P/p_{j}}_{x_{j}}(H_{j} \cap Q)}^{P}.
\end{equation}
The following formulation of \eqref{eq:main} goes back to \cite[Proposition 2]{MR3019726}.
It motivated the development of multilinear factorization theory in \cite{arxiv:1809.02449}.
\begin{proposition}
\label{prop:multilinear-duality}
For any $0<C_{1},C_{2}<\infty$, the estimate
\begin{equation}
\label{eq:main:G}
\sum_{Q\in\calQ_{R}} G(Q)
\leq C_{1}^{P}C_{2}^{P}
\prod_{j=1}^{m} (\deg H_{j})^{p_{j}}
\end{equation}
is equivalent to the following statement:

For every function $M : \calQ_{R} \to [0,\infty)$ satisfying
\begin{equation}
\label{eq:tensor:norm}
\sum_{Q\in\calQ_{R}} M(Q) = 1,
\end{equation}
there exist functions $S_{j} : \calQ_{R} \to [0,\infty)$ such that
\begin{equation}
\label{eq:tensor:prod}
G(Q) M(Q)^{P-1}
\leq C_{1}^{P}
\prod_{j=1}^{m} S_{j}(Q)^{p_{j}}
\quad\text{for every } Q\in\calQ_{R}
\end{equation}
and
\begin{equation}
\label{eq:tensor:sum}
\sum_{Q \in \calQ_{R}} S_{j}(Q)
\leq C_{2}
\deg H_{j}.
\end{equation}
\end{proposition}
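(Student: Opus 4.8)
The plan is to prove the equivalence by purely arithmetic means: the statement involves only the nonnegative reals $G(Q)$ ($Q\in\calQ_R$), the degrees $\deg H_j$, and the exponents $p_j$, and both implications reduce to two applications of H\"older's inequality linked by an elementary rescaling. No minimax or Hahn--Banach argument is needed, because the extremal configurations on both sides of the equivalence are explicit. The only input beyond arithmetic is the finiteness $\sum_{Q\in\calQ_R}G(Q)<\infty$, which holds since $\calQ_R$ is finite and each $G(Q)$ is finite (the Brascamp--Lieb constant is bounded below and $H_j\cap Q$ has finite surface measure).

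\textbf{From the factorization statement to \eqref{eq:main:G}.} I would apply the factorization statement to the admissible weight $M(Q):=G(Q)/\sum_{Q'\in\calQ_R}G(Q')$. Then \eqref{eq:tensor:prod} rearranges to $G(Q)\leq \bigl(\sum_{Q'}G(Q')\bigr)^{(P-1)/P}\, C_1 \prod_{j=1}^m S_j(Q)^{p_j/P}$ for every $Q$; summing over $Q\in\calQ_R$, applying H\"older's inequality with the exponents $P/p_j$ (conjugate, as $\sum_j p_j/P=1$), and invoking \eqref{eq:tensor:sum}, we get
\[
\sum_{Q} G(Q) \leq \Bigl(\sum_{Q'} G(Q')\Bigr)^{(P-1)/P}\, C_1 C_2 \prod_{j=1}^m (\deg H_j)^{p_j/P},
\]
which rearranges to \eqref{eq:main:G}.

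\textbf{From \eqref{eq:main:G} to the factorization statement.} Here I would use two reductions. The first is a rescaling observation: the constraint \eqref{eq:tensor:prod} is unchanged under $S_j\mapsto\lambda_j S_j$ whenever $\prod_j\lambda_j^{p_j}=1$, and choosing the $\lambda_j$ by a weighted arithmetic--geometric-mean balancing (so that $\lambda_j$ is proportional to $\deg H_j/\sum_Q S_j(Q)$, normalized to $\prod_j\lambda_j^{p_j}=1$) shows that the single bound $\prod_{j=1}^m\bigl(\sum_Q S_j(Q)\bigr)^{p_j}\leq C_2^P\prod_j(\deg H_j)^{p_j}$ implies the individual bounds \eqref{eq:tensor:sum}; the reverse implication is immediate. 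Hence the factorization statement is equivalent to: for every admissible $M$ there exist $S_j\geq 0$ obeying \eqref{eq:tensor:prod} with $\prod_{j=1}^m\bigl(\sum_Q S_j(Q)\bigr)^{p_j}\leq C_2^P\prod_j(\deg H_j)^{p_j}$. The second reduction solves this optimization explicitly: for fixed $M$, the minimum of $\prod_{j=1}^m\bigl(\sum_Q S_j(Q)\bigr)^{p_j}$ over all $S_j\geq 0$ satisfying \eqref{eq:tensor:prod} is attained at the symmetric choice $S_j(Q):=\bigl(G(Q)M(Q)^{P-1}/C_1^P\bigr)^{1/P}$ and equals $C_1^{-P}\bigl(\sum_Q G(Q)^{1/P}M(Q)^{(P-1)/P}\bigr)^P$, the matching lower bound being once more H\"older's inequality with exponents $P/p_j$. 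Combining the two reductions, the factorization statement is equivalent to
\[
\sum_{Q\in\calQ_R} G(Q)^{1/P}M(Q)^{(P-1)/P} \leq C_1 C_2 \prod_{j=1}^m (\deg H_j)^{p_j/P} \qquad \text{for every $M\geq 0$ with $\sum_{Q}M(Q)=1$.}
\]
Finally, one further application of H\"older's inequality, with the conjugate pair $P$ and $P/(P-1)$, shows that the left-hand side is maximized over such $M$ at $M(Q)=G(Q)/\sum_{Q'}G(Q')$, with maximal value $\bigl(\sum_Q G(Q)\bigr)^{1/P}$; so this family of inequalities holds precisely when $\bigl(\sum_Q G(Q)\bigr)^{1/P}\leq C_1 C_2\prod_j(\deg H_j)^{p_j/P}$, which is \eqref{eq:main:G}.

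The step I expect to demand the most care is the bookkeeping in the rescaling and in the two extremal choices: one must treat the degenerate cases $G(Q)=0$ and $M(Q)=0$ separately, and check the boundary exponent $P=1$ by hand (there $M(Q)^{P-1}\equiv 1$, the optimization over $M$ becomes vacuous, and the first H\"older application trivializes). These checks are routine; the real content of the proposition is the recognition that it amounts to two uses of H\"older's inequality glued together by the rescaling lemma.
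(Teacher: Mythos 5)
Your proof is correct, and the forward implication (factorization $\Rightarrow$ \eqref{eq:main:G}) is the same as the paper's: take $M(Q)=G(Q)/\sum_{Q'}G(Q')$, sum \eqref{eq:tensor:prod} over $Q$, apply H\"older with exponents $P/p_j$, and invoke \eqref{eq:tensor:sum}. For the converse, the paper simply writes down the ansatz $S_j(Q)=C_2\deg H_j\cdot G(Q)^{1/P}M(Q)^{1-1/P}\big/\sum_{Q'}G(Q')^{1/P}M(Q')^{1-1/P}$ and checks the two conditions, with \eqref{eq:tensor:prod} following from H\"older with exponents $P$ and $P/(P-1)$ together with \eqref{eq:main:G}. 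You instead derive essentially the same choice by solving two nested extremal problems: first you minimize $\prod_j(\sum_Q S_j(Q))^{p_j}$ subject to \eqref{eq:tensor:prod} using the rescaling freedom and the equality case of H\"older, then you maximize the resulting expression over $M$. This buys a cleaner logical structure (a chain of genuine equivalences rather than two one-sided verifications) and makes the extremal configurations explicit, at the price of more bookkeeping. Two small points to tighten if you were to write this out in full: the rescaling lemma requires $\sum_Q S_j(Q)>0$ for every $j$, so you should observe that otherwise $G(Q)M(Q)^{P-1}\equiv 0$ and the degenerate $S_j\equiv 0$ already works; and your finiteness claim for $\sum_{Q\in\calQ_R}G(Q)$ is asserted as an external fact about surface measures, whereas the paper gets it for free by applying the hypothesis \eqref{eq:tensor:prod} with $M(Q')=\one_{Q'=Q}$, which keeps the argument self-contained.
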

The advantage of this formulation is that a suitable choice of $S_{j}$, see \eqref{eq:Sj}, allows one to separation the rather geometric arguments used to establish \eqref{eq:tensor:prod} in Section~\ref{sec:intersection-multiplicity} from the rather algebraic arguments used to establish \eqref{eq:tensor:sum} in Section~\ref{sec:wedge}.

\begin{proof}[Proof of Proposition~\ref{prop:multilinear-duality}]
Suppose that there exist $S_{j}$ satisfying \eqref{eq:tensor:prod} and \eqref{eq:tensor:sum}; we want to show \eqref{eq:main:G}.
For every $Q\in \calQ_{R}$, applying \eqref{eq:tensor:prod} with $M(Q')=\one_{Q'=Q}$, we see that $G(Q)$ is finite.

Without loss of generality we may assume $\mathcal{G} := \sum_{Q \in \calQ_{R}} G(Q) > 0$, so that we can define $M(Q) := \one_{Q\in\calQ_{R}} G(Q)/\mathcal{G}$.
Then, by \eqref{eq:tensor:prod}, H\"older's inequality, and \eqref{eq:tensor:sum}, we obtain
\begin{align*}
\mathcal{G}
&=
\Bigl(\mathcal{G}^{-(P-1)/P} \sum_{Q \in \calQ_{R}} G(Q)\Bigr)^{P}
\\ &=
\Bigl( \sum_{Q \in \calQ_{R}} G(Q)^{1/P} M(Q)^{(P-1)/P} \Bigr)^{P}
\\ &\leq C_{1}^{P}
\Bigl( \sum_{Q \in \calQ_{R}} \prod_{j=1}^{m} S_{j}(Q)^{p_{j}/P} \Bigr)^{P}
\\ &\leq C_{1}^{P}
\prod_{j=1}^{m} \bigl( \sum_{Q \in \calQ_{R}} S_{j}(Q) \bigr)^{p_{j}}
\\ &\leq C_{1}^{P} C_{2}^{P}
\prod_{j=1}^{m} \bigl( \deg H_{j} \bigr)^{p_{j}}.
\end{align*}
The converse (that \eqref{eq:main:G} implies the existence of appropriate $S_{j}$'s) can be established with the ansatz $S_{j}(Q)=C_{2} S(Q) \deg H_{j}$ and
\[
S(Q) := \frac{G(Q)^{1/P} M(Q)^{1-1/P}}{\sum_{Q' \in \calQ_{R}} G(Q')^{1/P} M(Q')^{1-1/P}}.
\]
Then \eqref{eq:tensor:sum} is clearly satisfied, and \eqref{eq:tensor:prod} holds because by H\"older's inequality and \eqref{eq:main:G} we have
\begin{align*}
\Bigl( \sum_{Q \in \calQ_{R}} G(Q)^{1/P} M(Q)^{1-1/P} \Bigr)^{P}
& \leq
\Bigl( \sum_{Q \in \calQ_{R}} G(Q) \Bigr)
\Bigl( \sum_{Q \in \calQ_{R}} M(Q) \Bigr)^{P-1}
\\ &\leq
C_{1}^{P} C_{2}^{P} \prod_{j=1} (\deg H_{j})^{p_{j}}.
\qedhere
\end{align*}
\end{proof}

We will verify the conditions in Proposition~\ref{prop:multilinear-duality} in the setting of Theorem~\ref{thm:main}.
To this end let $M : \calQ_{R} \to [0,\infty)$ be an arbitrary function with $\sum_{Q\in\calQ_{R}} M(Q)=1$.
Theorem~\ref{thm:vis:scaled}, applied to this function $M$, gives a probability measure $\sigma$ supported on polynomials of degree $\lesssim R$ and satisfying \eqref{eq:Vis>M:scaled} and \eqref{eq:Bmu-subset-B}.

Let $\mu_{Q} := \mu_{\sigma,Q}$ (recall the definition \eqref{eq:normal-measure}) and define
\begin{equation}
\label{eq:Sj}
S_{j}(Q) := R^{-k_{j}} \abs[\Big]{\<T_{Q}H_{j}, \mu_{Q}^{\wedge k_{j}}\>}.
\end{equation}
Theorem~\ref{thm:main} will follow from Proposition~\ref{prop:multilinear-duality} if we can verify \eqref{eq:tensor:prod} and \eqref{eq:tensor:sum} with the chosen function $M$, the functions $S_{j}$ given by \eqref{eq:Sj}, and the function $G$ defined in \eqref{eq:G}.
In the remaining part of Section~\ref{sec:prf} we verify these conditions.

\subsection{Intersection multiplicity estimate}
\label{sec:intersection-multiplicity}
In order to verify \eqref{eq:tensor:sum} we use B\'ezout's theorem and the following change of variables.
\begin{lemma}[{\cite[Theorem 5.2]{arxiv:1510.09132}}]
\label{lem:intersection}
Let $Z_{1},\dotsc,Z_{m}$ be smooth submanifolds of $\R^{n}$ with $\sum_{j} (n-\dim Z_{j}) = n$.
Let also $U\subset (\R^{n})^{m-1}$ be measurable.
Then
\begin{multline}
\label{eq:intersection}
\int_{Z_{1}\times\dotsm\times Z_{m}} \one_{U}(x_{2}-x_{1},\dotsc,x_{m}-x_{1}) \abs[\Big]{\bigwedge_{j=1}^{m} N_{x_{j}}Z_{j}} \dif(x_{1},\dotsc,x_{m})
\leq\\
\int_{U} \abs{Z_{1}\cap (Z_{2}+v_{2}) \cap \dotsb \cap (Z_{m}+v_{m})} \dif (v_{2},\dotsc,v_{m}),
\end{multline}
where $\abs{\cdot}$ on the right-hand side denotes cardinality of the intersection (with multiplicity).
\end{lemma}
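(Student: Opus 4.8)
The plan is to deduce the lemma from the area formula, i.e.\ the change‑of‑variables formula for $C^{1}$ maps between Riemannian manifolds of equal dimension. Consider the smooth map
\[
\Phi\colon Z_{1}\times\dotsm\times Z_{m}\to(\R^{n})^{m-1},\qquad
\Phi(x_{1},\dotsc,x_{m}):=(x_{2}-x_{1},\dotsc,x_{m}-x_{1}).
\]
The hypothesis $\sum_{j}(n-\dim Z_{j})=n$ says precisely that $\dim(Z_{1}\times\dotsm\times Z_{m})=\sum_{j}\dim Z_{j}=mn-n=(m-1)n=\dim(\R^{n})^{m-1}$, so $\Phi$ maps between manifolds of the same dimension, and the area formula gives, for every nonnegative measurable $g$ on the product,
\[
\int_{Z_{1}\times\dotsm\times Z_{m}}g(x)\,J\Phi(x)\dif x=\int_{(\R^{n})^{m-1}}\sum_{x\in\Phi^{-1}(v)}g(x)\dif v,
\]
where $J\Phi(x)=\sqrt{\det\bigl(D\Phi_{x}\circ D\Phi_{x}^{*}\bigr)}$ is the absolute Jacobian and $\dif x$ is the product surface measure (which is the $(m-1)n$-dimensional Hausdorff measure of the product manifold). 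A point of the fiber $\Phi^{-1}(v_{2},\dotsc,v_{m})$ is a tuple $(x_{1},x_{1}+v_{2},\dotsc,x_{1}+v_{m})$ with $x_{1}\in Z_{1}\cap(Z_{2}-v_{2})\cap\dotsm\cap(Z_{m}-v_{m})$, so $\lvert\Phi^{-1}(v)\rvert$ is the cardinality of this intersection, which is at most the intersection number counted with multiplicity. (The translates appear here with the sign opposite to that in \eqref{eq:intersection}; the two are interchanged by the measure-preserving reflection $v\mapsto-v$, equivalently by replacing $\one_{U}(x_{2}-x_{1},\dotsc)$ by $\one_{U}(x_{1}-x_{2},\dotsc)$, which is immaterial.)

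The heart of the matter is the pointwise identity
\[
J\Phi(x)=\Bigl\lvert\,\bigwedge_{j=1}^{m}N_{x_{j}}Z_{j}\,\Bigr\rvert\qquad\text{wherever every }Z_{j}\text{ is smooth,}
\]
each $N_{x_{j}}Z_{j}=(T_{x_{j}}Z_{j})^{\perp}$ being equipped with its normalized volume form; note that both sides vanish exactly when $\bigcap_{j}T_{x_{j}}Z_{j}\neq\{0\}$, so the identity includes the degenerate case. Writing $T_{j}:=T_{x_{j}}Z_{j}$, the linearization $D\Phi_{x}\colon\bigoplus_{j}T_{j}\to(\R^{n})^{m-1}$ is $(\dot x_{1},\dotsc,\dot x_{m})\mapsto(\dot x_{2}-\dot x_{1},\dotsc,\dot x_{m}-\dot x_{1})$. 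This is a finite linear-algebra computation: choosing orthonormal bases of the $T_{j}$ and expanding the $(m-1)n\times(m-1)n$ determinant by the generalized Laplace rule along the $m-1$ factors $\R^{n}$, only the terms survive in which the $s$-th factor is spanned by the basis of $T_{s+1}$ for every $s$; what is left over is the generalized Jacobian of the map $T_{1}\to\bigoplus_{j\geq2}N_{j}$, $a\mapsto(\pi_{N_{2}}a,\dotsc,\pi_{N_{m}}a)$, which evaluates to $\lvert N_{1}\wedge\dotsm\wedge N_{m}\rvert$. (Alternatively, factor $\Phi$ through $(\R^{n})^{m}/\Delta\cong\Delta^{\perp}$, $\Delta$ the diagonal, and use that the Jacobian of an orthogonal projection onto an equidimensional subspace is the absolute value of the inner product of the corresponding unit multivectors; this identifies $J\Phi(x)$, up to an $(m,n)$-dependent constant that one computes to be $1$, with $\lvert\langle\omega_{T_{1}}\wedge\dotsm\wedge\omega_{T_{m}},\star\omega_{\Delta}\rangle\rvert=\lvert N_{1}\wedge\dotsm\wedge N_{m}\rvert$.) I would verify this first for $m=2$ and then in general.

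Granting the identity, take $g:=\one_{U}(x_{2}-x_{1},\dotsc,x_{m}-x_{1})=\one_{U}\circ\Phi$. Then the left-hand side of \eqref{eq:intersection} equals $\int g(x)\,J\Phi(x)\dif x$, which by the area formula is $\int_{U}\lvert\Phi^{-1}(v)\rvert\dif v$, and by the fiber description and the cardinality-versus-multiplicity bound of the first paragraph this is at most the right-hand side of \eqref{eq:intersection} (up to the sign of the $v_{j}$ noted there). The step I expect to be the main obstacle is the Jacobian identity; beyond it one needs only the standard facts that the area formula applies to $C^{1}$ maps of equidimensional manifolds—so measurability is automatic, and an infinite or positive-dimensional fiber $\Phi^{-1}(v)$, which by Sard's theorem occurs only for $v$ in a Lebesgue-null set, does no harm—and that the cardinality of a set never exceeds an intersection number counted with multiplicity.
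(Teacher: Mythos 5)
Your proof takes essentially the same route as the paper: identify $\bigl\lvert\bigwedge_{j} N_{x_{j}}Z_{j}\bigr\rvert$ with the Jacobian of $\Phi(x_{1},\dotsc,x_{m})=(x_{2}-x_{1},\dotsc,x_{m}-x_{1})$ restricted to $Z_{1}\times\dotsm\times Z_{m}$, apply the area formula, and bound the fiber cardinality by the intersection count with multiplicity. The paper's proof of the Jacobian identity is the Hodge-star computation $\star\bigl(\bigwedge_{l}\bigwedge_{j\geq 2}(\dif x_{j,l}-\dif x_{1,l})\bigr)=\pm\bigwedge_{l}\sum_{j}\dif x_{j,l}$, which is a precise version of your second (factor through $\Delta^{\perp}$) sketch --- the normalizing factor $m^{n/2}$ appears on both sides and cancels --- while your first (Laplace-expansion) sketch is loosely worded: the $s$-th factor $\R^{n}$ is not ``spanned by the basis of $T_{s+1}$,'' rather one decomposes $\R^{n}=T_{s+1}\oplus N_{s+1}$ to expose a block-triangular structure, but the underlying idea matches.
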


\begin{corollary}[{cf.\ \cite[Lemma 3.1]{MR2746348}}]
\label{cor:tube}
Let $p$ be a non-zero polynomial in $n$ variables.
Then for every cube $Q \subset \R^{n}$ with side length $1$ we have
\[
\abs{N(Z_{p}\cap Q)} \lesssim \deg p.
\]
\end{corollary}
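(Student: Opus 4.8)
The plan is to deduce the bound from B\'ezout's theorem and the change of variables in Lemma~\ref{lem:intersection}, applied with $m=2$ to the smooth locus of $Z_p$ together with a coordinate line. Write $Z_1$ for the smooth locus of $Z_p$, a smooth hypersurface; by definition $N(Z_p\cap Q)$ is the pushforward of the surface measure $\mathcal{H}_{n-1}$ restricted to $Z_1\cap Q$ under $x\mapsto N_xZ_1$, and since the normal vectors are unit vectors we have $\abs{N(Z_p\cap Q)}=\mathcal{H}_{n-1}(Z_1\cap Q)$. Using $\sum_{i=1}^{n}\abs{\<e_i,N_xZ_1\>}\geq\abs{N_xZ_1}=1$ for the standard basis $e_1,\dotsc,e_n$ of $\R^{n}$, we obtain
\[
\abs{N(Z_p\cap Q)}=\mathcal{H}_{n-1}(Z_1\cap Q)\leq\sum_{i=1}^{n}\int_{Z_1\cap Q}\abs{\<e_i,N_xZ_1\>}\dif\mathcal{H}_{n-1}(x)=\sum_{i=1}^{n}\s_{N(Z_p\cap Q)}(e_i),
\]
so it is enough to bound each directional area $\s_{N(Z_p\cap Q)}(e_i)$ by $\lesssim\deg p$. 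After a translation we may assume $Q$ is centered at the origin, and since for $n=1$ the claim is immediate from B\'ezout we may assume $n\geq 2$.

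Fix $i$ and apply Lemma~\ref{lem:intersection} with $m=2$, the hypersurface $Z_1$ above (of codimension $1$), the line $Z_2:=\R e_i$ (of codimension $n-1$, so that $\codim Z_1+\codim Z_2=n$), and $U:=B(0,2\sqrt{n})\subset\R^{n}$. The normal $(n-1)$-vector to $Z_2$ is $\star e_i$, so by the defining property of the Hodge star $\abs{N_{x_1}Z_1\wedge\star e_i}=\abs{\<e_i,N_{x_1}Z_1\>}$. For each fixed $x_1\in Z_1\cap Q$ the integral over $x_2\in\R e_i$ of $\one_U(x_2-x_1)$ equals the length of $\Set{t\in\R\given te_i-x_1\in B(0,2\sqrt{n})}$, which is $\gtrsim 1$ because $\abs{x_1}\leq\sqrt{n}/2$. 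Hence the left-hand side of \eqref{eq:intersection} is $\gtrsim\s_{N(Z_p\cap Q)}(e_i)$.

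It remains to bound the right-hand side of \eqref{eq:intersection}. For $v_2\in\R^{n}$ the affine line $Z_2+v_2=\R e_i+v_2$ meets $Z_1$ in the smooth points of $Z_p$ lying on that line, and the restriction of $p$ to this line is a polynomial of degree at most $\deg p$ in the $i$-th coordinate. This restriction vanishes identically only for $v_2$ in a Lebesgue-null set, namely those $v_2$ whose coordinates other than the $i$-th lie in the common zero locus of the $x_i$-coefficients of $p$, a proper algebraic subset of $\R^{n-1}$ because $p\neq 0$. For all other $v_2$, B\'ezout's theorem gives $\abs{Z_1\cap(Z_2+v_2)}\leq\deg p$ counted with multiplicity, so the right-hand side of \eqref{eq:intersection} is at most $\deg p\cdot\abs{B(0,2\sqrt{n})}\lesssim\deg p$. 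Combining this with the previous paragraph yields $\s_{N(Z_p\cap Q)}(e_i)\lesssim\deg p$, and summing over $i=1,\dotsc,n$ completes the proof.

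The step that needs the most care is the treatment of the exceptional lines, i.e.\ checking that the parameters $v_2$ for which $p$ vanishes on all of $\R e_i+v_2$ form a Lebesgue-null set, so that the right-hand side of \eqref{eq:intersection} stays finite and the B\'ezout count applies almost everywhere. A secondary, routine point is verifying that the intersection multiplicity at a smooth point of $Z_p$ of the line $\R e_i+v_2$ equals the multiplicity of the corresponding root of the restricted univariate polynomial, which keeps the bound $\abs{Z_1\cap(Z_2+v_2)}\leq\deg p$ valid; this is a local computation at smooth points.
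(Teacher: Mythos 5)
Your proposal is correct and follows essentially the same route as the paper: both reduce to bounding the directional area $\abs{\<v,N(Z_{p}\cap Q)\>}$ for fixed unit $v$, apply Lemma~\ref{lem:intersection} with $m=2$, $Z_{1}=Z_{p}$, $Z_{2}$ an affine line in direction $v$, and a bounded $U$ chosen so that the indicator $\one_{U}(x_{2}-x_{1})$ integrates to $\gtrsim 1$ along the line, and then bound the right-hand side via B\'ezout. You spell out a few steps the paper leaves implicit (the reduction $\abs{N(Z_{p}\cap Q)}\leq\sum_{i}\s_{N(Z_{p}\cap Q)}(e_{i})$, the null-set of exceptional lines, the local multiplicity check), but this is the same argument.
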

\begin{proof}[Proof of Corollary~\ref{cor:tube} assuming Lemma~\ref{lem:intersection}]
It suffices to show that
\begin{equation}
\label{eq:dir-surface-in-cube}
\abs{\<v,N(Z_{p}\cap Q)\>} \lesssim \deg p
\end{equation}
for every unit vector $v\in\R^{n}$.
We apply Lemma~\ref{lem:intersection} with $m=2$, $Z_{1}=Z_{p}$, $Z_{2}$ being the line parallel to $v$ passing through the center of $Q$, and $U$ being the cube with side length $2$ centered at $0$.
Then the left-hand side of \eqref{eq:dir-surface-in-cube} is dominated by the left-hand side of \eqref{eq:intersection}.
On the other hand, the integrand on the right-hand side of \eqref{eq:intersection} is at most $\deg p$ for almost every $v_{2} \in \R^{n}$.
This implies \eqref{eq:dir-surface-in-cube}.
\end{proof}

\begin{proof}[Proof of Lemma~\ref{lem:intersection}]
We will identify $\abs{\wedge_{j} N_{x_{j}}Z_{j}}$ as the Jacobian of the change of coordinates
\[
(\R^{n})^{m} \supset Z_{1}\times\dotsm\times Z_{m} \to (\R^{n})^{m-1}, \quad
(x_{1},\dotsc,x_{n}) \mapsto (x_{2}-x_{1},\dotsc,x_{m}-x_{1}).
\]
To this end we note that
\[
\star \Bigl( \bigwedge_{l=1}^{n} \bigwedge_{j=2}^{m} (\dif x_{j,l}-\dif x_{1,l} ) \Bigr)
= \pm 1
\bigwedge_{l=1}^{n} \sum_{j=1}^{m} \dif x_{j,l},
\]
where $\star$ is the Hodge star operator, and where the sign depends on the choice of orientation in the definition of $\star$.
Indeed, both sides are simple wedge products, and each $1$-form on the right-hand side is orthogonal to each $1$-form on the left-hand side.
Also, one can verify that the norms (induced by the Euclidean norm) of the forms on both sides are equal to $m^{n/2}$.
From this follows equality up to the sign, see \cite[Appendix A]{MR3089762} for a formal proof.

Denoting by $T_{j}$ the normalized volume form on the tangent space $T_{x_{j}}Z_{j}$ and by $\pi_{j} : (\R^{n})^{m} \to \R^{n}$ the $j$-th coordinate projection, the Jacobian can be computed as
\begin{align*}
\abs[\Big]{ \< \bigwedge_{j=1}^{m} \pi_{j}^{*} T_{j}, \bigwedge_{l=1}^{n} \bigwedge_{j=2}^{m} (\dif x_{j,l}-\dif x_{1,l} ) \> }
&=
\abs[\Big]{ \< \star \bigwedge_{j=1}^{m} \pi_{j}^{*} T_{j}, \bigwedge_{l=1}^{n} \sum_{j=1}^{m} \dif x_{j,l} \> }\\
&=
\abs[\Big]{ \< \bigwedge_{j=1}^{m} \pi_{j}^{*} (\star T_{j}), \bigwedge_{l=1}^{n} \sum_{j=1}^{m} \dif x_{j,l} \> }\\
&=
\abs[\Big]{ \< \bigwedge_{j=1}^{m} (\star T_{j}), \bigwedge_{l=1}^{n} \dif x_{l} \> }\\
&=
\abs[\Big]{ \bigwedge_{j=1}^{m} (\star T_{j}) }.
\end{align*}
Indeed, to see the second identity we notice that
\[
\star \bigwedge_{j=1}^{m} \pi_{j}^{*} T_{j} = \bigwedge_{j=1}^{m} \pi_{j}^{*} (\star T_{j})
\]
holds for arbitrary $T_{j} \in \Lambda^{\dim Z_{j}}$, provided that we use on $(\R^{n})^{m}$ the product of the orientations on $\R^{n}$.

In the third identity we used the identity
\begin{equation}
\label{eq:3}
\< \bigwedge_{j=1}^{m} \pi_{j}^{*} (\tilde{T}_{j}), \bigwedge_{l=1}^{n} \sum_{j=1}^{m} \dif x_{j,l} \>
=
\< \bigwedge_{j=1}^{m} \tilde{T}_{j}, \bigwedge_{l=1}^{n} \dif x_{l} \>,
\end{equation}
which holds for arbitrary $\tilde{T}_{j} \in \Lambda^{n-\dim Z_{j}}$.
Indeed, by multilinearity it suffices to verify \eqref{eq:3} for simple wedge products $\tilde{T}_{j}=\wedge_{a=1}^{n-\dim Z_{j}} e_{l(j,a)}$.
In this case the left-hand side of \eqref{eq:3} equals
\[
\< \bigwedge_{j=1}^{m} \bigwedge_{a=1}^{n-\dim Z_{j}} \dif x_{j,l(j,a)}, \bigwedge_{l=1}^{n} \sum_{j=1}^{m} \dif x_{j,l} \>.
\]
Expanding the sums on the right-hand side, we see that this vanishes unless all $l(j,a)$ are distinct, in which case it equals the sign of the permutation $(l(j,a))$ that is ordered lexicographically, first by $j$ and then by $a$.
This description also applies to the right-hand side of \eqref{eq:3}.
\end{proof}

\begin{proof}[Proof of \eqref{eq:tensor:sum}]
By Lemma~\ref{lem:intersection} with $m=k_{j}+1$, $Z_{1} = H_{j}$, and $U=B(0,C)$, we have
\begin{align*}
\MoveEqLeft
\sum_{Q} \abs[\Big]{\<T_{Q}H_{j}, \bigwedge_{l=1}^{k_{j}} (N (Z_{l+1} \cap Q))\>}\\
&\leq
\int_{U} \abs{H_{j}\cap (Z_{2}+v_{2}) \cap \dotsb \cap (Z_{m}+v_{m})} \dif (v_{2},\dotsc,v_{m}).
\end{align*}
By B\'ezout's theorem \cite[Example 8.4.6]{MR1644323}, the cardinality on the right-hand side is either infinite or bounded by $(\deg H_{j}) \prod_{l=1}^{k_{j}} (\deg p_{l})$ if $Z_{l+1} = Z_{p_{l}}$ for some polynomials $p_{l}$.
But this cardinality is only infinite for a zero measure set of $v_{2},\dotsc,v_{m}$.
Hence
\[
\sum_{Q} \abs[\Big]{\<T_{Q}H_{j}, \bigwedge_{l=1}^{k_{j}} (N (Z_{p_{l}} \cap Q))\>}\\
\lesssim
(\deg H_{j}) \prod_{l=1}^{k_{j}} (\deg p_{l}).
\]
Integrating this estimate over $p_{1},\dotsc,p_{k_{j}}$ with respect to the measure $\sigma$ in each variable and using definitions \eqref{eq:Sj} and \eqref{eq:normal-measure}, we obtain \eqref{eq:tensor:sum}.
\end{proof}

\subsection{Wedge estimates}
\label{sec:wedge}
In order to apply \eqref{eq:Vis>M:scaled}, we need lower bounds on $\vol \B_{\s}$ in terms of related quantities.
This is roughly in the spirit of elementary lower bounds for the volume of polar bodies.
Here and later we will abbreviate $\B_{\mu} := \B_{\s_{\mu}}$.
\begin{lemma}[{cf.\ \cite[Theorem 3.1]{arxiv:1510.09132}}]
\label{lem:wedge}
Let $\mu$ be a measure on $\abs{\Lambda^{1}}$ with $\abs{\mu}<\infty$ and bounded $\B_{\mu}$.
Then
\[
1
\lesssim
\abs{\mu^{\wedge n}} \vol \B_{\mu}.
\]
\end{lemma}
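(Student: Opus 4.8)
The plan is to prove $1 \lesssim \abs{\mu^{\wedge n}} \vol \B_\mu$ by relating the $n$-th wedge moment of $\mu$ to the volume of the dual body $\B_\mu$ via a direct computation of an integral over $(\abs{\Lambda^1})^n$. The key identity is that for vectors $w_1,\dots,w_n \in \R^n$, the quantity $\abs{w_1 \wedge \dots \wedge w_n}$ equals the absolute value of $\det(w_1,\dots,w_n)$, and this determinant admits an integral representation. Specifically, I would use the fact that for any seminorm $\s$ with bounded unit ball $\B_\s$, one has the sharp lower bound
\[
\vol \B_\s \cdot \prod_{i=1}^n \s(w_i) \gtrsim \abs{\det(w_1,\dots,w_n)}
\]
for \emph{any} choice of $w_1,\dots,w_n$. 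To see this, set $L$ to be the linear map sending the standard basis $e_i$ to $w_i$; then $\det L = \det(w_1,\dots,w_n)$ and $L(\B_\s)$ has volume $\abs{\det L}\vol\B_\s$. On the other hand $L(\B_\s)$ is contained in the box $\prod_i [-\s(w_i),\s(w_i)]$ up to a rotation — more precisely, writing $v = \sum t_i e_i \in \B_\s$, we get $Lv = \sum t_i w_i$ and one controls the components by the seminorm values — so $\abs{\det L}\vol\B_\s \lesssim \prod_i \s(w_i)$. Actually the clean way: the polar body $\B_\s^\circ$ contains the convex hull of $\pm w_i/\s(w_i)$-type vectors... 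I would instead argue via John's theorem or directly: $\B_\s$ contains a parallelepiped spanned by vectors dual to the $w_i$, whose volume is $\abs{\det(w_1,\dots,w_n)}^{-1}\prod_i$ (appropriate factors). The cleanest route avoiding casework is the following linear-algebra lemma.

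\begin{lemma*}
If $\s$ is a seminorm on $\R^n$ with bounded unit ball and $w_1,\dots,w_n\in\R^n$, then $\abs{w_1\wedge\dots\wedge w_n}\cdot\vol\B_\s \gtrsim 2^n \abs{w_1\wedge\dots\wedge w_n}/\bigl(\prod\s(w_i)\bigr)\cdot(\text{something})$...
\end{lemma*}

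Let me restate the actual plan cleanly. First I would establish: for any $w_1,\dots,w_n \in \abs{\Lambda^1}$,
\[
\vol\B_\s \geq \frac{2^n}{n!}\cdot\frac{\abs{w_1\wedge\dots\wedge w_n}}{\s(w_1)\cdots\s(w_n)},
\]
whenever the right-hand side makes sense (all $\s(w_i)>0$; otherwise interpret as $0$ or the statement is vacuous). This follows because the vectors $u_i := w_i/\s(w_i)$ all satisfy $\s(u_i)\le 1$, hence lie in $\B_\s$; since $\B_\s$ is convex and symmetric, it contains the convex hull of $\{\pm u_1,\dots,\pm u_n\}$, a cross-polytope of volume $\frac{2^n}{n!}\abs{u_1\wedge\dots\wedge u_n}$. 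Then, integrating this inequality against $\dif\mu(w_1)\cdots\dif\mu(w_n)$ is the wrong move since $\s=\s_\mu$ depends on $\mu$; instead I integrate the \emph{reverse} form. Observe that $\s_\mu(w) = \int\abs{\langle w,v\rangle}\dif\mu(v) \ge \abs{\langle w, \nu\rangle}$ is not pointwise bounded above by anything simple, so the trick is: apply the cross-polytope bound with $\s = \s_\mu$, getting $\vol\B_\mu \cdot \s_\mu(w_1)\cdots\s_\mu(w_n) \gtrsim \abs{w_1\wedge\dots\wedge w_n}$, then integrate the left side over $w_1,\dots,w_n$ each distributed according to $\mu$ — but $\int\s_\mu(w)\dif\mu(w) = \int\int\abs{\langle w,v\rangle}\dif\mu(v)\dif\mu(w) \le \abs{\mu}^2$ is the wrong power. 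The correct pairing: integrate $w_i$ against $\mu$ only on the right, i.e. use $\abs{\mu^{\wedge n}} = \int\abs{w_1\wedge\dots\wedge w_n}\dif\mu(w_1)\cdots\dif\mu(w_n)$, and bound $\s_\mu(w_i)\le \abs{\mu}$ is false in general.

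The resolution — and I expect this to be the main obstacle, requiring care — is to use the boundedness hypothesis on $\B_\mu$: boundedness means $\s_\mu$ is in fact a genuine norm, equivalent to the Euclidean norm, so $\s_\mu(w) \le C_\mu\abs{w}$ for $w$ of unit length is vacuous since we want a \emph{dimensional} constant. The right argument must instead integrate the cross-polytope bound over $w_i\in\B_\mu$ (normalized), not over $\mu$, and relate $\int_{\B_\mu}\dots$ back to $\abs{\mu^{\wedge n}}$. Concretely: pick $w_1,\dots,w_n$ independently with respect to a probability measure proportional to $\dif\mu$ and note $\abs{w_1\wedge\dots\wedge w_n} \le \prod\abs{w_i}$, while $\s_\mu(w_i) \ge$ some fixed fraction of $\abs{w_i}$ is FALSE. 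So the genuinely correct statement must be: choose for each $i$ a direction, apply the bound $\vol\B_\mu \gtrsim \abs{w_1\wedge\dots\wedge w_n}/\prod\s_\mu(w_i)$ with $w_i$ ranging so that $\s_\mu(w_i)$ is comparable to $\abs{w_i}$, which holds for $w_i$ in a positive-measure set by boundedness of $\B_\mu$ (it gives $\s_\mu(w)\gtrsim\abs{w}$ \emph{everywhere}), hence $\s_\mu(w_i)$ sits between $\gtrsim\abs{w_i}$ and $\le\abs{\mu}\abs{w_i}/(\text{avg})$... — in any case $\s_\mu(w) \lesssim \abs{\mu}\cdot 1$ for unit $w$ trivially but we need no upper bound at all: simply integrate $\abs{w_1\wedge\dots\wedge w_n}\dif\mu^{\otimes n}$ and use that on the support, after normalizing $w_i$ to unit length, $\abs{w_1\wedge\dots\wedge w_n} = \abs{\hat w_1\wedge\dots\wedge\hat w_n}\prod\abs{w_i} \le \vol\B_\mu\prod\s_\mu(\hat w_i)\cdot\frac{n!}{2^n}\prod\abs{w_i} \le \frac{n!}{2^n}\vol\B_\mu\prod\s_\mu(w_i)$ by homogeneity, and finally $\prod\s_\mu(w_i) \le 1$ on... no. I will in the writeup fix this by integrating only \emph{one} copy of $w$ against $\mu$ and the rest via the cross-polytope volume formula applied at a single well-chosen frame; the boundedness of $\B_\mu$ guarantees we may select a frame $w_1,\dots,w_n$ with $\s_\mu(w_i)\lesssim 1$ and $\abs{w_1\wedge\dots\wedge w_n}\gtrsim 1$ simultaneously, and then $\abs{\mu^{\wedge n}}\gtrsim\abs{w_1\wedge\dots\wedge w_n}$ by monotonicity of moments together with the definition of $\mu^{\wedge n}$ as a pushforward — giving $1 \lesssim \abs{w_1\wedge\dots\wedge w_n} \lesssim \vol\B_\mu\cdot\prod\s_\mu(w_i) \lesssim \vol\B_\mu\cdot\abs{\mu^{\wedge n}}$ after absorbing constants. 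The delicate point, and the part I'd spend the most words on, is extracting such a good frame from the hypothesis "$\B_\mu$ bounded" — this is where one invokes that $\s_\mu(w) = \int\abs{\langle w,v\rangle}\dif\mu(v)$ and uses a compactness/averaging argument over orthonormal frames to find one along which all the pairings are comparable to $\abs{\mu}$.
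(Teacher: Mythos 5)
There is a genuine gap, and the approach as described does not close.

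Your correct ingredient is the cross-polytope (polar body) inequality
\[
\vol \B_{\s} \geq \tfrac{2^{n}}{n!}\,\frac{\abs{w_{1}\wedge\dotsm\wedge w_{n}}}{\s(w_{1})\dotsm\s(w_{n})},
\]
valid whenever $\s(w_i)>0$. But notice what direction this points: plugging in $w_i=v_i$ and integrating in $\dif\mu^{\otimes n}$ gives
\[
\abs{\mu^{\wedge n}} \leq \tfrac{n!}{2^n}\,\vol\B_{\mu}\Bigl(\int \s_{\mu}\dif\mu\Bigr)^{n},
\]
an \emph{upper} bound on $\abs{\mu^{\wedge n}}$ in terms of $\vol\B_\mu$, whereas the lemma demands a lower bound on the product. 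So the polar-body estimate by itself cannot give the result, and the several reformulations you cycle through inherit this orientation problem.

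The final chain you propose has two unjustified steps. First, the claim that boundedness of $\B_\mu$ allows one to select a frame with $\s_\mu(w_i)\lesssim 1$ and $\abs{w_1\wedge\dotsm\wedge w_n}\gtrsim 1$ simultaneously is false in general: by the very cross-polytope bound you stated, such a frame would force $\vol\B_\mu\gtrsim 1$, but the hypotheses place no lower bound on $\vol\B_\mu$ (and the conclusion is scale-invariant under $\mu\mapsto\lambda\mu$, which sends $\vol\B_\mu\mapsto\lambda^{-n}\vol\B_\mu$). The John ellipsoid gives a frame with $\s_\mu(w_i)\lesssim 1$, but then $\abs{w_1\wedge\dotsm\wedge w_n}\sim\vol\B_\mu$, not $\gtrsim 1$. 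Second, the assertion that $\abs{\mu^{\wedge n}}\gtrsim\abs{w_1\wedge\dotsm\wedge w_n}$ ``by monotonicity of moments together with the definition of $\mu^{\wedge n}$ as a pushforward'' is not a theorem --- it is essentially the whole content of the lemma and would require proof. A fixed frame wedge product says nothing about the size of $\abs{\mu^{\wedge n}}$ without knowing $\mu$ does not concentrate near a hyperplane.

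The missing idea, which is the heart of the paper's proof, is the following. Take the John ellipsoid of $\B_\mu$ with axes $l_1e_1,\dotsc,l_ne_n$, and change variables by the self-adjoint map $T\colon e_j\mapsto l_je_j$, replacing $\mu$ by its pushforward $\tilde\mu$; a determinant manipulation reduces the task to $\int\abs{v_1\wedge\dotsm\wedge v_n}\dif\tilde\mu^{\otimes n}\gtrsim 1$. The payoff of the normalization is that $\s_{\tilde\mu}(u)\sim\abs{u}$, which simultaneously gives $\abs{\tilde\mu}\sim 1$ and a quantitative non-concentration statement: the measure $\abs{v}\dif\tilde\mu(v)$ cannot put most of its mass near any hyperplane $u^\perp$. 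One then selects $n$ pairwise transverse cones, each carrying a fixed proportion of $\abs{v}\dif\tilde\mu(v)$, and restricting the $i$-th integration variable to the $i$-th cone gives the lower bound. This ``split into transverse cones using non-concentration'' step is precisely what your proposal lacks, and there is no way to replace it with a single fixed frame.
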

\begin{proof}
Let $e_{1},\dotsc,e_{n}$ be an orthonormal basis of $\R^{n}$ consisting of vectors pointing in the directions of the axes of the John ellipsoid of $\B_{\mu}$.
Let $l_{1}\leq\dotsb\leq l_{n}$ be the lengths of these axes.
It suffices to show that
\[
\abs{\< \mu^{\wedge n} , l_{1}e_{1} \wedge\dotsm \wedge l_{n}e_{n} \>}
\gtrsim 1.
\]
Using a measurable selection $\abs{\Lambda^{1}} \to \R^{n}$, we can identify $\mu$ with a measure on $\R^{n}$.
Let $T : \R^{n} \to \R^{n}$ be the linear map with $Te_{j}=l_{j}e_{j}$.
Then $T$ is self-adjoint and
\begin{align*}
\abs{\< \mu^{\wedge n} , l_{1}e_{1} \wedge\dotsm \wedge l_{n}e_{n} \>}
&=
\int \abs{\< v_{1} \wedge\dotsm\wedge v_{n}, T e_{1} \wedge\dotsm\wedge Te_{n} \>} \dif\mu(v_{1}) \dotsm \dif\mu(v_{n})\\
&=
\int \abs{\det \< v_{i} , T e_{j} \>_{ij} } \dif\mu(v_{1}) \dotsm \dif\mu(v_{n})\\
&=
\int \abs{\det \< Tv_{i} , e_{j} \>_{ij} } \dif\mu(v_{1}) \dotsm \dif\mu(v_{n})\\
&=
\int \abs{\det \< v_{i} , e_{j} \>_{ij} } \dif\tilde\mu(v_{1}) \dotsm \dif\tilde\mu(v_{n}),
\end{align*}
where $\tilde\mu$ is the pushforward of $\mu$ under $T$.
Hence it suffices to show that
\begin{equation}
\label{eq:1}
\int \abs{v_{1} \wedge \dotsm \wedge v_{n}} \dif\tilde\mu(v_{1}) \dotsm \dif\tilde\mu(v_{n})
\gtrsim 1.
\end{equation}
Note that
\[
\s_{\tilde\mu}(u)
=
\int \abs{\<u,v\>} \dif\tilde\mu(v)
=
\int \abs{\<u,Tv\>} \dif\mu(v)
=
\s_{\mu}(Tu).
\]
It follows that the John ellipsoid of $\B_{\tilde\mu}$ is the unit ball, hence $\s_{\tilde\mu}(u) \sim \norm{u}$.
This in turn implies that $\abs{\tilde\mu}\sim 1$, and that the measure $\abs{v}\dif\tilde\mu(v)$ cannot concentrate in a small angular neighborhood of the hyperplane $u^{\perp}$ for any $u \in \R^{n}\setminus\Set{0}$.
Hence we can select $n$ transverse cones in $\R^{n}$ whose projections to $\abs{\Lambda^{1}}$ support a fixed proportion of $\abs{v}\dif\tilde\mu(v)$.
Restricting each variable on the left-hand side of \eqref{eq:1} to one of these cones we see that \eqref{eq:1} holds.
\end{proof}

\begin{lemma}[{\cite[Theorem 1]{MR0101508}}]
\label{lem:convex-volume-split}
Let $K\subset\R^{n}$ be a convex body and $T\subset\R^{n}$ a $k$-dimensional affine subspace.
Then
\begin{equation}
\label{eq:convex-volume-split}
\vol_{n-k} (K + T) \vol_{k}(K \cap T)
\leq
\binom{n}{k} \vol_{n} K,
\end{equation}
where $K+T := \Set{x + T \given x\in K} \subset \R^{n}/T$, and the latter space is equipped with the Euclidean volume with respect to the quotient metric.
\end{lemma}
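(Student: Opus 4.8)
The plan is to reduce to the case that $T$ is a linear subspace and then integrate the elementary convexity bound for parallel sections of $K$, using polar coordinates on the quotient $\R^{n}/T$. All three quantities in \eqref{eq:convex-volume-split} are invariant under translating $K$ by a vector in $T$, so we may assume that $T$ is a linear subspace and, if $K\cap T\neq\emptyset$, that $0\in K\cap T$; if $K\cap T=\emptyset$ the left-hand side of \eqref{eq:convex-volume-split} vanishes and there is nothing to prove. We may also assume $K$ is compact (pass to its closure) and $1\leq k\leq n-1$, since $k\in\Set{0,n}$ are trivial. Write $\R^{n}=T\oplus T^{\perp}$, identify $\R^{n}/T$ isometrically with $T^{\perp}$, and let $\pi\colon\R^{n}\to T^{\perp}$ be the orthogonal projection, so that the body $K+T$ is identified with $L:=\pi(K)$ and $K\cap T$ with the central section $K_{0}:=K\cap T$; under this identification $\vol_{n-k}(K+T)=\vol_{n-k}(L)$.

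The geometric input is a one-line estimate for the parallel sections $K_{z}:=\Set{w\in T \given (w,z)\in K}$, $z\in T^{\perp}$. We have $L=\Set{z\given K_{z}\neq\emptyset}$ and, by Fubini, $\vol_{n}K=\int_{L}\vol_{k}(K_{z})\dif z$. For $z\in L$, $s\in[0,1]$, and any fixed $w_{1}\in K_{z}$, convexity of $K$ gives $(1-s)K_{0}+sw_{1}\subseteq K_{sz}$, because each point $(1-s)(w_{0},0)+s(w_{1},z)$ with $w_{0}\in K_{0}$ lies in $K$. Hence
\[
\vol_{k}(K_{sz})\geq(1-s)^{k}\,\vol_{k}(K_{0})
\qquad\text{for all }z\in L,\ s\in[0,1].
\]

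Next I would integrate over $L$ in polar coordinates about the origin. Writing $z=t\omega$ with $\omega\in\Sphere^{n-k-1}$ and $0\leq t\leq r(\omega):=\sup\Set{t\geq0\given t\omega\in L}$ (finite, and attained since $K$ is compact), and substituting $t=s\,r(\omega)$, we get for each $\omega$
\[
\int_{0}^{r(\omega)}\vol_{k}(K_{t\omega})\,t^{n-k-1}\dif t
=r(\omega)^{n-k}\int_{0}^{1}\vol_{k}(K_{s\,r(\omega)\omega})\,s^{n-k-1}\dif s
\geq r(\omega)^{n-k}\,\vol_{k}(K_{0})\int_{0}^{1}s^{n-k-1}(1-s)^{k}\dif s .
\]
Integrating over $\omega\in\Sphere^{n-k-1}$, using $\int_{0}^{1}s^{n-k-1}(1-s)^{k}\dif s=\frac{(n-k-1)!\,k!}{n!}$ together with the polar identity $\vol_{n-k}(L)=\frac{1}{n-k}\int_{\Sphere^{n-k-1}}r(\omega)^{n-k}\dif\omega$, one arrives at
\[
\vol_{n}K\geq\vol_{k}(K_{0})\,\vol_{n-k}(L)\cdot(n-k)\cdot\frac{(n-k-1)!\,k!}{n!}
=\vol_{k}(K_{0})\,\vol_{n-k}(L)\Big/\binom{n}{k},
\]
which is exactly \eqref{eq:convex-volume-split}.

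I do not anticipate a genuine obstacle. The only points that need care are the translation reduction (after translating, $K\cap T$ really is the section that enters the polar integral) and the degenerate cases, together with the routine measurability and compactness bookkeeping that justifies Fubini and the polar change of variables. As a consistency check, the inequality is sharp: equality holds for cone-type bodies $K=\operatorname{conv}\bigl(K_{0}\cup(\Set{w}\times\partial L)\bigr)$ with $w\in K_{0}$ arbitrary (for $n-k=1$ these are double cones over $K_{0}$), which in particular confirms the constant $\binom{n}{k}$.
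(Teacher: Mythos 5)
The paper does not give its own proof of this lemma; it is stated with a citation to Rogers and Shephard (\cite[Theorem 1]{MR0101508}) and used as a black box. So there is no in-paper argument to compare against. Your proof is correct and is essentially the classical Rogers--Shephard argument: the section bound $\vol_{k}(K_{sz})\geq(1-s)^{k}\vol_{k}(K_{0})$ from convexity, followed by Fubini and a radial change of variables on $L=\pi(K)$ (star-shaped about $0$ since $0\in K$), with the Beta integral producing the constant $\binom{n}{k}^{-1}$. The reductions (translating in $T$, discarding $k\in\Set{0,n}$ and the empty-intersection case) are handled correctly, and your equality check for bipyramid-type bodies is a valid sanity check of the constant.
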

For centrally symmetric convex bodies a converse inequality to \eqref{eq:convex-volume-split} also holds, but we will not use this fact.

\begin{corollary}
\label{cor:wedge:subspace}
Let $T \subset \R^{n}$ be a $k$-dimensional affine subspace and $\mu$ a measure on $\abs{\Lambda^{1}}$.
Then
\[
\vol_{n-k}(\B_{\mu} + T)
\lesssim
\abs{\<\mu^{\wedge k}, T\>} \vol_{n} \B_{\mu}.
\]
\end{corollary}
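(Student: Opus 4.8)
The plan is to combine Lemma~\ref{lem:convex-volume-split}, applied to the centrally symmetric convex body $K=\B_{\mu}$, with a lower bound for $\vol_{k}(\B_{\mu}\cap T)$ furnished by Lemma~\ref{lem:wedge} applied inside $T$. First I would reduce to the case that $T$ is a \emph{linear} subspace. Writing $T=T_{0}+a$ with $T_{0}$ linear, one checks that $\B_{\mu}+T$ is a translate of $\B_{\mu}+T_{0}$ in $\R^{n}/T_{0}$, so $\vol_{n-k}(\B_{\mu}+T)=\vol_{n-k}(\B_{\mu}+T_{0})$, while the element of $\abs{\Lambda^{k}}$ attached to $T$ is the translation-invariant volume form and hence depends only on $T_{0}$; thus neither side of the asserted inequality changes if $T$ is replaced by $T_{0}$. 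One may also assume that $\B_{\mu}$ is bounded and $\abs{\mu}<\infty$, since otherwise $\vol_{n}\B_{\mu}=\infty$ and the estimate holds trivially (the residual degenerate configurations being vacuous or treatable by approximating $\mu$). With $T$ linear, Lemma~\ref{lem:convex-volume-split} gives
\[
\vol_{n-k}(\B_{\mu}+T)\cdot\vol_{k}(\B_{\mu}\cap T)\leq\binom{n}{k}\vol_{n}\B_{\mu},
\]
so the corollary follows once we show $1\lesssim\abs{\<\mu^{\wedge k},T\>}\cdot\vol_{k}(\B_{\mu}\cap T)$.

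To obtain this lower bound I would transport the problem into $T\cong\R^{k}$. Let $\pi\colon\R^{n}\to T$ be the orthogonal projection and let $\mu'$ be the pushforward of $\mu$ under $\pi$, a measure on $\abs{\Lambda^{1}(T)}$. Two identities do the work. First, since $\<v,\pi u\>=\<v,u\>$ for $v\in T$, the restriction of $\s_{\mu}$ to $T$ equals $\s_{\mu'}$, and therefore $\B_{\mu'}=\B_{\mu}\cap T$ (computed intrinsically in $T$). Second, choosing an orthonormal basis $f_{1},\dotsc,f_{k}$ of $T$, so that $T=f_{1}\wedge\dotsb\wedge f_{k}$ as an element of $\abs{\Lambda^{k}}$, we have for all $u_{1},\dotsc,u_{k}\in\R^{n}$
\[
\abs{\<u_{1}\wedge\dotsb\wedge u_{k},T\>}=\abs{\det(\<u_{i},f_{j}\>)_{i,j}}=\abs{\det(\<\pi u_{i},f_{j}\>)_{i,j}}=\abs{\pi u_{1}\wedge\dotsb\wedge\pi u_{k}},
\]
using \eqref{eq:inner-prod-on-Lambdak} in the first equality, $\<u_{i},f_{j}\>=\<\pi u_{i},f_{j}\>$ in the second, and that $f_{1}\wedge\dotsb\wedge f_{k}$ is a unit vector in the one-dimensional space $\Lambda^{k}T$ in the third. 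Integrating this identity over $u_{1},\dotsc,u_{k}$ against $\mu$ and unwinding the definition of the pushforward yields $\abs{\<\mu^{\wedge k},T\>}=\abs{(\mu')^{\wedge k}}$.

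Finally I would apply Lemma~\ref{lem:wedge} in dimension $k$ to $\mu'$ — its hypotheses hold because $\abs{\mu'}\leq\abs{\mu}<\infty$ and $\B_{\mu'}=\B_{\mu}\cap T$ is bounded — obtaining
\[
1\lesssim\abs{(\mu')^{\wedge k}}\cdot\vol_{k}\B_{\mu'}=\abs{\<\mu^{\wedge k},T\>}\cdot\vol_{k}(\B_{\mu}\cap T).
\]
Inserting this into the inequality from Lemma~\ref{lem:convex-volume-split} gives
\[
\vol_{n-k}(\B_{\mu}+T)\leq\binom{n}{k}\frac{\vol_{n}\B_{\mu}}{\vol_{k}(\B_{\mu}\cap T)}\lesssim\abs{\<\mu^{\wedge k},T\>}\cdot\vol_{n}\B_{\mu},
\]
as desired. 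The step I expect to require the most care is the identity $\abs{\<\mu^{\wedge k},T\>}=\abs{(\mu')^{\wedge k}}$ together with the matching assertion $\s_{\mu'}=\s_{\mu}|_{T}$, i.e.\ checking that orthogonal projection onto $T$ simultaneously realizes the slice $\B_{\mu}\cap T$ and the directional wedge quantity $\<\mu^{\wedge k},T\>$; once these are in place, the corollary is a mechanical combination of Lemmas~\ref{lem:wedge} and~\ref{lem:convex-volume-split}.
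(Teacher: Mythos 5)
Your proof is correct and follows essentially the same route as the paper's: reduce to a linear subspace, apply Lemma~\ref{lem:convex-volume-split} with $K=\B_{\mu}$, push $\mu$ forward under orthogonal projection onto $T$, and invoke Lemma~\ref{lem:wedge} in dimension $k$. The only difference is that you spell out the two identities $\B_{\mu'}=\B_{\mu}\cap T$ and $\abs{\<\mu^{\wedge k},T\>}=\abs{(\mu')^{\wedge k}}$, which the paper states without verification.
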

\begin{proof}
The conclusion only depends on the equivalence class of $T$ modulo translation, so we may assume $0 \in T$.
By Lemma~\ref{lem:convex-volume-split} with $K=\B_{\mu}$, it suffices to show
\begin{equation}
\label{eq:2}
1 \lesssim \vol_{k}(\B_{\mu} \cap T) \abs{\<\mu^{\wedge k},T\>}.
\end{equation}
Let $\tilde\mu$ be the pushforward of $\mu$ under the orthogonal projection onto $T$, which is well-defined modulo $\pm 1$.
Then $\B_{\mu} \cap T = \B_{\tilde\mu}$ and $\abs{\<\mu^{\wedge k},T\>} = \abs{\tilde\mu^{\wedge k}}$.
The claim \eqref{eq:2} now follows from Lemma~\ref{lem:wedge} with $n$ replaced by $k$ and $\mu$ replaced by $\tilde\mu$.
\end{proof}

\begin{lemma}[{cf.\ \cite[Corollary 7.6]{arxiv:1510.09132}}]
\label{lem:BL-est}
Let $T_{1},\dotsc,T_{m} \subset\R^{n}$ be proper affine subspaces of dimensions $k_{1},\dotsc,k_{m}$.
Let $\mu$ be a measure on $\abs{\Lambda^{1}}$ such that $\frac{c}{R}\B \subseteq \B_{\mu} \subseteq C\B$.
Then
\begin{equation}
\label{eq:BL-est}
\vol(\B_{\mu})^{1-\sum_{j=1}^{m} p_{j}}
\leq C^{P}
\BL(\vec T, \vec{p}, (1/R,1))
\prod_{j=1}^{m} \abs{\<T_{j}, \mu^{\wedge k_{j}}\>}^{p_{j}}.
\end{equation}
\end{lemma}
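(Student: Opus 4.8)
The plan is to reduce \eqref{eq:BL-est} to a purely geometric Brascamp--Lieb-type inequality for the convex body $\B_{\mu}$ and then test the Brascamp--Lieb inequality of Definition~\ref{def:BL} against characteristic functions of suitable neighborhoods of the projections of $\B_{\mu}$. First, all quantities in \eqref{eq:BL-est} depend only on the linear parts of the $T_{j}$ (the directional areas $\abs{\<T_{j},\mu^{\wedge k_{j}}\>}$ because volume forms are translation invariant, and the Brascamp--Lieb constant by definition), so we may assume that $T_{1},\dots,T_{m}$ are linear subspaces. Second, replacing $\mu$ by $C_{0}\mu$, where $C_{0}\geq1$ is the dimensional constant with $\B_{\mu}\subseteq C_{0}\B$, rescales $\B_{\mu}$ to lie inside $\B$ while still keeping $\frac{c}{R}\B\subseteq\B_{\mu}$ for a (new) dimensional $c>0$; this multiplies the two sides of \eqref{eq:BL-est} by fixed powers of $C_{0}$, and since $P\geq1$ and $k_{j}<n$ a short computation shows that the associated change in the constant is at most a dimensional constant raised to the power $P$, so we may assume $\B_{\mu}\subseteq\B$. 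Finally, Corollary~\ref{cor:wedge:subspace} applied to each $T_{j}$ gives $\vol_{n-k_{j}}(\pi_{j}(\B_{\mu}))\lesssim\abs{\<T_{j},\mu^{\wedge k_{j}}\>}\vol_{n}(\B_{\mu})$, where $\pi_{j}\colon\R^{n}\to\R^{n}/T_{j}$ is the quotient map; taking $p_{j}$-th powers and multiplying, it therefore suffices to prove the geometric inequality
\[
\vol_{n}(\B_{\mu})\leq C^{P}\,\BL(\vec T,\vec p,(1/R,1))\prod_{j=1}^{m}\vol_{n-k_{j}}(\pi_{j}(\B_{\mu}))^{p_{j}}.
\]

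To prove this, fix in each quotient space $\R^{n}/T_{j}$ (with the quotient Euclidean metric) a grid of cubes of side length $1/R$, let $V_{j}$ be the union of those grid cubes that meet $\pi_{j}(\B_{\mu})$, and set $f_{j}:=\one_{V_{j}}$. Then $f_{j}$ is integrable and constant at scale $1/R$, hence an admissible input in Definition~\ref{def:BL} with $(r,R)$ there equal to $(1/R,1)$. Since $\pi_{j}(\B_{\mu})\supseteq\pi_{j}(\tfrac{c}{R}\B)$ is a ball of radius $\gtrsim1/R$ about the origin and $V_{j}\subseteq\pi_{j}(\B_{\mu})+[-1/R,1/R]^{n-k_{j}}$, convexity gives $V_{j}\subseteq C\pi_{j}(\B_{\mu})$, so $\int_{\R^{n}/T_{j}}f_{j}=\vol_{n-k_{j}}(V_{j})\lesssim\vol_{n-k_{j}}(\pi_{j}(\B_{\mu}))$ with a dimensional constant, contributing a factor $C^{P}$ after taking powers. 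On the other hand, for $x\in\B_{\mu}$ we have $\pi_{j}(x)\in\pi_{j}(\B_{\mu})\subseteq V_{j}$ for every $j$, so $\prod_{j}f_{j}(x+T_{j})^{p_{j}}\geq\one_{\B_{\mu}}(x)$; since $\B_{\mu}\subseteq\B=B(0,1)$ this yields
\[
\vol_{n}(\B_{\mu})\leq\int_{B(0,1)}\prod_{j=1}^{m}f_{j}(x+T_{j})^{p_{j}}\dif x\leq\BL(\vec T,\vec p,(1/R,1))\prod_{j=1}^{m}\Bigl(\int_{\R^{n}/T_{j}}f_{j}\Bigr)^{p_{j}},
\]
and combining with the previous bound on $\int f_{j}$ gives the geometric inequality, hence \eqref{eq:BL-est}.

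The routine part is bookkeeping the dimensional constants: each loss---in the rescaling of $\mu$, in Corollary~\ref{cor:wedge:subspace}, and in passing from $\pi_{j}(\B_{\mu})$ to $V_{j}$---is a dimensional constant raised to a power $\lesssim P$, so all of them collapse into a single $C^{P}$ using $P\geq1$. The point that genuinely requires the hypotheses is the scale matching in the second paragraph: the characteristic function of the exact projection $\pi_{j}(\B_{\mu})$ is not constant at scale $1/R$, so one is forced to thicken it to the grid-cube neighborhood $V_{j}$, and the enlargement $\vol(V_{j})\lesssim\vol(\pi_{j}(\B_{\mu}))$ is available only because $\frac{c}{R}\B\subseteq\B_{\mu}$ forces $\pi_{j}(\B_{\mu})$ to already contain a ball of radius comparable to $1/R$. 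The two-sided control $\frac{c}{R}\B\subseteq\B_{\mu}\subseteq C\B$ is exactly what makes the reduction to the scale $(1/R,1)$ legitimate, and I expect this to be the crux of the argument.
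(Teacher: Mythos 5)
Your proof is correct and follows essentially the same route as the paper's: test the Brascamp--Lieb inequality with (approximate) indicators of the projections of $\B_{\mu}$ and then apply Corollary~\ref{cor:wedge:subspace}. The only difference is that you make explicit the two details the paper absorbs into a single $\lesssim$ --- thickening $\pi_{j}(\B_{\mu})$ to a grid-cube neighborhood $V_{j}$ so that the test functions are constant at scale $1/R$, and rescaling so that $\B_{\mu}\subseteq B(0,1)$ --- both of which are valid precisely because of the two-sided containment $\frac{c}{R}\B\subseteq\B_{\mu}\subseteq C\B$, as you correctly observe.
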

\begin{proof}
By Definition~\ref{def:BL} and Corollary~\ref{cor:wedge:subspace}, we have
\begin{align*}
\vol(\B_{\mu})
&=
\int_{\R^{n}} \one_{\B_{\mu}} \dif x\\
&\leq
\int_{\R^{n}} \prod_{j=1}^{m} (\one_{\B_{\mu} + T_{j}}(x+T_{j}))^{p_{j}} \dif x\\
&\lesssim
\BL(\vec T, \vec{p}, (1/R,1)) \prod_{j=1}^{m} \bigl( \int_{\R^{n}/T_{j}} \one_{\B_{\mu} + T_{j}} \bigr)^{p_{j}}\\
&\leq
\BL(\vec T, \vec{p}, (1/R,1)) \prod_{j=1}^{m} \Bigl( C \abs[\big]{\<T_{j}, \mu^{\wedge k_{j}}\>} \vol_{n}(\B_{\mu}) \Bigr)^{p_{j}}.
\qedhere
\end{align*}
\end{proof}

\begin{proof}[Proof of \eqref{eq:tensor:prod}]
By definition \eqref{eq:Fotimes} of the Fremlin tensor product norm, we obtain
\begin{align*}
\prod_{j=1}^{m} (R^{k_{j}} S_{j}(Q))^{p_{j}}
&=
\prod_{j=1}^{m} \bigl( \int_{Q\cap H_{j}} \abs{\<T_{x_{j}}H_{j}, \mu_{Q}^{\wedge k_{j}}\>} \dif x_{j} \bigr)^{p_{j}}
& \text{by \eqref{eq:Sj}}\\
&=
\prod_{j=1}^{m} \norm{ \abs{\<T_{x_{j}}H_{j}, \mu_{Q}^{\wedge k_{j}}\>}^{p_{j}/P} }_{L^{P/p_{j}}_{x_{j}}(Q\cap H_{j})}^{P}\\
&\geq
\norm{ \prod_{j=1}^{m} \abs{\<T_{x_{j}}H_{j}, \mu_{Q}^{\wedge k_{j}}\>}^{p_{j}/P} }_{\Fotimes_{j=1}^{m} L^{P/p_{j}}_{x_{j}}(H_{j} \cap Q)}^{P}
& \text{by \eqref{eq:Fotimes}.}
\end{align*}
The measure $\mu_{Q} = \mu_{\sigma,Q}$ satisfies $\B_{\mu} \subseteq C\B$ by \eqref{eq:Bmu-subset-B}.
On the other hand, since $\sigma$ is supported on a set of polynomials of degree $\lesssim R$, it follows from Corollary~\ref{cor:tube} that $\s_{\sigma,Q}(v) \lesssim R\abs{v}$, so that $\frac{c}{R} \B \subseteq \B_{\mu_{Q}}$ for some small constant $c>0$ depending only on $n$.
Therefore, we can continue the above chain of inequalities with
\begin{align*}
&\geq C^{-P}
\vol(\B_{\mu_{Q}})^{1-P} \norm{
\BL( \widevec{T_{x_{j}}H_{j}}, \vec{p}, (1/R,1) )^{-1/P}
}_{\Fotimes_{j=1}^{m} L^{P/p_{1}}_{x_{j}}(H_{j} \cap Q)}^{P}
& \text{by Lemma~\ref{lem:BL-est}}\\
&= C^{-P}
\vol(\B_{\mu_{Q}})^{1-P} R^{n-\sum_{j=1}^{m} n_{j}p_{j}}
G(Q)
&\text{by \eqref{eq:BL-scaling} and \eqref{eq:G}}.
\end{align*}
Using that $k_{j}+n_{j}=n$, we can rearrange this inequality as
\[
C^{P} \prod_{j=1}^{m} S_{j}(Q)^{p_{j}}
\geq
(R^{n} \vol(\B_{\mu_{Q}}))^{1-P} G(Q).
\]
By \eqref{eq:Vis>M:scaled}, we have
\[
R^{n} \vol(\B_{\mu_{Q}})
=
R^{n} \vol(\B_{\s_{p,Q}})
\leq
C M(Q)^{-1},
\]
assuming that $M(Q)>0$.
Since $P \geq 1$ by the hypothesis \eqref{eq:P}, the last two displays imply \eqref{eq:tensor:prod}.
\end{proof}

\printbibliography
\end{document}